\providecommand\@dotsep{5}
\def\listtodoname{List of Todos}
\def\listoftodos{\@starttoc{tdo}\listtodoname}
\numberwithin{equation}{section}
\newtheorem{theorem}{Theorem}[section]
\newtheorem{proposition}[theorem]{Proposition}
\newtheorem{lemma}[theorem]{Lemma}
\newtheorem{corollary}[theorem]{Corollary}
\newtheorem{claim}[theorem]{Claim}
\newtheorem{remark}{Remark}
\newtheorem{definition}[theorem]{Definition}
\newcommand\R{\mathbb R}
\begin{document}
	
	\title[Existence of a positive solution for a class Schr\"odinger logarithmic equations]
	{Existence of a positive solution for a class of Schr\"odinger logarithmic equations on exterior domains}

	\author{Claudianor O. Alves}
	\author{Ismael S. da Silva}

	\address[Claudianor O. Alves]{\newline\indent Unidade Acad\^emica de Matem\'atica
		\newline\indent 
		Universidade Federal de Campina Grande,
		\newline\indent
		58429-970, Campina Grande - PB - Brazil}
	\email{\href{mailto:coalves@mat.ufcg.edu.br}{coalves@mat.ufcg.edu.br}}
	
	\address[Ismael S. da Silva]
	{\newline\indent Unidade Acad\^emica de Matem\'atica
		\newline\indent 
		Universidade Federal de Campina Grande,
		\newline\indent
		58429-970, Campina Grande - PB - Brazil}
	\email{\href{ismaels@servidor.uepb.edu.br}{ismaels@servidor.uepb.edu.br}}

	\pretolerance10000
	
	
	\begin{abstract}
		\noindent In this paper we will prove the existence of a positive solution for a class of Schr\"odinger logarithmic equation of the form
		\begin{equation}
		\left\{\begin{aligned}
		-\Delta u &+ u =Q(x)u\log u^2,\;\;\mbox{in}\;\;\Omega,\nonumber \\
		&\mathcal{B}u=0 \,\,\, \mbox{on} \,\,\,  \partial \Omega ,
		\end{aligned}
		\right.
		\end{equation} 
		where $\Omega \subset \mathbb{R}^N$, $N \geq 3$, is an \textit{exterior domain}, i.e., $\Omega^c=\mathbb{R}^N \setminus \Omega$ is a bounded smooth domain where $\mathcal{B}u=u$ or $\mathcal{B}u=\frac{\partial u}{\partial \nu}$. We have used new approach that allows us to apply the usual $C^1$-variational methods to get a nontrivial solutions for these classes of problems.    	
	\end{abstract}

	\subjclass[2019]{Primary:35J15, 35J20; Secondary: 26A27} 
	\keywords{Schr\"odinger logarithmic equation, exterior domain, variational methods}

	\maketitle

	\section{Introduction}
	Several works in the theory of the Partial Differential Equations concern the equation
	$$-\Delta u + V(x)u= f(u),\,\,\,\text{in}\,\,\,\Omega, \leqno{(E_1)}$$
	where $V:\mathbb{R}^N\longrightarrow \mathbb{R}$ and $f:\mathbb{R} \to \mathbb{R}$ are continuous functions satisfying some technical conditions with $\Omega\subset \mathbb{R}^N$ being an open set. One of the main reason for this interest is the fact that $(E_1)$ appears in many important physical problems. For example, the study of the solutions of $(E_1)$ plays an important role in the study of \textit{standing wave solutions} for the well known nonlinear Schr\"odinger equation	
	$$
	i\varepsilon\frac{\partial \psi}{\partial t}=-\varepsilon^2 \Delta \psi +(V(x)+K)\psi-f(\psi), \,\,\,\text{in}\,\,\,\Omega. 
	$$
	Of the mathematical point of view, depending of the conditions on $V$, $f$ or $\Omega$, the existence of solutions for $(E_1)$ may be not a simple question. Indeed, if we intend, for example, to apply variational methods in order to obtain solutions of $(E_1)$ when $\Omega$ is an unbounded domain, we need to deal with the lack of compactness of the embedding below
	$$H_0^1(\Omega)\hookrightarrow L^p(\Omega),\,\,\,p\in (2,2^*).$$
		
In 1987, Benci and Cerami \cite{Benci-Cerami} studied the existence of positive solution for the following class of elliptic problems 
	$$\left\{\begin{aligned}
	-&\Delta u + \lambda u= |u|^{p-2}u,\,\,\,\text{in}\,\,\,\Omega,\\
	&u\in H_0^1(\Omega),
	\end{aligned}
	\right.
	\leqno{(S_1)}
	$$
	where $\Omega \subset \mathbb{R}^N$, $N \geq 3$, is an \textit{exterior domain}, i.e., $\Omega^c=\mathbb{R}^N \setminus \Omega$ is a bounded smooth domain. In that paper, the authors showed that $(S_1)$ has at least one positive solution when $\mathbb{R}^N \setminus \Omega  \subset B_\rho(0)$, for $\rho\approx 0^+$ or $\lambda \approx 0^+$. In the study developed in \cite{Benci-Cerami}, it was proved that the problem $(S_1)$ has no \textit{ground state solution}, however by establishing a carefully study of the Palais-Smale sequences (shortly $(PS)$ sequences) of the energy functional associated with $(S_1)$, it was showed that the $(PS)_c$ condition holds for some level (see \cite[Section 3]{Benci-Cerami}), which permitted to establish the existence of solution with high energy. 
	
	Posteriorly, many authors studied the existence of solution for other classes of elliptic problems in exterior domains. Here, we cite, e.g., the works of Bahri and Lions \cite{Bahri-Lions} and Li and Zheng \cite{Li-Zheng}. In those papers, the authors showed the existence of solution for elliptic problems of the type  
	$$\left\{\begin{aligned}
	-&\Delta u + \lambda u= A(x,u),\,\,\,\text{in}\,\,\,\Omega,\\
	&u\in H_0^1(\Omega),
	\end{aligned}
	\right.
	\leqno{(S_2)}
	$$
where $\Omega$ is an exterior domain. In the aforementioned papers, $A$ is a continuous function with subcritical growth satisfying
	$$\lim_{|x| \rightarrow \infty}A(x,t)= A_\infty(t),\,\,\,\forall t \in \mathbb{R}.$$
	It is very important to mention that the uniqueness, up to translations, of positive solutions for the limit problem associated with $(S_2)$, that is,
	$$\left\{\begin{aligned}
	-&\Delta u + \lambda u= A_\infty(u),\,\,\,\text{in}\,\,\,\mathbb{R}^N\\
	&u\in H^1(\mathbb{R}^N),
	\end{aligned}
	\right.
	\leqno{(S_3)}
	$$
	plays an important role in order to find solutions of $(S_2)$.  However, in Alves and de Freitas \cite{Alves- de Freitas}, the authors showed the existence of a positive solution for the following class of problem in an exterior domain
	$$
	\left\{\begin{aligned}
	-&\Delta u + u= u^q +\varepsilon u^{2^*-1},\,\,\,\text{in}\,\,\,\Omega,\\
	&u>0,\,\,\,\text{in}\,\,\,\Omega,\\
	&u\in H_0^1(\Omega).
	\end{aligned}
	\right.
	\leqno{(S_4)}
	$$
	In that case, the limit problem given by 
		$$
	\left\{\begin{aligned}
		-&\Delta u + u= u^q +\varepsilon u^{2^*-1},\,\,\,\text{in}\,\,\, \mathbb{R}^N,\\
		&u>0,\,\,\,\text{in}\,\,\, \mathbb{R}^N,\\
		&u\in H^1(\mathbb{R}^N),
	\end{aligned}
	\right.
	\leqno{(S_5)}
	$$
	does not have uniqueness, up to translations, which brings a lot of difficulties to establish the existence of solution for $(S_4)$. However, by doing some estimates related with the energy functional associated to the problem $(S_4)$, as $\varepsilon$ goes to $0$, the authors were able to adapt some ideas in \cite{Benci-Cerami} and they showed the existence of positive solution for $(S_4)$, with $\varepsilon \approx 0^+$. We also refer to the reader the works \cite{Alves-Carriao-Medeiros, Alves-Ambrosio-Torres, Alves-Bisci-Torres, Alves-Torres, Bernardini-Cesaroni,Cao, Esteban} as an interesting complementary bibliography about problems in exterior domains. In these works, the reader will find varied results on the problem $(S_2)$ involving different boundary conditions or driven by quasilinear operators. 
	
	Elliptic problem in exterior domains were also considered with Neumann boundary condition. In this line, the first paper that we would like to cite is due to Esteban \cite{Esteban}, where it was  proved that the problem $(S_1)$ with the Neumann condition has a ground state solution. In \cite{Cao}, Cao considered the following class of problem in exterior domains 
	$$\left\{\begin{aligned}
	-&\Delta u + u= Q(x)|u|^{p-2}u,\,\,\,\text{in}\,\,\,\Omega,\\
	&\frac {\partial u}{\partial \eta}=0, \,\,\,\text{in}\,\,\, \partial \Omega,
	\end{aligned}
	\right.
	\leqno{(S_6)}
	$$
	with $Q\in C(\mathbb{R}^N,\mathbb{R})$ satisfying
	\begin{center}
		 $Q(x)\geq C_0-Ce^{-\sigma|x|}|x|^{-r}$, $Q(x) \rightarrow C_0>0$, as $|x|\rightarrow \infty$,
	\end{center}
	for some $\sigma$, $s>N-1$ and $C>0$. In that paper, it was proved that the problem $(S_6)$ has a ground state solution. Moreover, by considering an additional condition on $Q$, the author was also able to show the existence of nodal solution for $(S_6)$. In \cite{Alves-Carriao-Medeiros}, Alves, Carri\~ao and Medeiros generalized the results found in \cite{Cao} for the $p$-Laplacian operator. 
	
 Later, in 2020, Alves and Torres Ledesma  \cite{Alves-Torres} studied the problem
	$$\left\{\begin{aligned}
	(-&\Delta)^s u + u= Q(x)|u|^{p-2}u,\,\,\,\text{in}\,\,\,\Omega,\\
	& \mathcal{N}_s u=0, \,\,\,\text{in}\,\,\, \Omega^c,
	\end{aligned}
	\right.
	\leqno{(S_7)}
	$$
	where $\Omega$ is an exterior domain, $(-\Delta)^s$ is the fractional Laplacian operator and $\mathcal{N}_s$is the nonlocal operator related with the Neumann condition. The authors established the existence of ground state solution for $(S_7)$ when $Q$ is a continuous function satisfying 
	\begin{center}
	$Q(x)\geq C_0>0$, $Q(x) \rightarrow C_0$, as $|x|\rightarrow \infty$.
	\end{center} 
	The existence of nodal solution was also investigated in that paper. 
	
	Still related with the aforementioned equation $(E_1)$, several recent works have studied the existence of solution for the equation $(E_1)$  for the case $f(t)=t\log t^2$, that is,
	$$ -\Delta u + V(x)u = u\log u^2,\,\,\,\text{in}\,\,\,\mathcal{O}, \leqno{(E_2)}$$
	where $\mathcal{O} \subset \mathbb{R}^N$ is a domain not necessarily bounded. The above equation has received a special attention, because it is related with relevant physical questions, such as   quantum mechanics, quantum optics, effective quantum gravity, Bose-Einstein condensation and others interesting physical topics , see \cite{Zloshchastiev} for more details. 
	
	Besides of the importance for the mentioned physical subjects, the existence of solution for $(E_2)$ is not simple when $\mathcal{O}$ is unbounded,  because it involves nontrivial questions in the mathematical point of view. For example, if we intend to apply variational methods to get solutions of $(E_2)$, the standard possibility for the energy functional associated to $(E_2)$ is the functional given by
	$$J(u)=\frac{1}{2}\int_{\mathcal{O}}(|\nabla u|^2+(V(\varepsilon x))|u|^2+1)dx-\frac{1}{2}\int_{\mathcal{O}}u^2 \log |u|^2dx,\,\,\,u \in H_0^1(\mathcal{O}) \,(\text{or}\,\,\,u\in H^1(\mathcal{O})),$$ 
	(We have used that $\displaystyle{\int_{0}^{t}s\log s^2 \,ds}=\frac{1}{2}t^2 \log t^2-\frac{t^2}{2}$). However, when $\mathcal{O}$ is an unbounded domain, we cannot ensure that  $J(u) \in \mathbb{R}$  for all $ u \in H^1(\mathcal{O})$. For example, if $\mathcal{O}=\mathbb{R}^N$,  we take $u_*$ a smooth function satisfying
	$$u_*(x)= \left\{\begin{aligned}
	&(|x|^{N/2}\log |x|)^{-1},\,\,\,&|x|\geq 3;\\
	&0,\,\,\,&|x|\leq 2,
	\end{aligned}
	\right.    
	$$
which satisfies $u_*\in H^1(\mathbb{R}^N)$, but $\displaystyle{\int_{\mathbb{R}^N}}|u_*|^2 \log |u_*|^2dx= -\infty$, and so, $J(u_*)=+\infty$. 
	
Some different approaches have been explored in a lot of recent papers to overcome these difficulties inherent to the study of the equations of $(E_2)$-type. In \cite{Cazenave},  Cazenave studied a logarithmic Schr\"odinger equation, namely
	$$\leqno{(E_3)}\hspace{4 cm}iu_t+\Delta u + u\log u^2=0,\,\,\,\,x \in \mathbb{R}\times\mathbb{R}^N,$$
	by working on the space 
	$$
	W:=\left\{u \in H^1(\mathbb{R}^N);\,\displaystyle{\int_{\mathbb{R}^N}}\mid u^2\log u^2\mid dx<\infty\right\}
	$$ 
	and considering a different topology of the usual topology of $H^1(\mathbb{R}^N)$. In that paper, the author provided a lot of information about the solutions of $(E_3)$ that are the form $\varphi(t,x):=e^{-iHt}\phi(x)$, where $H$ is a constant and $\phi$ verifies the problem
	$$
	-\Delta \phi -H\phi=\phi \log \phi^2 \quad \mbox{in} \quad \mathbb{R}^N.
	$$  
	
	Later on, several works have considered the problem of the existence of solution for the following class of problems
	$$\left\{\begin{aligned}
	-\epsilon^2 \Delta u &+ V(x)u  =u\log u^2,\;\;\mbox{in}\;\;\mathbb{R}^{N},\nonumber \\
	&u \in H^{1}(\mathbb{R}^{N}),
	\end{aligned}
	\right.\leqno{(S_5)}
	$$
where $\epsilon>0$ and $V: \mathbb{R}^N \to \mathbb{R}$ is a potential satisfying some technical conditions, see for example \cite{Alves-de Morais, Alves-Ji, Alves-Ji2, Alves-Ji3, d'Avenia, Ji-Szulkin, Squassina-Szulkin} and references therein. In these references, we find some results about the existence, concentration and multiplicity of solutions of $(S_5)$. In general, in the above mentioned papers, the authors considered a decomposition of the logarithmic term of the form
	\begin{equation}\label{052}
	F_2(t)-F_1(t)=\frac{1}{2}t^2 \log t^2\,\,\,\, \forall t \in \mathbb{R},
	\end{equation}
	where $F_1$, $F_2 \in C^1(\mathbb{R})$, $F_1$ is a convex function, and $F_2$ is a function with a subcritical growth (see Subsection 3 below). This little trick allow us to decompose the functional $J$ above as a sum of a $C^1$-functional with a lower semicontinuous and convex functional, which is a key point to apply the nonsmooth Critical Point Theory introduced by Szulkin \cite{Szulkin}. However, due to the lack of smoothness, some questions related with the existence of critical point on $C^1$-manifolds cannot be attacked with this approach. For example, we could not reproduce directly the reasoning presented in \cite{Benci-Cerami} to study the existence of solution for a problem in exterior domain involving logarithmic nonlinearities, because the approach used in \cite{Benci-Cerami} deals with the notion of critical points on the \textit{Nehari set} that is a $C^1$-manifold.
	
	Motivated by the aforementioned questions related with the logarithmic equations and elliptic problems in exterior domains, in the present paper our main goal is to study the following class of problems
	\begin{equation}
	\left\{\begin{aligned}
	-\Delta u &+ u =Q(x)u\log u^2,\;\;\mbox{in}\;\;\Omega,\nonumber \\
	&\mathcal{B}u=0 \,\,\, \mbox{on} \,\,\,  \partial \Omega,
	\end{aligned}
	\right.
	\end{equation}
where $\Omega\subset \mathbb{R}^N$ ($N\geq 3$) is an exterior domain, $Q\in C(\mathbb{R}^N,\mathbb{R})$ and $\mathcal{B}$ satisfying $\mathcal{B}u=u$ or $\mathcal{B}u=\frac{\partial u}{\partial \nu}$. 
Firstly, we consider $Q\equiv 1$ with the \textit{Dirichlet condition} on $\partial \Omega$, i.e., $\mathcal{B}u=u$. For this case, our problem can be rewritten as
	$$
	\left\{\begin{aligned}
	-\Delta u &+ u =u\log u^2,\;\;\mbox{in}\;\;\Omega,\nonumber \\
	&u \in H_0^{1}(\Omega). 
	\end{aligned}
	\right.\leqno{(P_0)} 
	$$

	In order to prove our results, we will follow the approach introduced by Alves and da Silva \cite{Alves-S da Silva} that permits to adapt some ideas explored in \cite{Alves- de Freitas, Benci-Cerami}.  In \cite{Alves-S da Silva},   the authors proved that the function $F_1$ in the decomposition given in (\ref{052}) is a $N$-function, and then they have used this information to introduce a new reflexive and separable Banach space on which the energy functional associated with $(P_0)$ is a $C^1$-functional.
	
	The main result associated with $(P_0)$ is the following: 
	\begin{theorem}\label{053}
		There exists $\rho_0\approx 0^+$ such that, if $\Omega^c \subset B_\rho(0)$, then the problem $(P_0)$ has a positive solution for each $\rho \in (0,\rho_0)$.
	\end{theorem}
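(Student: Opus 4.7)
The plan is to adapt the Benci-Cerami strategy \cite{Benci-Cerami} and its refinement in \cite{Alves- de Freitas} to the logarithmic setting, combined with the $C^1$-variational framework of Alves and da Silva \cite{Alves-S da Silva}. First, I would set up the energy functional
$$I_\Omega(u) = \frac{1}{2}\int_\Omega(|\nabla u|^2+u^2)\,dx + \int_\Omega F_1(u)\,dx - \int_\Omega F_2(u)\,dx$$
on the reflexive, separable Banach space of \cite{Alves-S da Silva}, where $F_1$ is a convex $N$-function, $F_2$ has subcritical growth, and $F_2-F_1=\tfrac12 t^2\log t^2$; on this space $I_\Omega$ is $C^1$ and enjoys mountain pass geometry. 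Let $I_\infty$, $\mathcal{N}_\infty$, and $m_\infty$ denote the analogous functional, Nehari manifold, and least energy on $\mathbb{R}^N$, with $m_\infty$ attained by the unique positive radial ground state $w$ of $-\Delta w+w=w\log w^2$ in $\mathbb{R}^N$.

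The argument then divides into two main steps. For the compactness analysis, I would prove a splitting lemma of Lions type for Palais-Smale sequences of $I_\Omega$: any $(PS)_c$ sequence either converges strongly or decomposes, up to a subsequence, as a weak limit in $\Omega$ plus finitely many translated copies $w(\cdot-y_n^k)$ with $|y_n^k|\to\infty$, whose energies add up to $c$. Together with the nonexistence of a ground state for $(P_0)$ (proved as in \cite{Benci-Cerami} by translating a minimizing sequence on $\mathcal{N}_\Omega$ far from $\Omega^c$), this gives $\inf_{\mathcal{N}_\Omega}I_\Omega = m_\infty$ without attainment, and forces $(PS)_c$ to hold for every $c\in(m_\infty,2m_\infty)$. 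For the level estimate, I would produce a critical value $c_\rho\in(m_\infty,2m_\infty)$ for all small $\rho$ via a linking/barycenter argument in the spirit of \cite{Benci-Cerami,Alves- de Freitas}. Using that the sublevel $\{I_\infty\le m_\infty+\varepsilon\}\cap \mathcal{N}_\infty$ retracts onto a copy of $S^{N-1}$, I would map $S^{N-1}$ into $\mathcal{N}_\Omega$ by translating $w$ to points on a large sphere surrounding $\Omega^c\subset B_\rho(0)$, and show via a barycenter invariant and sharp energy estimates that exploit the Gaussian decay of $w$ that the associated minimax value is strictly between $m_\infty$ and $2m_\infty$ when $\rho$ is small. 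Positivity of the resulting critical point follows because $I_\Omega(|u|)=I_\Omega(u)$ by the oddness of the logarithmic term, combined with a strong-maximum-principle argument adapted to the logarithmic right-hand side.

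The principal obstacle will be executing the splitting analysis and the associated boundedness of Palais-Smale sequences in the Orlicz-type framework of \cite{Alves-S da Silva}. Because the logarithmic nonlinearity satisfies no Ambrosetti-Rabinowitz condition and the convex term $\int F_1(u)$ introduces a lower-order contribution that is not controlled by the standard $H^1$-norm, I would bound $(u_n)$ by combining the identity $c+o_n(1)=I_\Omega(u_n)-\tfrac12\langle I_\Omega'(u_n),u_n\rangle$ (which yields control of $\int u_n^2$) with the coercivity of the $N$-function $F_1$ to recover control in the full norm of the ambient Banach space. Once this is in place, a Brezis-Lieb-type decomposition for $I_\Omega$ together with the translation invariance of $I_\infty$ will yield the splitting. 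A secondary technical point is the sharp estimate $c_\rho<2m_\infty$: this hinges on using the explicit Gaussian form of $w$ to control the interaction energy of two well-separated translates placed on opposite sides of the small hole $\Omega^c$.
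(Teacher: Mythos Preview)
Your plan follows the paper's approach closely: the $C^1$ framework of \cite{Alves-S da Silva}, the identity $d_0=d_\infty$ together with non\-existence of a ground state, a Lions-type splitting lemma for $(PS)$ sequences, and a barycenter/linking argument \`a la Benci--Cerami to produce a critical value strictly between $d_\infty$ and $2d_\infty$. Two points deserve correction. First, your description of the upper bound $c_\rho<2d_\infty$ is off: there is no ``interaction energy of two well-separated translates'' here. The test family consists of \emph{single} cut-off translates $\phi_\rho(y)=t_{y,\rho}\,\varphi(|x|/\rho)\,u_\infty(x-y)$, and the estimate is simply that $\sup_{y}I(\phi_\rho(y))\to d_\infty$ as $\rho\to 0$, because the cutoff modifies $u_\infty(\cdot-y)$ only on $B_{2\rho}(0)$; boundedness of $u_\infty$ (not its Gaussian decay) suffices, and no two-bump interaction enters. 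If you genuinely tried to build the minimax class from sums of two translates you would be solving a different topological problem. Second, for boundedness of $(PS)$ sequences the identity $c+o_n(1)=I(u_n)-\tfrac12 I'(u_n)u_n=\tfrac12\|u_n\|_2^2$ gives only an $L^2$ bound; the paper then invokes the logarithmic Sobolev inequality to pass to an $H^1$ bound, after which the $L^{F_1}$ bound follows from the functional identity. ``Coercivity of $F_1$'' alone does not close this gap, so you should plan to use the log-Sobolev inequality explicitly.
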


	In our second main result, we investigate the existence of ground state solution when \linebreak $\mathcal{B}u:=\dfrac{\partial u}{\partial \eta}$ (\textit{Neumann condition}), more precisely, we will consider the following class of problem  
	$$
	\left\{\begin{aligned}
		-&\Delta u + u= Q(x)u\log u^2,\,\,\,\text{in}\,\,\,\Omega\\
		&\frac{\partial u}{\partial \eta}=0,\,\,\,\text{on}\,\,\,\partial \Omega,
	\end{aligned}
	\right.
	\leqno{(S_0)}
	$$
where $\Omega$ is an exterior domain. On the function $Q$, we assume the following conditions:
	\begin{itemize}
		\item [$(Q_1)$] $\displaystyle{\lim_{|x|\rightarrow \infty}} Q(x) = Q_0$ and  $q_0:=\displaystyle{\inf_{x\in \mathbb{R}^N}}\,Q(x)>0$ for all $x\in \mathbb{R}^N$;\\
		\item [$(Q_2)$] $Q_0\geq Q(x)\geq Q_0-Ce^{-M_0|x|^2}$, for $x\geq R_0$,\\
	\end{itemize}
	with $Q_0, C, M_0$, $R_0>0$.

	Our main second result is the following 
	\begin{theorem}\label{068}
		If the conditions $(Q_1)-(Q_2)$ hold, then for some $M_0$ large enough, the problem $(S_0)$ has a positive ground state solution.
	\end{theorem}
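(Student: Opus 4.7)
The plan is to adapt the Nehari-manifold approach within the $C^1$-variational framework of Alves and da Silva. Using the decomposition (\ref{052}) with $F_1$ an $N$-function, the energy functional associated with $(S_0)$ reads
\begin{equation*}
I_Q(u) = \tfrac{1}{2}\int_\Omega\bigl(|\nabla u|^2+u^2+Q(x)u^2\bigr)\,dx + \int_\Omega Q(x)F_1(u)\,dx - \int_\Omega Q(x)F_2(u)\,dx,
\end{equation*}
and is $C^1$ on the reflexive Banach space built in \cite{Alves-S da Silva}. A direct computation yields $\langle I_Q'(u),u\rangle = \int_\Omega(|\nabla u|^2+u^2-Q(x)u^2\log u^2)\,dx$, so each $u\neq 0$ admits a unique Nehari projection $t(u)u\in\mathcal N_Q$, determined explicitly by $\log t(u)^2 = \bigl(\int(|\nabla u|^2+u^2)-\int Qu^2\log u^2\bigr)/\int Q u^2$, and on $\mathcal N_Q$ the energy reduces to $I_Q(u)=\tfrac{1}{2}\int_\Omega Q(x)u^2\,dx$. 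I would set $c:=\inf_{\mathcal N_Q}I_Q$ and aim to show this infimum is attained by a positive function.

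The key step is to prove the strict gap $c<c_\infty$, where $c_\infty=\tfrac{Q_0}{2}\|w\|_{L^2}^2$ is the ground-state level of the autonomous limit problem $-\Delta w+w=Q_0 w\log w^2$ in $\mathbb R^N$ (whose positive radial ground state $w$ is Gaussian-like and decays exponentially). With the test function $u_y := t(y)\,w(\cdot-y)\bigr|_\Omega\in\mathcal N_Q$ for $|y|$ large, the Nehari identity together with $w\in\mathcal N_\infty$ yields
\begin{equation*}
\log t(y)^2 = \frac{\int_\Omega \bigl(Q_0-Q(x)\bigr)w(x-y)^2\log w(x-y)^2\,dx + R(y)}{\int_\Omega Q(x)w(x-y)^2\,dx},
\end{equation*}
where $R(y)$ collects the boundary remainder lost by restricting to $\Omega$ and decays exponentially in $|y|$. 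Hypothesis $(Q_2)$ enforces $Q_0-Q(x)\le Ce^{-M_0|x|^2}$, so dominated convergence gives $t(y)\to 1$. Since $I_Q(u_y)=\tfrac{t(y)^2}{2}\int_\Omega Q(x)w(x-y)^2\,dx$, one expands
\begin{equation*}
I_Q(u_y)-c_\infty = -\tfrac{t(y)^2}{2}\int_\Omega(Q_0-Q(x))w(x-y)^2\,dx + \tfrac{(t(y)^2-1)Q_0}{2}\|w\|_{L^2}^2 + o(1).
\end{equation*}
For $|y|$ large, $w(x-y)^2<1$ on the essential support of $Q_0-Q$, so the numerator of $\log t(y)^2$ is strictly negative, forcing $t(y)^2-1\le 0$; both displayed terms then contribute negatively. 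Choosing $M_0$ large is precisely what quantifies the subleading pieces and delivers $c<c_\infty$.

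With the gap established, Ekeland's variational principle on $\mathcal N_Q$ produces a bounded Palais--Smale sequence $(u_n)$ for $I_Q$ at level $c$. A concentration-compactness splitting lemma of Brezis--Lieb/Lions type adapted to the Orlicz $F_1$-setting (analogous to the one developed in \cite{Alves-S da Silva} for the Dirichlet problem $(P_0)$) decomposes $u_n$ as a weak limit $u_0$ plus translated copies of solutions of the limit equation on $\mathbb R^N$. If $u_0\equiv 0$, the full mass would escape to infinity, producing total energy at least $c_\infty$ and contradicting $c<c_\infty$. Hence $u_0\not\equiv 0$ is a critical point with $I_Q(u_0)=c$, i.e.\ a ground state of $(S_0)$. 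Positivity follows by replacing $u_0$ with $|u_0|$ (since $F_1$ and $F_2$ are even, $I_Q(|u|)=I_Q(u)$) and applying the strong maximum principle together with the Hopf boundary lemma under the Neumann condition. The principal obstacle is the strict inequality $c<c_\infty$: both the ``gain'' and the Nehari correction vanish as $|y|\to\infty$ with comparable rates, and the Gaussian smallness of $Q_0-Q$ required by $(Q_2)$ with $M_0$ large is exactly what guarantees that the main term dominates.
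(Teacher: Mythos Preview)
Your overall strategy is the paper's: prove the strict gap $l_0<l_\infty$ with translated limit ground states, then use a splitting lemma to force compactness below $l_\infty$. The paper runs a Cerami sequence obtained from the Mountain Pass theorem (Corollary~\ref{063}) rather than a Palais--Smale sequence from Ekeland on the Nehari set, but since the mountain-pass and Nehari levels coincide this is immaterial.

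Where your argument is genuinely loose is the strict-gap computation. In your expansion of $I_Q(u_y)-c_\infty$ you display two terms and an ``$o(1)$'', but that $o(1)$ hides $-\tfrac{Q_0t(y)^2}{2}\int_{\Omega^c}w(x-y)^2$, which is precisely the quantity the paper isolates as the dominant negative piece (there $A_n=\int_{\Omega^c}(|\nabla v_n|^2+v_n^2)$). More seriously, your claim that ``$w(x-y)^2<1$ on the essential support of $Q_0-Q$'' fails as written: under $(Q_2)$ the set $\{Q_0>Q\}$ may be all of $\mathbb R^N$, and near $x=y$ the explicit Gaussian ground state satisfies $w(0)=e^{N/2+1/(2Q_0)}>1$, so the integrand $(Q_0-Q)\,w^2\log w^2$ is \emph{positive} there. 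That positive contribution is of order $e^{-M_0|y|^2}$ and competes against the negative $\Omega^c$-term of order $e^{-2C_2|y|^2}$; the comparison of these two rates is exactly where ``$M_0$ large'' enters, but your displayed calculation never carries it out. The paper's proof of Lemma~\ref{065} organizes this transparently: it keeps $A_n$ as the main negative term, collects every possibly-positive error into $B_n$, and proves $B_n/A_n\to 0$ via an explicit splitting $\Omega=\{|x|\le n/k_0\}\cup\{|x|>n/k_0\}$ with separate Gaussian estimates on each piece, which makes the threshold on $M_0$ concrete rather than heuristic.
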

	
	We would like to emphasize that associated with problem $(P_0)$, in the same line of \cite{Alves- de Freitas,Benci-Cerami}, we have proved a result of nonexistence of ground state solution for the problem $(P_0)$, for more details see Theorem \ref{034}.
	
	Since the main results above ensure the existence of a positive solution for the problems $(P_0)$ and $(S_0)$ respectively, the present paper complements the study made for the equation $(E_1)$, because we are studying the existence of solution for a class of elliptic problems in exterior domains with logarithmic nonlinearity. 
	
	Before finishing this section, we would like to point out some contributions that were obtained as byproduct of our results:
	\begin{enumerate}
		\item [$i)$:] By using the approach introduced in \cite{Alves-S da Silva}, we have showed that the Nehari set $\mathcal{N}$ related with the energy functional $I$ associated with $(P_0)$ is a $C^1$-manifold, which permits to prove the existence of critical points for $I|_{\mathcal{N}}$. Here, it is proved that these critical points are in fact critical points of $I$ in the usual sense (see Proposition \ref{07} );
		\item [$ii)$:] In the study of problem $(P_0)$, we established a compactness lemma that gives the same type of information of \cite[Lemma 3.1]{Benci-Cerami} (see the Lemma \ref{031} ). Our result allowed us to do a precise description of the behavior of $(PS)$ sequences of $I|_{\mathcal{N}}$. 
		\item [$iii)$:] In the second problem, more precisely the problem $(S_0)$, we have showed that a similar compactness lemma holds for the Neumann case by working with the \textit{Cerami sequences} (see the Lemma \ref{067} for more details.)
	\end{enumerate}
	
	The paper is organized as follows: In Section 2, we present a brief review about $N$-functions and Orlicz spaces. In Section 3, we recall the main tools associated with the approach explored in \cite{Alves-S da Silva}, it is proved the nonexistence of ground state solution for $(P_0)$  as well as a compactness lemma for the functional $I|_{\mathcal{N}}$. In Section 4, the main technical results used to obtain the existence of solution of $(P_0)$ are established. Finally, in Section 5-6, we prove the Theorems \ref{053} and \ref{068}. 
	\\ 
	\\
	\noindent \textbf{Notation:} 
	\begin{enumerate}
		\item[$\bullet$] $\|\cdot\|_p$ denotes the usual norm of the Lebesgue space $L^p(\mathbb{R}^N)$, $p \in [1, \infty];$
		\item[$\bullet$] If $f:A \rightarrow \mathbb{R}$ is a measurable function, with $A \subset \mathbb{R}^N$ being a measurable set, then $\displaystyle{\int_{A}} f(x)\,dx$ will be denoted by $\displaystyle{\int_{A}}f\,;$
		\item[$\bullet$] $o_n(1)$ denotes a real sequence with $o_n(1) \rightarrow 0;$
		\item[$\bullet$] $C_{x_1,...,x_n}$ denotes a positive constant which depends of $x_1,...,x_n;$
		\item[$\bullet$] $2^*:= \displaystyle\frac{2N}{N-2}$, if $N\geq 3$ and $2^*:=\infty$ if $N=1$ or $N=2$.	
	\end{enumerate}

	\section{A short review about Orlicz spaces}
	
	In this section, we present some notions and properties related to the Orlicz spaces, for more details see \cite{Adams1, Fukagai 1, RAO}. 
	
	\begin{definition}\label{N}
		A continuous function $\Phi: \mathbb{R}\rightarrow [0, +\infty)$ is a $N$-function if:
		
		\item [(i)] $\Phi$ is convex.
		\item [(ii)] $\Phi(t)=0\Leftrightarrow t=0$.
		\item [(iii)] $\displaystyle \lim_{t\rightarrow 0} \frac{\Phi(t)}{t}=0$ and  $\displaystyle\lim_{t\rightarrow \infty} \frac{\Phi(t)}{t}=+\infty$.
		\item [(iv)] $\Phi$ is an even function.
		
	\end{definition}
	We say that a $N$-function $\Phi$ verifies the $\Delta_{2}$-condition, denoted by $\Phi \in (\Delta_{2})$, if
	\begin{equation*}
	\Phi(2t) \leq k\Phi(t)\;\;\forall\; t\geq t_0,
	\end{equation*}
	for some constants $k>0$ and $t_0 \geq 0$. 
	
	The conjugate function $\tilde{\Phi}$ associated with $\Phi$ is given by the Legendre's transformation, more precisely,
	$$\tilde{\Phi}= \max_{t \geq 0} \{st-\Phi(t)\}\;\;\mbox{for}\;\; s \geq 0.$$
	It is possible to prove that $\tilde{\Phi}$ is also a N-function. The functions $\Phi$ and $\tilde{\Phi}$ are complementary to each other, that is, $\tilde{\tilde{\Phi}}=\Phi$.
	
	Given an open set $A \subset \mathbb{R}^N$, we define the Orlicz space associated with the $N$-function $\Phi$ as
	$$L^{\Phi}(A) = \left\{u \in L^{1}_{loc}(A) \; ; \; \int_{A} \Phi\left(\dfrac{|u|}{\lambda}\right) < +\infty, \quad \text{for some} \, \, \lambda >0 \right\}.$$
	The space $L^{\Phi}(A)$ is a Banach space endowed with Luxemburg norm given by
	$$
	||u||_{\Phi} = \inf \left\{\lambda > 0 \; ; \; \int_{A}\Phi\left(\dfrac{|u|}{\lambda}\right) \le 1 \right\}.
	$$
	
	We would like to point out that in Orlicz spaces we also have a H\"older and Young type inequalities, namely
	$$st\leq \Phi(t)+\tilde{\Phi}(s),\,\,\,\forall s,t\geq 0,$$
	and
	$$
	\left\lvert\int_{A} u v \right\lvert \le 2||u||_{\Phi}||v||_{\tilde{\Phi}},\;\;\forall\; u  \in L^{\Phi}(A) \quad \mbox{and} \quad u  \in L^{\tilde{\Phi}}(A).
	$$
	Moreover, for each $\varepsilon>0$, it holds
	\begin{equation}\label{DY}	
	st\leq \Phi(C_\varepsilon t)+\varepsilon\tilde{\Phi}(s),\,\,\,\forall s,t\geq 0,
	\end{equation}
	for some positive $C_\varepsilon>0$.	
	When $\Phi$, $\tilde{\Phi}\in (\Delta_2)$,  the space $L^{\Phi}(A)$ is reflexive and separable. Furthermore, the $\Delta_{2}$-condition yields that
	$$
	L^{\Phi}(A)=\left\{u \in L^{1}_{loc}(A) \; ; \; \int_{A} \Phi\left(|u|\right) < +\infty \right\}
	$$
	and
	$$
	u_{n}\rightarrow u \;\;\mbox{in}\;\; L^{\Phi}(A)\Leftrightarrow \int_{A} \Phi(|u_{n}-u|)\rightarrow 0.
	$$
	
	We would like to mention an important relation involving $N$- functions, which will be used later on. Let $\Phi$ be a $N$-function of $C^1$ class and $\tilde{\Phi}$ its conjugate function. Suppose that
	
	\begin{equation}\label{FIN}
	1<l\leq\frac{\Phi'(t)t}{\Phi(t)}\leq m<N,\,\,\,\, t\neq 0,
	\end{equation}
	then $\Phi$, $\tilde{\Phi} \in (\Delta_2)$. 
	
	Finally, setting the functions 
	$$\xi_0(t):=\min\{t^l,\,t^m\}\,\,\, \text{and}\,\,\, \xi_1(t):\max\{t^l,\,t^m\},\,\,\,\,\quad t\geq0,$$
	it is well known that under the condition (\ref{FIN}) one has 
	\begin{equation}\label{In}
	\xi_0(||u||_{\Phi})\leq \int_{A}\Phi(u) \leq \xi_1(||u||_\Phi).
	\end{equation}
	We finish this section by recalling  a Brezis-Lieb type result involving $N$-functions found in \cite[Theorem 2]{BrezisLieb}
	\begin{proposition}[A Brezis-Lieb type result]\label{BL}
		Suppose $\Phi$ is a $N$-function with $\Phi \in (\Delta_2)$. Let $(g_n)$ be a sequence in $L^{\Phi}(A)$ satisfying:
		\begin{itemize}
			\item[$i)$] $(g_n)$ is a bounded sequence in $L^{\Phi}(\Omega)$;
			\item[$ii)$] $g_n(x)\rightarrow 0$ a.e. in $A$. 	
		\end{itemize}	
		Then, for each $w \in L^{\Phi}(A)$, 
		$$\int_{A}|\Phi(g_n+w)-\Phi(g_n)-\Phi(w)|=o_n(1).$$
	\end{proposition}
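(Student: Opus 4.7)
The plan is to reduce the assertion to a dominated convergence argument, preceded by a pointwise inequality that sharpens the usual ``$\Phi(a+b) \leq K(\Phi(a)+\Phi(b))$'' consequence of the $\Delta_2$-condition into a form suitable for splitting off a small multiple of $\Phi(g_n)$.

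The first step I would establish is the following pointwise lemma: for every $\varepsilon>0$ there exists $C_\varepsilon>0$ such that
\begin{equation*}
\bigl|\Phi(s+t)-\Phi(s)-\Phi(t)\bigr|\le \varepsilon\,\Phi(s)+C_\varepsilon\,\Phi(t),\qquad \forall\,s,t\in\mathbb{R}.
\end{equation*}
To prove this, I would split according to $|t|\le\delta|s|$ or $|t|>\delta|s|$ for a small $\delta=\delta(\varepsilon)$. In the first regime, convexity and evenness of $\Phi$ give $|\Phi(s+t)-\Phi(s)|\le\Phi(|s|+|t|)-\Phi(|s|)$, and combining with the $\Delta_2$-condition applied on $[|s|,(1+\delta)|s|]$ controls this by a small fraction of $\Phi(s)$, absorbing the extra $\Phi(t)$ term into $C_\varepsilon\Phi(t)$. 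In the second regime, $\Phi(s)\le\Phi(|t|/\delta)\le K_\delta\Phi(t)$ by iterated $\Delta_2$, so both $\Phi(s+t)$ and $\Phi(s)$ are dominated by $C_\varepsilon\Phi(t)$ directly. This is the main technical step of the argument.

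Armed with this inequality, I would fix $\varepsilon>0$ and introduce
\begin{equation*}
h_{\varepsilon,n}(x):=\max\Bigl\{\bigl|\Phi(g_n(x)+w(x))-\Phi(g_n(x))-\Phi(w(x))\bigr|-\varepsilon\Phi(g_n(x)),\;0\Bigr\}.
\end{equation*}
The pointwise lemma gives $0\le h_{\varepsilon,n}(x)\le C_\varepsilon\Phi(w(x))$, and since $\Phi\in(\Delta_2)$ and $w\in L^\Phi(A)$, the right-hand side lies in $L^1(A)$. Because $g_n\to 0$ a.e.\ and $\Phi$ is continuous with $\Phi(0)=0$, we have $h_{\varepsilon,n}(x)\to 0$ a.e.\ on $A$. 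The dominated convergence theorem then yields $\int_A h_{\varepsilon,n}\to 0$ as $n\to\infty$.

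To finish, I would combine the two bounds: by definition of $h_{\varepsilon,n}$,
\begin{equation*}
\int_A\bigl|\Phi(g_n+w)-\Phi(g_n)-\Phi(w)\bigr|\;\le\;\int_A h_{\varepsilon,n}\;+\;\varepsilon\int_A\Phi(g_n).
\end{equation*}
The boundedness of $(g_n)$ in $L^\Phi(A)$ combined with $\Phi\in(\Delta_2)$ and the equivalence recorded in \eqref{In} (together with the characterization of the norm via modular convergence under $\Delta_2$) gives $\int_A\Phi(g_n)\le M$ uniformly in $n$. Hence $\limsup_n\int_A|\Phi(g_n+w)-\Phi(g_n)-\Phi(w)|\le \varepsilon M$, and letting $\varepsilon\to 0^+$ concludes the proof. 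The only delicate point is the pointwise inequality in the first step; everything afterwards is routine measure theory once the $\Delta_2$-control is in hand.
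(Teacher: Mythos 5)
The paper offers no proof of Proposition \ref{BL} at all: it simply cites \cite[Theorem 2]{BrezisLieb}, so the relevant comparison is with the argument of that cited theorem. What you have written is, in substance, a self-contained reproduction of it specialized to $j=\Phi$: Brezis--Lieb's hypothesis is exactly a pointwise inequality $|j(a+b)-j(a)|\le\varepsilon\varphi_\varepsilon(a)+\psi_\varepsilon(b)$, and their proof is exactly your truncation $h_{\varepsilon,n}=\bigl(|\Phi(g_n+w)-\Phi(g_n)-\Phi(w)|-\varepsilon\Phi(g_n)\bigr)^{+}$ followed by dominated convergence and $\varepsilon\to0^{+}$. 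The added value of your write-up is the first step, which the paper leaves implicit: you verify the abstract hypothesis for an $N$-function with $\varphi_\varepsilon=\Phi$ and $\psi_\varepsilon=C_\varepsilon\Phi$, using convexity plus $\Delta_2$. That verification is sound; in the regime $|t|\le\delta|s|$ the monotonicity-of-increments argument gives $|\Phi(s+t)-\Phi(s)|\le\Phi\bigl((1+\delta)|s|\bigr)-\Phi(|s|)$, and the cleanest way to finish (which you gesture at but do not write) is the convexity interpolation $\Phi\bigl((1+\delta)u\bigr)\le(1-\delta)\Phi(u)+\delta\Phi(2u)\le\bigl(1+\delta(K-1)\bigr)\Phi(u)$, so $\delta=\varepsilon/(K-1)$ works; in the regime $|t|>\delta|s|$, iterated $\Delta_2$ dominates everything by $C_\varepsilon\Phi(t)$, as you say.

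Two caveats. The important one: your splitting applies $\Phi(2u)\le K\Phi(u)$ at arbitrarily small $u$, i.e.\ you use the $\Delta_2$-condition with $t_0=0$, whereas the paper's definition only requires it for $t\ge t_0$ with possibly $t_0>0$. This is not a removable convenience: if $\Delta_2$ holds only for large arguments and $|A|=\infty$, both your pointwise lemma and the proposition itself fail. Indeed, take $\Phi$ even, convex, behaving like $e^{-1/|t|}$ near $0$ and quadratically at infinity; let $(F_k)$ be disjoint sets with $|F_k|=2^{-k}/\Phi(a_k)$, $a_k\to0$, and put $w=\sum_k a_k\chi_{F_k}$, $g_n=a_n\chi_{F_n}$. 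Then $g_n\to0$ everywhere, $\int_A\Phi(g_n)=2^{-n}$, $\int_A\Phi(w)=1$, yet
$$\int_A\bigl|\Phi(g_n+w)-\Phi(g_n)-\Phi(w)\bigr|=2^{-n}\Bigl(\tfrac{\Phi(2a_n)}{\Phi(a_n)}-2\Bigr)\longrightarrow\infty$$
if $a_n\to0$ fast enough, since $\Phi(2a_n)/\Phi(a_n)=e^{1/(2a_n)}$ is unbounded. So you should state explicitly that you assume the doubling inequality at every scale; this is harmless for the paper's application, because $F_1$ satisfies \eqref{2} (Proposition \ref{4}), which forces global $\Delta_2$, but it is a genuine hypothesis rather than a cosmetic one. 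The minor caveat: to bound $\sup_n\int_A\Phi(g_n)$ you invoke \eqref{In}, which the paper states under condition \eqref{FIN}, not under $\Delta_2$ alone; the clean argument is that $\|g_n\|_\Phi\le R$ gives $\int_A\Phi(g_n/R')\le1$ for any $R'>R$ by definition of the Luxemburg norm, and then iterated global $\Delta_2$ yields $\int_A\Phi(g_n)\le K^{k}$ once $2^{k}\ge R'$. With these two repairs your proof is complete and, unlike the paper, does not outsource anything.
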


	\section{The variational framework}
	This section is devoted to show some technical results that will be used later on. We start by recalling an important result involving the uniqueness of solution for the logarithmic equation on the whole $\mathbb{R}^N$. After that, we present the function space that will consider and prove a result of nonexistence of ground state solution for $(P_0)$,  which is an important step in our study. Finally, we prove a compactness lemma that is crucial to understand the behavior  of the $(PS)$ sequences of the energy functional associated with $(P_0)$. 
	
	Our first result in this section can be found in \cite[Section 1]{d'Avenia} (see also \cite{Bialynicki-Birula}) and it concerns with the uniqueness of solution for the following class of problem
	\begin{equation} \label{problemalimite}
	\left\{\begin{aligned}
	-\Delta u &+ \kappa u =u\log u^2,\;\;\mbox{in}\;\;\mathbb{R}^N, \\
	&u \in H^{1}(\mathbb{R}^N),
	\end{aligned}
	\right.
	\end{equation}
	where $\kappa>0$. 
	\begin{theorem}\label{1}
		The problem \eqref{problemalimite} has a unique positive solution $u \in C^2(\mathbb{R},\mathbb{R})$, up to translations, such that $u(x)\rightarrow 0$ as $|x|\rightarrow \infty$. More precisely, the solution $u$ is given by
		$$u(x)=C_{\kappa,N}\,e^{\frac{-|x|^2}{2}}.$$
	\end{theorem}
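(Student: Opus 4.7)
The proof naturally splits into checking that the displayed Gaussian is a solution and ruling out any other positive solution that vanishes at infinity.

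For existence, I would simply plug $u(x) = C e^{-|x|^2/2}$ into the equation: a direct computation gives $\nabla u = -x\,u$ and $\Delta u = (|x|^2 - N)u$, so $-\Delta u + \kappa u = (N + \kappa - |x|^2)u$, while $u\log u^2 = (2\log C - |x|^2)u$. Matching coefficients forces $2\log C = N + \kappa$, which identifies $C_{\kappa,N} = e^{(N+\kappa)/2}$.

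For uniqueness, the plan is in three steps. First, I would derive an a priori Gaussian upper bound: since $u(x)\to 0$ there exists $R_0$ such that $\log u(x)^2 \le -\kappa - 1$ for $|x|\ge R_0$, hence $-\Delta u + u \le 0$ in $\mathbb{R}^N \setminus B_{R_0}$. Comparing $u$ against Gaussian supersolutions of the form $A\,e^{-|x|^2/2}$ via the weak maximum principle on exterior annuli then yields $u(x)\le C\,e^{-|x|^2/2}$ for $|x|$ large, with matching decay of $|\nabla u|$ obtained from standard elliptic gradient estimates. Second, with this Gaussian decay in hand I would apply the method of moving planes of Gidas, Ni, and Nirenberg to conclude that $u$ is radially symmetric about some point $x_0\in\mathbb{R}^N$ and strictly decreasing in $|x-x_0|$; the rapid decay is precisely what is required to start and close the moving planes despite the map $s\mapsto s\log s^2$ not being Lipschitz at $s=0$. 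Third, after translating so that $x_0=0$, the problem reduces to the ODE
\begin{equation*}
-u'' - \frac{N-1}{r}u' + \kappa u = u\log u^2, \qquad u'(0) = 0, \qquad u(r)\to 0,
\end{equation*}
and the substitution $u = e^v$ converts it into $-v'' - \frac{N-1}{r}v' - (v')^2 + \kappa = 2v$. Since $v_0(r) = (N+\kappa)/2 - r^2/2$ is an explicit solution with the right behavior at the origin and at infinity, I would analyze $w = v - v_0$, which satisfies the \emph{linear} ODE $-w'' - \tfrac{N-1}{r}w' - (v'+v_0')w' - 2w = 0$, and invoke the maximum principle together with the boundary behavior $w(r),w'(r)\to 0$ (guaranteed by Step~1) to conclude $w\equiv 0$.

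The main obstacle is Step~1: the nonlinearity $u\log u^2$ has unbounded Lipschitz constant as $u\to 0^+$, so polynomial comparison functions are insufficient and one is forced to compare directly against Gaussian barriers — and it is exactly this forcing that ultimately singles out the Gaussian profile. Once sharp Gaussian decay is in place, both the moving plane argument and the ODE reduction proceed by standard arguments.
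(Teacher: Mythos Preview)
The paper does not actually prove this theorem: it is quoted from \cite[Section~1]{d'Avenia} (see also \cite{Bialynicki-Birula}) and stated without proof. So there is nothing to compare your argument against in the paper itself. Your existence verification is correct and yields the right constant $C_{\kappa,N}=e^{(N+\kappa)/2}$.

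Your uniqueness outline has a genuine gap in Step~3. The boundary condition $w(r)\to 0$ as $r\to\infty$, which you say is ``guaranteed by Step~1'', is not: Step~1 only yields an upper bound $u\le Ce^{-|x|^2/2}$, so $w=\log u-\log u_0$ is bounded \emph{above}, but nothing in Step~1 gives the matching Gaussian lower bound needed to make $w$ bounded below, let alone convergent to $0$. Getting $u/u_0\to 1$ at infinity is essentially the conclusion you are after, so invoking it as a hypothesis is circular. Even granting boundedness of $w$, the linear equation you derive for it has zero-order coefficient $-2$, which is the wrong sign for the maximum principle: at an interior positive maximum of $w$ one gets $-w''\ge 0$ and $-2w<0$, so no contradiction arises. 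A cleaner route for the ODE step is to exploit the scaling symmetry $u\mapsto\lambda u$, $\kappa\mapsto\kappa-\log\lambda^2$: it shows that the one-parameter family $\alpha e^{-r^2/2}$ sweeps out the solutions of the equations with varying $\kappa$, and then a shooting/phase-plane analysis (as in Troy's work on this equation) singles out the unique $\alpha$ for the given $\kappa$. Alternatively, the references the paper cites obtain uniqueness via the equality case in the logarithmic Sobolev inequality, bypassing the ODE analysis entirely.
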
	
The theorem above ensures that any positive solution of \eqref{problemalimite} has an exponential decaying.

	\subsection{The energy functional}
	In the same way of \cite{Alves-de Morais, Alves-Ji, Alves-S da Silva, Ji-Szulkin}, we will explore  a suitable decomposition of the function
	$$F(s)= \displaystyle{\int_{0}^{s}t\log t^2 \,dt}= \frac{1}{2}s^2 \log s^2-\frac{s^2}{2}, \quad s \in \mathbb{R},$$ 
	which allow us to introduce an energy functional associated with $(P_0)$.
	For each $\delta>0$ sufficiently small, we set
	$$
	F_1(s):=\left\{\begin{aligned}
	&0,  \quad \,& s=0,\\
	-\frac{1}{2} &s^2 \log s^2,\quad &0<|s|<\delta,\\
	-\frac{1}{2} &s^2 (\log \delta^2 +3) + 2\delta|s| - \frac{\delta^2}{2},&|s|\geq \delta,
	\end{aligned}
	\right.
	$$
	and
	$$
	F_2(s):=	\left\{\begin{aligned}
	&0,  \quad \,& s=0\\
	-\frac{1}{2} &s^2 \log \left(\frac{s^2}{\delta^2}\right) + 2\delta|s| -\frac{3}{2}s^2-\frac{\delta^2}{2},\,&|s|\geq \delta,
	\end{aligned}
	\right.
	$$	
	for every $s\in \R.$  A direct computation shows that 
		\begin{equation}\label{3}
	F_2(s)-F_1(s)=\frac{1}{2}s^2 \log s^2,\,\,\,\, \forall s \in \mathbb{R}.
	\end{equation}
	Moreover, it is well known that $F_1$ and $F_2$ satisfy the properties $(P_1)-(P_2)$ below:
	\\ 
	\begin{itemize}
		\item[($P_1)$] $F_1$ is an even function with $F_1'(s)s\geq 0$ and $F_1(s)\geq 0$ for all $s \in \mathbb{R}$. Moreover $F_1 \in C^1(\mathbb{R},\mathbb{R})$ and it is also convex if $\delta \approx 0^+$;
		\item[($P_2)$] $F_2 \in C^1(\mathbb{R},\mathbb{R})$ and for each $p \in (2,2^*)$, there exists $C=C_p>0$ such that
		$$ |F_2'(s)|\leq C|s|^{p-1}\,\,\,\, \forall s \in \mathbb{R}.$$
	\end{itemize}
	
	The below proposition is an important result involving the function $F_1$
	\begin{proposition}\label{4}
		The function $F_1$ is a $N$-function. Furthermore, it holds $F_1$, $\tilde{F_1} \in (\Delta_2)$ and there is $l \in (1,2)$ such that
		\begin{equation} \label{2}
		1<l\leq\frac{F'_1(s)s}{F_1(s)}\leq 2,\,\,\,\, \forall s>0.
		\end{equation} 
	\end{proposition}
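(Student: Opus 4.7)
The plan is to verify first that $F_1$ satisfies all four clauses of Definition~\ref{N}, and then prove the ratio inequality (\ref{2}), from which both $\Delta_2$-conclusions follow via property (\ref{FIN}). Evenness is built into the piecewise definition. Continuity and $C^1$-matching at $|s|=\delta$ reduce to checking that both branches give $F_1(\delta) = -\tfrac12\delta^2\log\delta^2$ and $F_1'(\delta) = -\delta(2\log\delta+1)$, which is direct. For convexity I would compute $F_1''(s) = -2\log s - 3$ on $(0,\delta)$ and the constant $F_1''(s) = -\log\delta^2 - 3$ on $(\delta,\infty)$; both are nonnegative as soon as $\delta \leq e^{-3/2}$, which is the standing smallness assumption. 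With this choice, the second branch has positive leading coefficient $a/2$ where $a := -\log\delta^2 - 3 > 0$, so $F_1(t)/t \to \infty$ as $t\to\infty$, while $F_1(t)/t = -t\log t \to 0$ at the origin. Strict positivity of $F_1$ off the origin follows since the quadratic branch has its vertex at $s = -2\delta/a < 0$ and equals $\tfrac{\delta^2(a+3)}{2} > 0$ at $s=\delta$.

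For (\ref{2}), on $(0,\delta)$ a direct calculation gives
$$\frac{F_1'(s)s}{F_1(s)} \;=\; \frac{-s^2(2\log s+1)}{-s^2\log s} \;=\; 2 - \frac{1}{|\log s|} \;\in\; \Bigl(2 - \tfrac{1}{|\log\delta|},\; 2\Bigr).$$
On $(\delta,\infty)$, writing $F_1(s) = \tfrac{a}{2}s^2 + 2\delta s - \tfrac{\delta^2}{2}$ and $F_1'(s)s = as^2 + 2\delta s$, the upper bound reduces to $2F_1(s) - F_1'(s)s = 2\delta s - \delta^2 \geq 0$, valid for $s\geq\delta$; the pointwise lower bound follows from $F_1'(s)s - F_1(s) = \tfrac{a}{2}s^2 + \tfrac{\delta^2}{2} > 0$. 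Both branches agree at $s=\delta$ with common value $2 - 1/|\log\delta|$, ensuring continuity of the ratio on $(0,\infty)$.

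To promote the pointwise inequality to a uniform constant $l > 1$, I would locate the infimum of the ratio on $(\delta,\infty)$ by differentiation; the sign of the derivative of the quotient is governed by $as^2 - a\delta s - \delta^2$, whose unique positive root is $s_* = \tfrac{\delta}{2}\bigl(1+\sqrt{1+4/a}\bigr)$. Using the relation $as_*^2 = a\delta s_* + \delta^2$ to substitute back yields
$$\frac{F_1'(s_*)s_*}{F_1(s_*)} \;=\; \frac{2(a+2)}{a+4} + \frac{2\delta}{s_*(a+4)} \;>\; \frac{2(a+2)}{a+4} \;=\; 2 - \frac{4}{a+4},$$
which exceeds $1$ whenever $a>0$, i.e.\ whenever $\delta < e^{-3/2}$. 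Combining with the estimate on $(0,\delta)$ permits any $l \in (1,\, 2 - 4/(a+4))$. Since $N \geq 3$, the choice $m = 2$ satisfies $m < N$, so (\ref{FIN}) applies with these bounds and yields $F_1, \tilde{F}_1 \in (\Delta_2)$. The main technical obstacle is the uniform lower bound: the ratio attains its infimum at an interior critical point rather than at an endpoint, so one has to locate $s_*$ explicitly and control it in terms of $\delta$; the upper bound, the $N$-function axioms, and the final invocation of (\ref{FIN}) are routine once the smallness of $\delta$ is fixed.
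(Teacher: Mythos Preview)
Your argument is correct. The paper does not supply its own proof of this proposition, deferring instead to \cite[Proposition 3.1]{Alves-S da Silva}; your direct verification of the $N$-function axioms, the explicit computation of the ratio on both branches, the location of the interior minimum at $s_* = \tfrac{\delta}{2}(1+\sqrt{1+4/a})$, and the final appeal to (\ref{FIN}) with $m=2<N$ constitute exactly the kind of self-contained argument that the citation stands in for.
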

	\begin{proof}
		See \cite[Proposition 3.1]{Alves-S da Silva}.
	\end{proof}
	
	By using the definition of the functions $F_1$ and $F_2$ and a Brezis-Lieb type result, it is possible to prove the lemma below whose the idea for the proof can be found in \cite[Lemma~3.1]{Shu}. 
\begin{lemma}\label{Brezis-Lieb}
		Let $(u_n)$ be a bounded sequence in $H^{1}(\mathbb{R}^N)$ such that $u_n \rightarrow u$ a.e. in $\mathbb{R}^N$ and $\{u_n^2 \log u_n^2\}$ is a bounded sequence in $L^1\left(\mathbb{R}^N\right)$. Then,
		$$
		 \int_{\Omega}\left|u_n-u\right|^2 \log \left|u_n-u\right|^2=\int_{\Omega}u_n^2 \log u_n^2-\int_{\Omega}u^2 \log u^2 +o_n(1).
		$$
	\end{lemma}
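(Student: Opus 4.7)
The natural approach is to use the decomposition \eqref{3} by writing $\tfrac{1}{2}s^2\log s^2 = F_2(s) - F_1(s)$, and to establish a Brezis--Lieb type identity for each of the two pieces separately. The plan consists of three main steps: first, verify that $(u_n)$ is bounded in $L^{F_1}(\Omega)$; second, apply Proposition~\ref{BL} to the $F_1$-term; and third, apply a classical Brezis--Lieb argument to the subcritical $F_2$-term.

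For the boundedness in $L^{F_1}$, I would rewrite \eqref{3} as $F_1(u_n) = F_2(u_n) - \tfrac{1}{2}u_n^2 \log u_n^2$. Property $(P_2)$ together with $F_2(0)=0$ implies $|F_2(s)|\leq C|s|^p$ for some $p\in(2,2^*)$, and the Sobolev embedding $H^1(\mathbb{R}^N)\hookrightarrow L^p(\mathbb{R}^N)$ combined with the $H^1$-boundedness of $(u_n)$ yields a uniform bound on $\|F_2(u_n)\|_{L^1(\Omega)}$. Combined with the hypothesis that $u_n^2\log u_n^2$ is bounded in $L^1(\mathbb{R}^N)$ and the non-negativity of $F_1$ from $(P_1)$, this gives $\int_\Omega F_1(u_n)\leq C$. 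Inequality \eqref{In} applied to $F_1$ then produces $\|u_n\|_{F_1}\leq C'$, and Fatou's lemma (again using $F_1\geq 0$ and $u_n\to u$ a.e.) yields $u \in L^{F_1}(\Omega)$. By the triangle inequality, $(u_n-u)$ is bounded in $L^{F_1}(\Omega)$, and $u_n - u \to 0$ almost everywhere in $\Omega$.

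With these bounds in hand, Proposition~\ref{BL} applied with $\Phi = F_1$, $g_n = u_n - u$, and $w = u$ yields
\begin{equation*}
\int_\Omega F_1(u_n - u) = \int_\Omega F_1(u_n) - \int_\Omega F_1(u) + o_n(1).
\end{equation*}
For the $F_2$-term, the subcritical bound $|F_2(s)|\leq C|s|^p$ together with $|F_2'(s)|\leq C|s|^{p-1}$ and the uniform $L^p$-boundedness of $(u_n)$ allow the standard Brezis--Lieb argument (pointwise estimate $|F_2(u_n)-F_2(u_n-u)-F_2(u)|\leq \varepsilon|u_n-u|^p + C_\varepsilon|u|^p$ followed by dominated convergence as $\varepsilon\to 0^+$) to produce
\begin{equation*}
\int_\Omega F_2(u_n-u) = \int_\Omega F_2(u_n) - \int_\Omega F_2(u) + o_n(1).
\end{equation*}
Subtracting these two identities, invoking \eqref{3} and multiplying by $2$, gives the claimed formula. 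The main technical obstacle is step one: extracting a uniform $L^{F_1}$-bound on $(u_n)$ from the signed $L^1$-bound on $u_n^2\log u_n^2$, which only works because the decomposition $F_2 - F_1$ isolates the non-negative, logarithmically-singular part in $F_1$ while confining the subcritical polynomial part to $F_2$, where it is controlled by the Sobolev embedding.
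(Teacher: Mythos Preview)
Your proposal is correct and follows essentially the same route that the paper indicates: use the decomposition \eqref{3} to split $\tfrac{1}{2}s^2\log s^2$ into $F_2-F_1$, handle the $F_1$-part via the Brezis--Lieb type result of Proposition~\ref{BL} (after extracting the $L^{F_1}$-bound from the hypotheses, exactly as you describe), and handle the subcritical $F_2$-part via the classical Brezis--Lieb argument. The paper gives no details beyond pointing to this decomposition and to \cite[Lemma~3.1]{Shu}, so your write-up is in fact more complete than what appears in the text.
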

	
	From now on, we will set $X:=H_0^1(\Omega)\cap L^{F_1}(\Omega)$ endowed with the norm
	$$||\cdot||_X:=||\cdot||_{H_0^1(\Omega)}+||\cdot||_{F_1}.$$
	Here, $L^{F_1}(\Omega)$ designates the Orlicz space associated with $F_1$ and $||\cdot||_{F_1}$ denotes the usual norm associated with $L^{F_1}(\Omega)$.  In view of the last proposition, the space $X$ is a separable and reflexive Banach space. Furthermore, the embeddings  $X\hookrightarrow H^1(\Omega)$ and $X\hookrightarrow L^{F_1}(\Omega)$ are continuous.
	
	The natural candidate for the energy functional associated with $(P_0)$ is given by
	$$I(u):=\frac{1}{2}\int_\Omega(|\nabla u|^2+2|u|^2)+ \int_\Omega F_1(u)-\int_\Omega F_2(u),\,\,\,\, \forall u \in X.$$
	It will be convenient to take the norm of $H_0^1(\Omega)$ as being
	$$||u||_{H_0^1(\Omega)}:=\left(\int_\Omega(|\nabla u|^2+2|u|^2)\right)^{\frac{1}{2}},$$
	which is equivalent to the usual norm of $H_0^1(\Omega)$. Moreover, it is associated with the inner product
	$$
	\langle u,v \rangle_{H_0^1(\Omega)} := \int_{\Omega}(\nabla u\nabla v+2uv),\,\,\,\forall u,v \in H_0^1(\Omega).
	$$
	Similarly, we will consider
	$$
	||u||_{H^1(\mathbb{R}^N)}:=\left(\int_{\mathbb{R}^N}(|\nabla u|^2+2|u|^2)\right)^{\frac{1}{2}},\,\,\,\forall u \in H^1(\mathbb{R}^N),
	$$
	as the norm in $H^1(\mathbb{R}^N)$.
	
	From $(P_1)-(P_2)$, $I \in C^1(X, \mathbb{R})$ and 
	$$I'(u)v=\int_{\Omega}(\nabla u\nabla v+2uv)+\int_{\Omega}F'_1(u)v-\int_{\Omega}F'_2(u)v,\,\,\,\, \forall v \in X.$$
	
In our approach, we will use some properties of the limit problem below
	\begin{equation}
	\left\{\begin{aligned}
	-\Delta u &+ u =u\log u^2,\;\;\mbox{in}\;\;\mathbb{R}^N,\nonumber \\
	&u \in H^{1}(\mathbb{R}^N).
	\end{aligned}
	\right.\leqno{(P_\infty)}
	\end{equation}
	Associated with $(P_\infty)$, we have the functional
	$$I_\infty(u):=\frac{1}{2}\int_{\mathbb{R}^N}(|\nabla u|^2+2u^2)+ \int_{\mathbb{R}^N} F_1(u)-\int_{\mathbb{R}^N} F_2(u), \,\,\,\, \forall u \in Y,$$
	where $Y:=(H^1(\mathbb{R}^N)\cap L^{F_1}(\mathbb{R}^N),||\cdot||_Y)$ and $||\cdot||_Y:=||\cdot||_{H^1(\mathbb{R}^N)}+||\cdot||_{L^{F_1}(\mathbb{R}^N)}$. Related to the functionals $I$ and $I_\infty$, we also have the Nehari sets
	$$\mathcal{N}:=\{u \in X-\{0\};\,I'(u)u=0\}$$
	and
	$$\mathcal{N}_\infty:=\{u \in Y-\{0\};\,I_\infty'(u)u=0\},$$
which can be characterized by 
$$\mathcal{N}:=\Psi_0^{-1}(0)\,\,\,\text{and}\,\,\,\mathcal{N}_\infty:=\Psi_\infty^{-1}(0),$$
with 
\begin{equation}\label{08}
	\Psi_0(u)=I(u)-\frac{1}{2}\int_{\Omega}|u|^2\,\,\,\text{and}\,\,\, \Psi_\infty(u)=I_\infty(u)-\frac{1}{2}\int_{\mathbb{R}^N}|u|^2.
\end{equation}
A direct computations shows that $\Psi_0 \in C^1(X,\mathbb{R})$ and $\Psi_\infty \in C^1(Y,\mathbb{R})$. Furthermore, associated with  $\mathcal{N}$ and $\mathcal{N}_\infty$, we consider the levels $d_0$ and $d_\infty$ given by
	$$d_0:=\inf_{u \in \mathcal{N}} I(u)\,\,\, \text{and}\,\,\, d_\infty:=\inf_{u \in \mathcal{N}_\infty} I_\infty(u).$$

	The next result presents an important property of the sets $\mathcal{N}$ and $\mathcal{N}_\infty$ that is crucial in our approach
	\begin{proposition}\label{07}
		The sets $\mathcal{N}$ and $\mathcal{N}_\infty$  are $C^1$-manifolds with the topology of $(X,||\cdot||_X)$ and $(Y,||\cdot||_Y)$ respectively,. Furthermore, the critical points of $I|_\mathcal{N}$ and ${I_\infty}|_{\mathcal{N}_\infty}$ are critical points of $I$ and $I_\infty$ respectively
	\end{proposition}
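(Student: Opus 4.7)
The plan is a textbook regular-value-plus-Lagrange-multiplier argument, made transparent by the identity $\Psi_0(u)=\tfrac12 I'(u)u$, which follows directly from the decomposition \eqref{3}. First I would verify that $\Psi_0\in C^1(X,\mathbb{R})$: the functional $I$ is $C^1$ on $X$ (as already recorded in the paper, using $(P_1)$--$(P_2)$, Proposition \ref{4}, and the Orlicz machinery of Section~2 to handle the $F_1$-term), while $u\mapsto \tfrac12\int_\Omega u^2$ is $C^1$ thanks to the continuous embedding $X\hookrightarrow H^1(\Omega)\hookrightarrow L^2(\Omega)$. Expanding $\Psi_0$ and using $F_2(s)-F_1(s)=\tfrac12 s^2\log s^2$ yields
\begin{equation*}
\Psi_0(u)=\tfrac12\int_\Omega|\nabla u|^2+\tfrac12\int_\Omega u^2-\tfrac12\int_\Omega u^2\log u^2=\tfrac12\, I'(u)u,
\end{equation*}
so $\mathcal{N}=\Psi_0^{-1}(0)\setminus\{0\}$, as required.

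Second, I would check that $0$ is a regular value of $\Psi_0$ restricted to the open set $X\setminus\{0\}$. Differentiating gives $\Psi_0'(u)v=I'(u)v-\int_\Omega uv$, and for $u\in\mathcal{N}$ (so $I'(u)u=0$) this produces
\begin{equation*}
\Psi_0'(u)u=-\int_\Omega u^2<0,
\end{equation*}
which forces $\Psi_0'(u)\neq 0$ in $X^*$. Consequently $\mathcal{N}$ is an embedded $C^1$-submanifold of codimension one in $X$. The identical computation, replacing $(\Omega,X,I,\Psi_0)$ by $(\mathbb{R}^N,Y,I_\infty,\Psi_\infty)$, shows that $\mathcal{N}_\infty$ is a $C^1$-submanifold of $Y$.

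Finally, for the second assertion, let $u_0\in\mathcal{N}$ be a critical point of $I|_{\mathcal{N}}$. By the Lagrange multiplier rule on $C^1$-Banach manifolds there exists $\lambda\in\mathbb{R}$ with $I'(u_0)=\lambda\,\Psi_0'(u_0)$ in $X^*$; testing against $u_0$ yields $0=I'(u_0)u_0=\lambda\,\Psi_0'(u_0)u_0=-\lambda\|u_0\|_2^2$, so $\lambda=0$ and $I'(u_0)=0$. The $\mathcal{N}_\infty$ case is word-for-word the same. The only delicate point in the whole proposition is the $C^1$-regularity $\Psi_0\in C^1(X,\mathbb{R})$: it is precisely here that the Orlicz framework from Section~2, together with the fact (Proposition~\ref{4}) that $F_1$ is an $N$-function in the $\Delta_2$-class and the subcritical bound on $F_2'$ in $(P_2)$, is genuinely used. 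Once that regularity is in hand, the identity $\Psi_0(u)=\tfrac12 I'(u)u$ collapses the nondegeneracy verification to one line, and the rest is automatic.
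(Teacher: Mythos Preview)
Your proof is correct and follows essentially the same route as the paper's: show that $0$ is a regular value of $\Psi_0$ (resp.\ $\Psi_\infty$) by computing $\Psi_0'(u)u=-\int_\Omega u^2<0$ on $\mathcal{N}$, and then use the Lagrange multiplier rule to conclude that constrained critical points are free critical points. Your additional observation $\Psi_0(u)=\tfrac12 I'(u)u$ is a nice way to make the characterization $\mathcal{N}=\Psi_0^{-1}(0)\setminus\{0\}$ transparent (the paper records this characterization in \eqref{08} without writing the identity explicitly), and your remarks on why $\Psi_0\in C^1(X,\mathbb{R})$ hinges on the Orlicz framework are accurate, but none of this changes the argument's substance.
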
	
	\begin{proof}
		For the first part, from (\ref{08}), it is sufficient to show that $0$ is a regular value for $\Psi_0$ and $\Psi_\infty$. Indeed, if $u\in \Psi_0^{-1}(\{0\})$, then 
		$$\Psi'_0(u)u=I'(u)u-\int_{\Omega}|u|^2= -\int_{\Omega}|u|^2<0,$$
		since $u\neq 0$. Consequently, $\Psi_0'(u)\neq 0$ and $0$ is a regular value of $\Psi_0$. A similar reasoning shows that $0$ is also a regular value of $\Psi_\infty$. 
		
		Now, note that if $u\in\mathcal{N}$ is a critical point of $I|_{\mathcal{N}}$, then it holds
		$$I'(u)=\lambda \Psi'_0(u),$$
		for some $\lambda\in\mathbb{R}$. So, one can see that
		$0=\lambda\Psi_0'(u)u,$
		which implies that $\lambda=0$ and $I'(u)=0,$ because $\Psi_0'(u)u<0$ for $u\in \mathcal{N}$. In a similar way, the result follows for ${I_\infty}|_{\mathcal{N}_\infty}.$
	\end{proof}	
The last proposition yields that a critical point of $I|_\mathcal{N}$ is a point $u \in X$ such that
$$
||I'(u)||_*:=\min_{\lambda \in \mathbb{R}}\,||I'(u)-\lambda\Psi_0'(u)||=0. \quad (\, \mbox{See \cite[Section 5.3]{Willem}} \,)
$$
Analogously, we define a critical point of ${I_\infty}|_{\mathcal{N}_\infty}$.

	\begin{remark}
		\rm{Note that in the preceding proposition, it is crucial the fact that in our approach, in view of the topology induced by the spaces $X$ and $Y$, the energy functionals $I$ and $I_\infty$ are of $C^1$ class. This fact it is not verified if we consider, for example, $I$ and $I_\infty$ with the usual topology of $H_0^1(\Omega)$ and $H^1(\mathbb{R}^N)$.}
	\end{remark}
	
	In the next result, we point out an important property related with the sets $\mathcal{N}$ and $\mathcal{N}_\infty$ that will be explored later on.
	
	\begin{proposition}\label{054}
		There exist $\rho_1$, $\rho_2>0$ such that
		$$\rho_1 \leq ||u||_X,\,\,\,\forall\, u \in \mathcal{N}$$
		and
		$$\rho_2 \leq ||u||_Y,\,\,\,\forall\, u \in \mathcal{N}_\infty.$$
	\end{proposition}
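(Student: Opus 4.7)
The plan is to use the standard Nehari-manifold lower bound argument. The subcritical growth of $F_2'$ given by $(P_2)$ will force a quantitative lower bound on any nontrivial element of $\mathcal{N}$, and an identical argument on $\mathbb{R}^N$ will handle $\mathcal{N}_\infty$.

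Fix $u\in\mathcal{N}$. By definition $I'(u)u=0$, that is
\begin{equation*}
\|u\|_{H_0^1(\Omega)}^2+\int_{\Omega}F_1'(u)u=\int_{\Omega}F_2'(u)u.
\end{equation*}
By property $(P_1)$ the integrand $F_1'(u)u$ is nonnegative, so the left-hand side is bounded below by $\|u\|_{H_0^1(\Omega)}^2$. On the right-hand side, property $(P_2)$ provides, for some fixed $p\in(2,2^*)$ and a constant $C=C_p>0$, the pointwise estimate $|F_2'(s)|\leq C|s|^{p-1}$, hence
\begin{equation*}
\int_{\Omega}F_2'(u)u\leq C\int_{\Omega}|u|^{p}.
\end{equation*}
Combining the two inequalities and using the continuous Sobolev embedding $H_0^1(\Omega)\hookrightarrow L^{p}(\Omega)$ (valid for every $p\in[2,2^*]$ and every open set $\Omega\subset\mathbb{R}^N$ with $N\geq 3$), one obtains
\begin{equation*}
\|u\|_{H_0^1(\Omega)}^2\leq C\|u\|_{p}^{p}\leq C'\|u\|_{H_0^1(\Omega)}^{p}.
\end{equation*}
Since $u\neq 0$ one can divide by $\|u\|_{H_0^1(\Omega)}^2$ and, using $p>2$, extract a constant $c_0>0$ independent of $u$ such that $\|u\|_{H_0^1(\Omega)}\geq c_0$. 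Because $\|u\|_X=\|u\|_{H_0^1(\Omega)}+\|u\|_{F_1}\geq \|u\|_{H_0^1(\Omega)}$, this furnishes the desired $\rho_1$.

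For the second inequality I would repeat the same steps verbatim on $\mathbb{R}^N$, replacing $I$, $\mathcal{N}$, $X$ and the Dirichlet Sobolev embedding by $I_\infty$, $\mathcal{N}_\infty$, $Y$ and the embedding $H^1(\mathbb{R}^N)\hookrightarrow L^p(\mathbb{R}^N)$, respectively. Since $(P_1)$ and $(P_2)$ are properties of $F_1,F_2$ alone (independent of the domain), the same chain of inequalities yields $\|u\|_{H^1(\mathbb{R}^N)}\geq c_0$ and hence $\|u\|_Y\geq \rho_2$ for every $u\in\mathcal{N}_\infty$.

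There is no real obstacle here: the estimate is a routine Nehari-type lower bound, and the only point to watch is that it is essential to discard the nonnegative term $\int F_1'(u)u$ rather than try to bound it from above—this is what keeps $F_1$ (which is only Orlicz-controlled, not Sobolev-controlled) out of the calculation and lets the Sobolev embedding do all the work.
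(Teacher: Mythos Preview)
Your argument is correct and is essentially identical to the paper's: both use $I'(u)u=0$, drop the nonnegative term $\int F_1'(u)u$, bound $\int F_2'(u)u$ via $(P_2)$ and the Sobolev embedding to obtain $\|u\|_{H_0^1(\Omega)}^2\leq C\|u\|_{H_0^1(\Omega)}^p$, and then pass to $\|\cdot\|_X$ via $\|u\|_X\geq \|u\|_{H_0^1(\Omega)}$. The only cosmetic difference is that the paper phrases this last step as the continuous embedding $X\hookrightarrow H_0^1(\Omega)$.
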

	\begin{proof}
		In fact, for $u \in \mathcal{N}$ it holds
		$$0<||u||_{H_0^1(\Omega)}^2\leq||u||_{H_0^1(\Omega)}^2+\int_{\Omega}F'_1(u)u=\int_{\Omega}F'_2(u)u\leq ||u||_{H_0^1(\Omega)}^p,$$
		with $p\in (2,2^*]$. Using the embedding $X \hookrightarrow H_0^1(\Omega)$, one gets
		$$0<1\leq||u||_{H_0^1(\Omega)}^{p-2}\leq C||u||_X^{p-2},$$
		for a convenient $C=C(p)>0$. Thus, the first part of the result follows by setting $\rho_1:=(C^{-1})^{\frac{1}{p-2}}$. The second part of the lemma is proved with a similar argument. 
	\end{proof}

	From now on, let us designate by $u_\infty$ a positive ground state solution of $(P_\infty)$ that can be assumed radial, that is,   
	$$
	I_\infty(u_\infty)=d_\infty>0 \quad \mbox{and} \quad I'_\infty(u_\infty)=0. (\, \mbox{See Theorem \ref{1}}\,)
	$$
	
	The next result relates the levels $d_0$ and $d_\infty$.
	
	\begin{lemma}\label{040} It holds $d_0=d_\infty$.
	\end{lemma}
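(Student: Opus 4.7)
The plan is to prove the two inequalities $d_\infty \leq d_0$ and $d_0 \leq d_\infty$ separately, exploiting the fact that the integrands defining $I$ and $I_\infty$ agree pointwise and the functionals differ only in the domain of integration.

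For $d_\infty \leq d_0$, I would take any $u \in \mathcal{N} \subset X$ and extend it by zero outside $\Omega$ to obtain $\tilde{u} \in H^1(\mathbb{R}^N) \cap L^{F_1}(\mathbb{R}^N) = Y$; this is legitimate because $u \in H_0^1(\Omega)$. Since all integrands vanish where $\tilde{u}=0$, one has $I_\infty(\tilde{u}) = I(u)$ and $I_\infty'(\tilde{u})\tilde{u} = I'(u)u = 0$, so $\tilde{u} \in \mathcal{N}_\infty$. Taking infimum over $u \in \mathcal{N}$ yields $d_\infty \leq d_0$.

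For the reverse inequality, I would use the explicit positive radial ground state $u_\infty(x) = C_{1,N} e^{-|x|^2/2}$ of $(P_\infty)$ from Theorem \ref{1}. Pick $y_n \in \mathbb{R}^N$ with $|y_n|\to\infty$ and a cutoff $\chi \in C^\infty(\mathbb{R}^N)$ with $\chi \equiv 0$ on a neighborhood of $\Omega^c$ and $\chi\equiv 1$ outside a bounded set containing $\Omega^c$. Set $w_n(x):=u_\infty(x-y_n)$ and $v_n:=\chi w_n \in X$. Since $u_\infty$ has Gaussian decay and $|y_n|\to\infty$, the function $w_n$ is exponentially small on the compact set $\mathrm{supp}(1-\chi)$; consequently $\|v_n-w_n\|_{H^1(\mathbb{R}^N)}\to 0$, $\int F_j(v_n) \to \int_{\mathbb{R}^N} F_j(u_\infty)$ for $j=1,2$, and $\int v_n^2\log v_n^2 \to \int_{\mathbb{R}^N} u_\infty^2\log u_\infty^2$. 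Now I project $v_n$ onto $\mathcal{N}$: computing $\frac{d}{dt}I(tv_n)=0$ for $t>0$ with the explicit expressions for $F_1'$ and $F_2'$ shows that $t_n v_n \in \mathcal{N}$ iff
$$\log t_n^2 = \frac{\int_\Omega |\nabla v_n|^2 + \int_\Omega v_n^2 - \int_\Omega v_n^2 \log v_n^2}{\int_\Omega v_n^2},$$
which has a unique solution $t_n>0$. The Nehari identity $I_\infty'(u_\infty)u_\infty=0$ means the analogous right-hand side built from $u_\infty$ vanishes, so the convergences above force $t_n\to 1$. Because $v_n$ is supported in $\Omega$, $I(t_n v_n) = I_\infty(t_n v_n) \to I_\infty(u_\infty) = d_\infty$, and $I(t_n v_n) \geq d_0$ gives $d_0 \leq d_\infty$.

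The main technical obstacle will be uniformly controlling the logarithmic term $\int v_n^2\log v_n^2$, since $s^2|\log s^2|$ is not straightforwardly integrable near the origin. I would split the integral into the regions $\{|v_n|\geq \delta\}$ and $\{|v_n|<\delta\}$, using the $N$-function $F_1$ from Proposition \ref{4} together with the embedding $X\hookrightarrow L^{F_1}$ to handle the small-value contribution via the Brezis--Lieb type result of Proposition \ref{BL}, and the explicit Gaussian form of $u_\infty$ to bound the large-value part. Once these convergences are established, all remaining steps reduce to continuity of the explicit Nehari projection $v\mapsto t(v)$ along the sequence $(v_n)$.
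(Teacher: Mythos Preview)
Your proposal is correct and follows essentially the same approach as the paper: extension by zero for the inequality $d_\infty \leq d_0$, and a cut-off of the translated ground state $u_\infty(\cdot - y_n)$ projected onto $\mathcal{N}$ for the reverse inequality, with $t_n \to 1$ established via the explicit Nehari identity. One minor simplification the paper uses that you might adopt: instead of splitting $\int v_n^2\log v_n^2$ into small/large-value regions and invoking Brezis--Lieb, the paper translates back to write $v_n(\cdot+y_n)=\chi(\cdot+y_n)u_\infty \to u_\infty$ pointwise with domination by $u_\infty$, so the convergence of $\int F_i(v_n)\to\int F_i(u_\infty)$ follows directly from Lebesgue's dominated convergence theorem; no Brezis--Lieb argument is needed.
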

	\begin{proof}
		Fix $\rho>0$ the smallest positive number such that $\mathbb{R}^N \setminus \Omega \subset B_\rho(0)$. Now, let $\phi \in C^\infty(\mathbb{R}^N)$ satisfying 
		$$\left\{\begin{aligned}
		&\phi(x)=0,  \quad \,& x \in B_\rho(0)\\
		&\phi(x)=1, & x\in B_{2\rho}(0)^c,
		\end{aligned}
		\right.
		$$
		with $0\leq\phi\leq1$. Take $(y_n) \subset \mathbb{R}^N$ with $|y_n|\rightarrow \infty$ and set
		$$\phi_n(x):=\phi(x)u_\infty(x-y_n).$$
		For each $n\in\mathbb{N}$, fix $t_n>0$ of a such way that $t_n\phi_n \in \mathcal{N}$. Thereby, 
		\begin{equation}\label{05}
		d_0\leq I(t_n\phi_n)=I_\infty(t_n\phi_n), \quad \forall n \in \mathbb{N}. 
		\end{equation}
		Note that, from the Lebesgue's Dominated Convergence Theorem,
		\begin{equation}\label{04}
		\phi(\cdot+y_n)u_\infty \longrightarrow u_\infty.
		\end{equation}
		Our next step is proving that $t_n \rightarrow 1$. To see why, firstly we recall that $t_n\phi_n \in \mathcal{N}$ leads to
		\begin{equation} \label{tn=0}
		\int_{\mathbb{R}^N}(|\nabla (t_n\phi_n)|^2+|(t_n\phi_n)|^2)= \int_{\mathbb{R}^N}(t_n\phi_n)^2\log (|t_n\phi_n|^2).
		\end{equation} 
	This combined with (\ref{3}) gives 
		$$\int_{\mathbb{R}^N}(|\nabla (\phi_n)|^2+|(\phi_n)|^2)=2\int_{\mathbb{R}^N}(F_2(\phi_n)-F_1(\phi_n))+\log t_n^2\int_{\mathbb{R}^N}\phi_n^2.
		$$
		Using (\ref{04}) and the invariance by translation of $\mathbb{R}^N$, one finds
		$$
		\int_{\mathbb{R}^N}F_i(\phi_n)\longrightarrow \int_{\mathbb{R}^N}F_i(u_\infty) \,\, \mbox{for} \,\, i\in\{1,2\} \quad \text{and} \quad \int_{\mathbb{R}^N}|\phi_n|^2\longrightarrow \int_{\mathbb{R}^N} |u_\infty|^2. 
		$$
		Gathering the limits above with (\ref{04}), one sees that $(t_n)$ is a bounded. So, we may assume that $t_n\rightarrow t_0\geq 0$. If $t_0=0$, the equality \eqref{tn=0} gives a contradiction. Therefore, it holds $t_0>0$ and, from the Lebesgue's Theorem,
		$$\int_{\mathbb{R}^N}(|\nabla (t_0u_\infty)|^2+|(t_0u_\infty)|^2)= \int_{\mathbb{R}^N}|t_0u_\infty|^2\log (t_0u_\infty|^2),
		$$
		showing that $t_0=1$, that is, $t_n \to 1$ as $n \to +\infty$.  Using this limit together (\ref{05}), we arrive at 
		$$d_0\leq \lim I_\infty(t_n\phi_n) = I_\infty(u_\infty)=d_\infty.$$
		As $X \subset Y$, the reverse inequality follows directly of the definition of $I_\infty$, by noting that the condition $I'(u)u=0$ also implies $I_\infty'(u)u=0$.
	\end{proof}	
	
	Next, we establish the nonexistence of \textit{ground state solution} for $(P_0)$, i.e.,  we are going to prove that does not exist a positive solution $u_0$ of $(P_0)$ such that $I(u_0)=d_0$.
	
	\begin{theorem}\label{034} The problem $(P_0)$ has no ground state solution.
	\end{theorem}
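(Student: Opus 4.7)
The plan is to argue by contradiction via extension to the limit problem. Suppose $u_0 \in X$ is a ground state of $(P_0)$, so $u_0 \in \mathcal{N}$, $I(u_0) = d_0$ and $I'(u_0) = 0$. Since $F_1$ and $F_2$ are even, $I(|u_0|) = I(u_0)$ and $|u_0| \in \mathcal{N}$, so $|u_0|$ is a minimizer of $I|_{\mathcal{N}}$; Proposition \ref{07} then makes $|u_0|$ an unconstrained critical point of $I$, hence again a ground state. Replacing $u_0$ by $|u_0|$, I may assume $u_0 \geq 0$ in $\Omega$. The core idea is to extend $u_0$ by zero to $\mathbb{R}^N$, recognise the extension as a ground state of the limit problem $(P_\infty)$, and use the uniqueness Theorem \ref{1} to clash with the fact that the extension vanishes on $\Omega^c$.

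Let $\tilde{u}_0$ denote the extension of $u_0$ by zero outside $\Omega$. Since $u_0 \in H_0^1(\Omega) \cap L^{F_1}(\Omega)$ and $F_1(0) = 0$, one checks that $\tilde{u}_0 \in Y = H^1(\mathbb{R}^N) \cap L^{F_1}(\mathbb{R}^N)$. Because $\tilde{u}_0 \equiv 0$ outside $\Omega$, direct substitution yields
$$
I_\infty(\tilde{u}_0) = I(u_0) = d_0 \qquad \text{and} \qquad I_\infty'(\tilde{u}_0)\tilde{u}_0 = I'(u_0)u_0 = 0,
$$
so $\tilde{u}_0 \in \mathcal{N}_\infty$. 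By Lemma \ref{040}, $d_0 = d_\infty$, which means $\tilde{u}_0$ achieves the infimum of $I_\infty$ on $\mathcal{N}_\infty$. Proposition \ref{07} then promotes $\tilde{u}_0$ to an unconstrained critical point of $I_\infty$ on $Y$, so $\tilde{u}_0$ is a nontrivial nonnegative weak solution of $(P_\infty)$ on all of $\mathbb{R}^N$.

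Standard elliptic regularity gives $\tilde{u}_0 \in C^2(\mathbb{R}^N)$, and Proposition \ref{054} prevents $\tilde{u}_0 \equiv 0$. A V\'azquez-type strong maximum principle, adapted to the logarithmic nonlinearity $s \mapsto s\log s^2 - s$, then shows $\tilde{u}_0 > 0$ on all of $\mathbb{R}^N$; Theorem \ref{1} now forces $\tilde{u}_0$ to coincide, up to translation, with the Gaussian $C_{1,N}\,e^{-|x-y|^2/2}$, which never vanishes. This contradicts $\tilde{u}_0 \equiv 0$ on $\Omega^c$, a set of positive Lebesgue measure (since $\Omega^c$ is a bounded smooth domain), and closes the argument.

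The main delicate step is the passage from ``$\tilde{u}_0$ minimises $I_\infty$ on $\mathcal{N}_\infty$'' to ``$\tilde{u}_0$ is an unconstrained critical point of $I_\infty$'', which hinges on the $C^1$ Nehari manifold structure provided by Proposition \ref{07}. Without the Orlicz space framework of \cite{Alves-S da Silva} that makes $I_\infty$ a $C^1$ functional on $Y$, this step would be non-routine, and one would be unable to invoke the uniqueness of positive solutions of $(P_\infty)$ supplied by Theorem \ref{1}. Once $\tilde{u}_0$ is known to solve the equation classically on $\mathbb{R}^N$, the contradiction is immediate.
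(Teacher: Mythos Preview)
Your proof is correct and follows essentially the same route as the paper: extend a putative ground state by zero to $\mathbb{R}^N$, observe via Lemma~\ref{040} that the extension minimises $I_\infty$ on $\mathcal{N}_\infty$, invoke Proposition~\ref{07} to upgrade it to an unconstrained critical point, and then use the V\'azquez strong maximum principle to force strict positivity on all of $\mathbb{R}^N$, contradicting the vanishing on $\Omega^c$. The only minor differences are that you include an explicit reduction to the nonnegative case (the paper simply takes the ground state to be positive by definition) and you additionally cite Theorem~\ref{1} at the end, which is harmless but redundant once $\tilde u_0>0$ everywhere already yields the contradiction.
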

	
	\begin{proof}
Seeking for a contradiction, assume that $(P_0)$ has a positive ground state solution $w\in X$. Then, 
$$
I'(w)=0 \quad \mbox{and} \quad I(w)=d_0.
$$ 
Let $v$ be the null extension of $w$, i.e., $v(x)=w(x)$ for $x \in \Omega$ and $v(x)=0$ otherwise. It follows that $I_\infty'(v)v=I'(w)w=0$, and by Lemma \ref{040}, $I_\infty(v)=I(w)=d_0=d_\infty$. Therefore, $v \in \mathcal{N}_\infty$ is a critical point for $I_\infty|_{\mathcal{N}_\infty}$, and so, $v$ is a critical point of $I_\infty$. As made in \cite[Section 3.1]{d'Avenia}, by using a suitable version of the maximum principle found in \cite{Vazquez}, one deduces that $v>0$ in whole $\mathbb{R}^N$, which is absurd because $v=0$ in $\mathbb{R}^N \setminus \Omega$, finishing the proof. 
	\end{proof}
	
In order to prove our next proposition, we recall the logarithmic Sobolev inequality below, which can be found in \cite[Section 1]{Alves-de Morais}: 
	\begin{equation}\label{042}
		\int_{\mathbb{R}^N}|u|^2 \log |u|^2\,dx\leq \frac{a^2}{\pi}||\nabla u||_2^2+(\log ||u||_2^2 - N(1+\log a))||u||_2^2,\,\,\,\forall u \in H^1(\mathbb{R}^N),
	\end{equation}	
	where $a>0$ is a fixed positive constant. 
	
	Let us recall that a $(PS)_c$ sequence for $I|_{\mathcal{N}}$ is a sequence $(u_n) \subset \mathcal{N}$ such that
	$$||I'(u_n)||_*\rightarrow 0\,\,\,\,\text{and}\,\,\,\,I(u_n)\rightarrow c.$$
	\begin{lemma}\label{limitasequencia}
	If $(u_n)$ is a $(PS)_c$ sequence for $I|_{\mathcal{N}}$, then $(u_n)$ is bounded in $X$.  
	\end{lemma}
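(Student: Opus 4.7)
The plan is to exploit the structure of $\mathcal{N}$ to first bound $\|u_n\|_2$, then $\|u_n\|_{H_0^1(\Omega)}$ via the logarithmic Sobolev inequality, and finally the Orlicz part $\|u_n\|_{F_1}$ by controlling $\int_\Omega F_1(u_n)$. Throughout, I treat each $u_n$ as extended by zero to $\mathbb{R}^N$, so that (\ref{042}) is directly applicable.

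First, I would compute the quantity $I(u_n)-\tfrac{1}{2}I'(u_n)u_n$. Since $u_n\in\mathcal{N}$, the second term vanishes, and using the decomposition (\ref{3}) (which collapses the $F_1$, $F_2$ contributions into the log term), the computation reduces to
\[
I(u_n)=I(u_n)-\tfrac{1}{2}I'(u_n)u_n=\tfrac{1}{2}\int_\Omega u_n^2.
\]
From $I(u_n)\to c$ this immediately yields that $\|u_n\|_{L^2(\Omega)}^2\to 2c$ is bounded.

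Next, the Nehari identity $\|u_n\|_{H_0^1(\Omega)}^2=\int_\Omega u_n^2\log u_n^2$ (again using (\ref{3}) in $I'(u_n)u_n=0$) combined with the logarithmic Sobolev inequality (\ref{042}) applied to the null extension of $u_n$ gives
\[
\int_\Omega|\nabla u_n|^2+2\int_\Omega u_n^2\;\le\;\tfrac{a^2}{\pi}\|\nabla u_n\|_2^2+\bigl(\log\|u_n\|_2^2-N(1+\log a)\bigr)\|u_n\|_2^2.
\]
Fixing any $a>0$ with $a^2<\pi$ and using that $\|u_n\|_2$ is bounded (so the right-hand log term stays bounded), I can absorb the $\|\nabla u_n\|_2^2$ contribution on the left and conclude $\|\nabla u_n\|_2$, and hence $\|u_n\|_{H_0^1(\Omega)}$, is bounded.

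Finally, to bound $\|u_n\|_{F_1}$, I use (\ref{3}) once more to write
\[
\int_\Omega F_1(u_n)=\int_\Omega F_2(u_n)-\tfrac{1}{2}\int_\Omega u_n^2\log u_n^2.
\]
The second term equals $\tfrac{1}{2}\|u_n\|_{H_0^1(\Omega)}^2$, which is now bounded. For the first term I invoke $(P_2)$: since $|F_2(s)|\le C|s|^p$ for some $p\in(2,2^*)$, the Sobolev embedding $H_0^1(\Omega)\hookrightarrow L^p(\Omega)$ yields $\int_\Omega F_2(u_n)\le C\|u_n\|_p^p$, which is bounded. Hence $\int_\Omega F_1(u_n)$ is bounded, and Proposition~\ref{4} together with (\ref{In}) (applied through $\xi_0(\|u_n\|_{F_1})\le \int_\Omega F_1(u_n)$) gives the bound on $\|u_n\|_{F_1}$, so $\|u_n\|_X$ is bounded.

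The only delicate step is the second one: the log-Sobolev inequality produces a term $\tfrac{a^2}{\pi}\|\nabla u_n\|_2^2$ on its right-hand side, and without the flexibility to take $a$ small it would not close up. The rest is bookkeeping with the $N$-function machinery from Section~2. Note that I never use $\|I'(u_n)\|_*\to 0$ here; boundedness follows just from $u_n\in\mathcal{N}$ and the energy convergence.
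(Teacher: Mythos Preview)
Your proof is correct and follows essentially the same three-step strategy as the paper: bound $\|u_n\|_2$ via $I(u_n)-\tfrac12 I'(u_n)u_n$, then $\|u_n\|_{H_0^1}$ via the logarithmic Sobolev inequality with small parameter, and finally $\int_\Omega F_1(u_n)$. One minor slip: the Nehari identity reads $\|u_n\|_{H_0^1(\Omega)}^2=\int_\Omega u_n^2\log u_n^2+\int_\Omega u_n^2$ (the extra $\int_\Omega u_n^2$ comes from $F_2'(s)s-F_1'(s)s=s^2\log s^2+s^2$), but since this term is already bounded it does not affect your absorption argument.
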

	\begin{proof}
		Let $(u_n)$ be a $(PS)_c$ sequence for $I|_\mathcal{N}$. Since $I'(u_n)u_n=0$, one has
		\begin{equation}\label{043*}
			c+o_n(1)= I(u_n)-\frac{1}{2}I'(u_n)u_n= \frac{1}{2}\int_{\Omega}|u_n|^2,
		\end{equation}
		and so,
		$$\int_{\Omega} |u_n|^2 \leq C, \quad \forall n \in \mathbb{N},$$
		for a convenient $C>0$. Applying the logarithmic inequality for some $a\approx 0^+$, we derive that 
		$$\int_{\mathbb{R}^N} |v|^2\log |v|^2 \leq \frac{1}{2}||\nabla v||_2^2+C(\log ||v||_2^2+1)||v||_2^2, \quad v\in H^1(\mathbb{R}^N), $$
		which leads to
		$$\int_{\Omega} |u_n|^2\log |u_n|^2 \leq \frac{1}{2}||\nabla u_n||_2^2+C,$$
		for some $C>0$ independent of $n$. Therefore, by (\ref{042}), there are $C_1$, $C_2>0$ such that
		$$C_1 \geq \frac{1}{2} ||u_n||_{H_0^1(\Omega)}^2-\frac{1}{2}\int_{\Omega} |u_n|^2\log |u_n|^2\geq C_2 ||u_n||_{H_0^1(\Omega)}^2,$$
		showing that 
		\begin{equation}\label{044*}
			\sup_{n \in \mathbb{N}}\,||u_n||_{H_0^1(\Omega)}^2< \infty.
		\end{equation}
		The definition of $I$ gives
		$$ \int_{\Omega}F_1(u_n)=I(u_n)-||u_n||_{H_0^1(\Omega)}^2 +\int_{\Omega} F_2(u_n).$$
		Hence, by \eqref{043*} and \eqref{044*},  
		\begin{equation}\label{045*}
			\sup_{n \in \mathbb{N}}\,\int_{\Omega} F_1(u_n)<\infty.
		\end{equation}
		The sentences (\ref{044*})  and (\ref{045*}) guarantee that $(u_n)$ is a bounded sequence in $X$.
	\end{proof}

	Our next result is an important compactness lemma that describes the behavior of $(PS)_c$ sequences for $I|_{\mathcal{N}}$.  
	\begin{lemma}\label{031}
		Let $(u_n)$ be a $(PS)_c$ sequence for $I|_{\mathcal{N}}$ with $u_n \rightharpoonup u_0$. Then, going to a subsequence if necessary, either
		\begin{description}
			\item [$i)$] $u_n \rightarrow u_0$ in $X$, or
			\item [$ii)$] There exist $k\in\mathbb{N}$ and $k$ sequences  $(u_n^j)_{n\in\mathbb{N}}$, $u_n^j\in Y$, with
			$$u_n^j\rightharpoonup u_j$$
			and $u_j$ nontrivial solutions of $(P_\infty)$, $j\in \{1,...,k\}$. Furthermore, it holds
			\begin{equation*}
			||u_n||_{H_0^1(\Omega)}^2\rightarrow ||u_0||_{H_0^1(\Omega)}^2+\sum_{j=1}^{k}||u_j||_{H^1(\mathbb{R}^N)}^2\,\,\,\,\text{and}\,\,\,\,I(u_n)\rightarrow I(u_0)+\sum_{j=1}^{k} I_\infty(u_j).
			\end{equation*}
		\end{description}	
	\end{lemma}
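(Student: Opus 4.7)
The plan is the classical Struwe--Benci--Cerami profile decomposition, adapted to the Orlicz space $X = H_0^1(\Omega) \cap L^{F_1}(\Omega)$ and to the logarithmic nonlinearity. By Lemma~\ref{limitasequencia} the sequence $(u_n)$ is bounded in $X$; passing to a subsequence, $u_n \rightharpoonup u_0$ in $X$, $u_n \to u_0$ a.e.\ on $\Omega$, and strongly in $L^p_{\mathrm{loc}}(\Omega)$ for $p \in [1, 2^*)$. I first show that $u_0$ is an unconstrained critical point of $I$. The $(PS)_c$ condition on $\mathcal{N}$ supplies Lagrange multipliers $\lambda_n \in \mathbb{R}$ with $I'(u_n) - \lambda_n \Psi_0'(u_n) \to 0$ in $X^*$; testing against $u_n$, using $I'(u_n)u_n = 0$ and $\Psi_0'(u_n)u_n = -\|u_n\|_2^2$, and noting that \eqref{043*} combined with $c \geq d_0 > 0$ forces $\liminf \|u_n\|_2^2 > 0$, I conclude $\lambda_n \to 0$. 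Passing to the limit term by term via local compactness and Vitali yields $I'(u_0) = 0$.

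Next, extend $v_n^1 := u_n - u_0$ by zero on $\Omega^c$, so that $v_n^1 \in Y$. The core splitting rests on three Brezis--Lieb-type identities applied in parallel: the classical one for $H^1$, Proposition~\ref{BL} for each of the $N$-functions $F_1$ and $F_2$ (both in $(\Delta_2)$ by Proposition~\ref{4}), and Lemma~\ref{Brezis-Lieb} for the pure logarithmic term. Together they give
\begin{equation*}
\|u_n\|_{H_0^1(\Omega)}^2 = \|u_0\|_{H_0^1(\Omega)}^2 + \|v_n^1\|_{H^1(\mathbb{R}^N)}^2 + o_n(1), \qquad I(u_n) = I(u_0) + I_\infty(v_n^1) + o_n(1),
\end{equation*}
and, by subtracting the linearized equations, $I_\infty'(v_n^1) \to 0$ in $Y^*$. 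I then apply a Lions-type dichotomy. If $\sup_{y\in\mathbb{R}^N} \int_{B_1(y)} |v_n^1|^2 \to 0$, Lions' lemma yields $v_n^1 \to 0$ in every $L^p(\mathbb{R}^N)$ with $p \in (2, 2^*)$, hence $\int F_2'(v_n^1)v_n^1 \to 0$ by $(P_2)$. The relation $I_\infty'(v_n^1)v_n^1 = o_n(1)$ together with $F_1'(s)s \geq l\, F_1(s) \geq 0$ from Proposition~\ref{4} then forces $\|v_n^1\|_{H^1(\mathbb{R}^N)} \to 0$ and $\int F_1(v_n^1) \to 0$; the latter converts to $\|v_n^1\|_{F_1} \to 0$ via \eqref{In}, placing us in case (i). Otherwise, along a subsequence there exist $\delta > 0$ and $(y_n^1) \subset \mathbb{R}^N$ with $\int_{B_1(y_n^1)} |v_n^1|^2 \geq \delta$; since $v_n^1 \rightharpoonup 0$ in $L^2_{\mathrm{loc}}$, we must have $|y_n^1| \to \infty$, and then $u_n^1 := v_n^1(\cdot + y_n^1) \rightharpoonup u_1 \neq 0$ in $Y$, with $I_\infty'(u_1) = 0$ by the translation invariance of $I_\infty'$.

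The construction is iterated: having extracted $u_1, \ldots, u_{k-1}$ and points $y_n^1, \ldots, y_n^{k-1}$, set $v_n^k(x) := v_n^{k-1}(x) - u_{k-1}(x - y_n^{k-1})$ and apply the same dichotomy to $v_n^k$. A straightforward induction, using translation invariance of $I_\infty$ and the same three Brezis--Lieb identities between pairs of translated profiles (whose centres satisfy $|y_n^i - y_n^j| \to \infty$ for $i \neq j$, so the supports effectively decouple), preserves the splitting
\begin{equation*}
I(u_n) = I(u_0) + \sum_{j=1}^{k} I_\infty(u_j) + I_\infty(v_n^{k+1}) + o_n(1)
\end{equation*}
together with the analogous norm decomposition. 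Each bubble satisfies $u_j \in \mathcal{N}_\infty$ and $I_\infty(u_j) \geq d_\infty = d_0 > 0$ by Lemma~\ref{040} and Proposition~\ref{054}, while $I(u_n) \to c$, so the iteration must terminate after at most $\lfloor c/d_\infty \rfloor$ steps, with the final $v_n^{k+1}$ falling into the strong-convergence alternative and producing the desired identities in the limit. The main technical obstacle is validating the hypotheses of Lemma~\ref{Brezis-Lieb} at each stage, i.e.\ the uniform $L^1$-bound on $\{|v_n^k|^2 \log |v_n^k|^2\}$ and on the corresponding translated profiles; this I would propagate through the iteration by combining the logarithmic Sobolev inequality \eqref{042} with the uniform $H^1$ bound on each remainder, which itself follows inductively from the splitting above.
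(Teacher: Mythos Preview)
Your overall architecture is correct and matches the paper's, but there is a genuine gap at the point where you write ``by subtracting the linearized equations, $I_\infty'(v_n^1)\to 0$ in $Y^*$.'' The paper explicitly states that for the logarithmic nonlinearity one is \emph{not} able to show that the remainder $\psi_n^1=u_n-u_0$ is a full $(PS)$ sequence for $I_\infty$ in $Y^*$. The obstruction is the $F_1$-term: to control
\[
\int_{\Omega}\bigl(F_1'(u_n-u_0)-F_1'(u_n)+F_1'(u_0)\bigr)\phi
\]
uniformly over the unit ball of $Y$ one would need, near the set where $|u_n|$ is small, an estimate of the type $|F_1'(t)|\le C|t|^{r-1}$ with $r\in(1,2)$, and the resulting H\"older pairing then requires $\phi\in L^{2/(3-r)}$ with $2/(3-r)<2$, which is not controlled by $\|\phi\|_Y$. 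The paper circumvents this by proving only the weaker, but sufficient, statement
\[
\sup_{y\in\mathbb{R}^N}\bigl|I_\infty'(\psi_n^1)\phi^{(y)}\bigr|=o_n(1)\quad\text{for every fixed }\phi\in C_0^\infty(\Omega),\ \|\phi\|_Y\le 1,
\]
via a delicate case analysis on the level sets $\{|u_n|\le t_0\}$, $\{t_0\le|u_n|\le t_1\}$, $\{|u_n|\ge t_1\}$ combined with $\{|u_0|\le\delta\}$ versus $\{|u_0|>\delta\}$; the compact support of $\phi$ is used essentially to recover the missing $L^p$ control for $p<2$. This weaker uniformity is exactly what is needed later, since to show $I_\infty'(u_1)v=0$ for $v\in C_0^\infty(\mathbb{R}^N)$ one only tests $I_\infty'(\psi_n^1)$ against the single translated function $v(\cdot-y_n^1)$.

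A second, smaller point: the ``main technical obstacle'' you flag (propagating the $L^1$ bound on $|v_n^k|^2\log|v_n^k|^2$ through the iteration) is not where the difficulty lies; that bound follows routinely from the $H^1$ bound and the logarithmic Sobolev inequality, and the paper barely comments on it. The real work, and the point that distinguishes the logarithmic case from the polynomial one in Benci--Cerami, is precisely the derivative splitting for $F_1$ described above. Your proposal would become a complete proof once you replace the unjustified $Y^*$-convergence claim by this weaker uniform estimate and supply its proof.
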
	
	\begin{proof}
		Initially, for a convenient sequence of real numbers $(\lambda_n)$, we must have 
		\begin{equation}\label{09}
		I'(u_n)=\lambda_n\Psi_0'(u_n)+o_n(1).
		\end{equation}
	As $I'(u_n)u_n=0$, one gets
		$$\lambda_n\Psi_0'(u_n)u_n=o_n(1).$$
		From this information, we claim that $\lambda_n=o_n(1)$. Indeed, notice that $|\Psi_0'(u_n)u_n|\nrightarrow 0$, otherwise we would have
		$$\Psi_0'(u_n)u_n=\int_{\Omega}|u_n|^2=o_n(1),$$
		and so, since $(u_n)$ is a bounded  sequence in $X$, by interpolation, it follows that 
		$$
		||u_n||_p=o_n(1), \quad \forall p \in (2,2^*).
		$$
		This combines with $(P_2)$ to give
		$$
		\displaystyle{\int_{\Omega}}F'_2(u_n)u_n=o_n(1).
		$$
		Now, the limit above together with the fact that $I'(u_n)u_n=0$ leads to
		$$\int_{\Omega}(|\nabla u_n|^2+2|u_n|^2)+\int_{\Omega}F'_1(u_n)u_n=o_n(1).$$
	Since $F_1$ is convex with $F_1(0)=0$, we know that $F'_1(s)s \geq F_1(s)$ for all $s \in \mathbb{R}$. Then, we can infer that  
		$$\int_{\mathbb{R}^N}(|\nabla u_n|^2+2|u_n|^2)+\int_{\Omega}F_1(u_n)=o_n(1).$$
Using the fact that $F_1 \in (\Delta_{2})$, the last limit yields $u_n \to 0$ in $X$, which contradicts the fact that $u_n \in \mathcal{N}$ in view of the Proposition \ref{054}. So, it follows that $|\Psi_0'(u_n)u_n|\nrightarrow 0$ and $\lambda_n=o_n(1)$. By (\ref{09}), since $(u_n)$ is a bounded sequence, it holds $I'(u_n)\rightarrow 0$, that is, the sequence $(u_n)$ is a $(PS)_c$ sequence for $I$. In addition, accounting that $u_n\rightharpoonup u_0$ and the growth conditions on $F_1$ and $F_2$, we deduce that $I'(u_0)v = 0$, for any $v \in X$, implying that $u_0$ is a solution of $(P_0)$.
		
		From now on, inspired in the ideas of \cite{Benci-Cerami}, we set	
		$$
		\psi_n^1(x):=	\left\{\begin{aligned}&u_n-u_0,\,\,\, &x\in \Omega\\
		&\,\,\,\quad0,\,\,\, &x \in \mathbb{R}^N \setminus \Omega.
		\end{aligned}
		\right.
		$$
		A direct verification shows that $\psi_n^1\rightharpoonup 0$ in $X$. 
	In \cite{Alves-Carriao-Medeiros, Benci-Cerami}, it was proved that $(\psi_n^1|_{\Omega})$ is a $(PS)$ sequence for $I_\infty|_{H_0^1(\Omega)}$ with
		\begin{equation}\label{010}
		I_\infty(\psi_n^1)=I(u_n)-I(u_0)+o_n(1).
		\end{equation}
		However, since we are working with a logarithmic nonlinearity, we are not able to show that  $(\psi_n^1|_{\Omega})$ is also a $(PS)$ sequence.  In our case we will prove that a more weak condition occurs.  More precisely, the following properties hold: \\
		$i)$ $I_\infty(\psi_n^1)=I(u_n)-I(u_0)+o_n(1);$\\
		$ii)$ Let $\phi \in C_0^{\infty}(\Omega)$ with $||\phi||_Y\leq 1$ and, for each $y \in \mathbb{R}^N$, define $\phi^{(y)}(x)=\phi(x+y)$ for all $x \in \mathbb{R}^N$. Then, 
	$$\sup_{y \in \mathbb{R}^N} \|I'_\infty(\psi_n^1)\|||\phi^{(y)}||_Y = o_n(1).$$\\
		\\
		\textbf{Verification of $i)$} \, By simplicity,  in what follows $\psi_n^1$ denotes $\psi_n^1|_{\Omega}$. The definition of  $\psi_n^1$ gives $I_\infty(\psi_n^1)=I(\psi_n^1)$, then by a simple computation, the Lemma \ref{Brezis-Lieb} guarantees that $i)$ holds.  \\
	
	\noindent 	\textbf{Verification of $ii)$} \, First of all, note that 
		\begin{equation}\label{025}
		I_\infty'(\psi_n^1)\phi^{(y)}=\int_{\Omega}(\nabla \psi_n^1\nabla \phi^{(y)}+2\psi_n^1\phi^{(y)})+\int_{\Omega}F'_1(\psi_n^1)\phi^{(y)}-\int_{\Omega}F'_2(\psi_n^1)\phi^{(y)}.
		\end{equation}
		
		In order to prove the item $ii)$, we will need to show the following claim 
		
		\begin{claim}\label{024}
			$$\sup_{y \in \mathbb{R}^N}\int_\Omega |F'_i(u_n-u_0)-(F'_i(u_n)-F'_i(u_0))||\phi^{(y)}|= o_n(1), \quad \mbox{for} \quad i \in \{1,2\}.$$
		\end{claim}	
		
		In the proof of the claim above, we adapt some ideas presented in \cite[Proof of (3.39)]{Alves-Ji2}. In what follows, we  will only show that the claim for function $F_1$, because the proof for $F_2$ follows by using similar arguments (see also \cite[Lemma 3.1]{Alves-Carriao-Medeiros}). 
		
		Given $\varepsilon>0$ and $r \in (1,2)$,  the definition of $F_1$ guarantees that there is $t_0>0$ small enough such that
		\begin{equation}\label{015}
		|F'_1(t)| \leq \varepsilon |t|^{r-1},\,\,\, |t|\leq  2t_0. 
		\end{equation}
		On the other hand, note that it is possible to get $t_1>t_0$ large enough such that
		\begin{equation}\label{016}
		|F_1'(t)| \leq \varepsilon |t|^{2^*-1},\,\,\, |t|\geq t_1-1,
		\end{equation}
		as well as
		\begin{equation}\label{017}
		|F_1'(t)-F_1'(s)| \leq \varepsilon|t_0|^{r-1}, \,\,\,|t-s|\leq s_0, \,\,\,\text{and}\,\,\, |t|,|s|\leq t_1+1,
		\end{equation}
		for some $s_0>0$ small enough. Therefore,  
		\begin{equation}\label{018}
		|F_1'(t)| \leq C_\varepsilon |t|^{r-1}+\varepsilon|t|^{2^*-1},\,\,\ t\in \mathbb{R},
		\end{equation} 
		for some $C_\varepsilon>0$. Now, fixing $R>0$ of such way $B^c_R(0)\subset \Omega$ and using then fact that $F_1$ has a subcritical growth, it is easy to prove that   
		$$
		\int_{B_R(0)\cap \Omega}\mid F_1'(u_n-u_0)-(F'_1(u_n)-F'_1(u_0))\mid \mid\phi^{(y)}\mid=o_n(1), \quad \mbox{uniformly in} \quad y \in \mathbb{R}^N. 
		$$
		Our next step is to estimate the integral below 
		$$
		\int_{B^c_R(0)\cap \Omega}\mid F_1'(u_n-u_0)-(F'_1(u_n)-F'_1(u_0))\mid \mid\phi^{(y)}\mid.
		$$ 
		Fix $\varepsilon>0$. From (\ref{018}), since $R>0$ can be chosen large enough, one has 
		\begin{equation}\label{022}
		\begin{aligned}
		\int_{B^c_R(0)\cap\Omega} \mid F'_1(u_0) \mid\mid\phi^{(y)} \mid&\leq C_\varepsilon\int_{B^c_R(0)\cap\Omega}\mid u_0 \mid^{r-1}\mid \phi^{(y)} \mid+\varepsilon\int_{B^c_R(0)\cap\Omega}\mid u_0 \mid^{2^*-1}\mid \phi^{(y)} \mid\\
		&\leq C(||u_0||_2^{r-1}||\phi^{(y)}||_{\frac{2}{3-r}}+||u_0||_{2^*}^{2^*-1}||\phi^{(y)}||_{2^*})\\
		&\leq \varepsilon C||\phi^{(y)}||_Y,
		\end{aligned} 
		\end{equation}
	where $C$ does not depend on $y \in \mathbb{R}^N$. Setting 
		$$A_n:=\{x\in B^c_R(0) ;\,|u_n(x)|\leq t_0 \}$$
		and
		$$B_n:=\{x\in B^c_R(0)  ;\,|u_n(x)|\geq t_1 \},$$
		we have by (\ref{015}),
		\begin{equation}\label{019}
		\begin{aligned}
		\int_{A_n\cap [|u_0|\leq \delta]}\mid F'_1(u_n-u_0)-F'_1(u_n)\mid \mid \phi^{(y)} \mid &\leq\varepsilon\int_{A_n\cap [|u_0|\leq \delta]}(\mid u_n-u_0 \mid^{r-1}\mid \phi^{(y)} \mid+\mid u_n \mid^{r-1}\mid \phi^{(y)} \mid\\
		&\leq \varepsilon \, C||\phi||_Y, 
		\end{aligned}
	\end{equation}
	where $C$ does not depend on $y \in \mathbb{R}^N$. Here, we have explored the fact that $|\text{supp}\,\phi^{(y)}|=|\text{supp}\,\phi|$ for any $y\in \mathbb{R}^N$.  In a similar way, by using (\ref{016}), 
		\begin{equation}\label{020}
		\int_{B_n\cap [|u_0|\leq \delta]}\mid F'_1(u_n-u_0)-F'_1(u_n)\mid \mid \phi^{(y)} \mid\leq \varepsilon C||\phi||_Y.
		\end{equation}
		Next, let us consider $C_n:=\{x \in B^c_R(0);\,t_0\leq |u_n(x)| \leq t_1\}$. Accounting that $(u_n)$ is a bounded sequence in $X$, we derive that
		$$M:=\sup_{n\in \mathbb{N}} |C_n |< \infty.$$
		Thereby, by (\ref{017}),
		\begin{equation}\label{021}
		\int_{C_n\cap [|u_0|\leq \delta]}\mid F'_1(u_n-u_0)-F'_1(u_n)\mid \mid \phi^{(y)} \mid \leq t_0^{r-1}\varepsilon|C_n |^{1/2}||\phi^{(y)}||_2\leq\varepsilon C||\phi||_Y,
		\end{equation}
		for a convenient $C$ independent of $\varepsilon$ and $y \in\mathbb{R}^N$. From (\ref{019}), (\ref{020}) and (\ref{021}), 
		\begin{equation}\label{023}
		\int_{B^c_R(0)\cap [|u_0|\leq \delta]}\mid F'_1(u_n-u_0)-F'_1(u_n)\mid \mid \phi^{(y)} \mid\leq \varepsilon C||\phi^{}||_Y.
		\end{equation}
		Now, we are going to analyze the case that $|u_0|>\delta$. The boundedness of $(u_n)$ in $X$ together with the inequality (\ref{018}) give 
		$$\begin{aligned}
		\int_{B^c_R(0)\cap [|u_0|> \delta]}\mid F'_1(u_n-u_0)-F'_1(u_n)\mid \mid \phi^{(y)} \mid&\leq C_\varepsilon\int_{B^c_R(0)\cap [|u_0|> \delta]}(\mid u_n-u_0 \mid^{r-1}\mid \phi^{(y)} \mid+\mid u_n \mid^{r-1}\mid \phi^{(y)} \mid\\
		&+\varepsilon C||\phi||_Y,
		\end{aligned}
		$$
	where $C$ is independent of $\varepsilon$ and $y$. Since $u_0 \in X \subset H_0^1(\Omega)$, one has 
		$$
		|B^c_R(0)\cap [|u_0|> \delta]|\longrightarrow 0, \quad \mbox{as} \quad R\rightarrow +\infty.
		$$
		Thereby,
		$$ \begin{aligned}
		&C_\varepsilon\int_{B^c_R(0)\cap [|u_0|> \delta]}(\mid u_n-u_0 \mid^{r-1}\mid \phi^{(y)} \mid+\mid u_n \mid^{r-1}\mid \phi^{(y)} \mid\leq\\
		&\leq C_\varepsilon (||(u_n-u_0||_{2^*}^{r-1} + ||u_n||_{2^*}^{r-1})||\phi||_{2^*}|B_R(0)^c\cap [|u_0|> \delta]|^{(2^*-r)/2^*}\leq\\
		&\leq\varepsilon C||\phi||_Y,
		\end{aligned}
		$$
	for $R>0$ large enough and $C$ independent of $\varepsilon$ and $y$. Using the last information together with (\ref{022}) and  (\ref{023}), one finds
		$$
		\sup_{y \in \mathbb{R}^N}\int_{B^c_R(0)\cap \Omega}\mid F_1'(u_n-u_0)-(F'_1(u_n)-F'_1(u_0))\mid \mid\phi^{(y)}\mid\leq \varepsilon C||\phi||.
		$$ 
		Since $\varepsilon$ is an arbitrary positive number, the last inequality with $||\phi||_Y\leq1$ ensures that the Claim \ref{024} is valid for the function $F_1$ and this finishes the proof of the claim. 
		
		Now, we are ready to show the item $ii)$. In fact, fix $\phi\in C_0^\infty(\Omega)$. So, by (\ref{025}), 
		$$\begin{aligned}
		[I'_\infty(\psi_n^1)-(I'(u_n)-I'(u_0))](\phi^{(y)})&=\int_\Omega [F'_1(u_n-u_0)-(F'_1(u_n)-F'_1(u_0))]\phi^{(y)}\,+\\
		&+\int_\Omega [F'_2(u_n-u_0)-(F'_2(u_n)-F'_2(u_0))]\phi^{(y)}.
		\end{aligned}
		$$
		Hence, by Claim \ref{024}, 
		$$\sup_{y \in \mathbb{R}^N}|I'_\infty(\psi_n^1)-(I'(u_n)-I'(u_0))|||\phi^{(y)}||_Y=o_n(1),$$
	from where it follows that  
		$$\sup_{y \in \mathbb{R}^N}\|I'_\infty(\psi_n^1)\|\,||\phi^{(y)}||_Y=o_n(1),$$
		and the item $ii)$ is proved. 
		If $\psi_n^1 \rightarrow 0$, then the proof would be finished. Thereby, in order to get the desired result, let us consider that 
		\begin{equation}\label{026}
		\psi_n^1\nrightarrow 0\,\,\,\text{in}\,\,\,Y.
		\end{equation}
		In this way, we can prove that the following claim holds
		\begin{claim}\label{029}
			There exist $\lambda_0>0$ and $n_0\in \mathbb{N}$ such that
			$$I_\infty(\psi_n^1)\geq\lambda_0,\,\,\, \forall n\geq n_0.$$
		\end{claim}	 
		\noindent Otherwise, considering a subsequence of $(\psi_n^1)$ if necessary, we would have
		$$ I_\infty(\psi_n^1) = o_n(1).$$
		Now, recalling that  
		$$F_2'(t)t-F_1'(t)t=t^2\log t^2+t^2,\,\,\,t\in\mathbb{R}$$
		the same arguments explored in the proof of item $i)$ ensure that 
		$$I'_\infty(\psi_n^1)\psi_n^1=I'(u_n)u_n-I'(u_0)u_0=o_n(1),$$
	and so, 
		$$I_\infty(\psi_n^1)=I_\infty(\psi_n^1)-\frac{1}{2}I'_\infty(\psi_n^1)\psi_n^1+o_n(1)=\frac{1}{2}\int_{\mathbb{R}^N}|\psi_n^1|^2+o_n(1).$$
		Consequently, one finds $ \displaystyle{\int_{\mathbb{R}^N}}|\psi_n^1|^2=o_n(1)$, and by interpolation, $\displaystyle{\int_{\mathbb{R}^N}}|\psi_n^1|^p=o_n(1)$. So, the growth condition on $F_2$ allow us to conclude that
		$$\int_{\mathbb{R}^N}F_2'(\psi_n^1)\psi_n^1=o_n(1).$$
		From the computations above, one has
		$$  ||\psi_n^1||_{H_1(\mathbb{R}^N)}^2+\int_{\mathbb{R}^N}F'_1(\psi_n^1)\psi_n^1=o_n(1),
		$$ 
		which contradicts (\ref{026}). Then, the claim is proved.
		
		Now, lets us consider a decomposition of $\mathbb{R}^N$ into unit hypercubes $B_i$ with vertices having integer coordinates and set 
	$$d_n:=\max_{i\in\mathbb{N}} ||\psi_n^1||_{L^p(B_i)},$$
		for a fixed $p\in (2,2^*)$.
	\begin{claim} \label{027} There exist $\lambda_1>0$ and $n_1 \in \mathbb{N}$ such that
			$$d_n\geq \lambda_1,\,\,\, \forall n\ge n_1.$$
	\end{claim}
\noindent Arguing as in the last claim, 
		$$I'_\infty(\psi_n^1)\psi_n^1=I'(u_n)u_n-I'(u_0)u_0=o_n(1),$$
		and so
		$$||\psi_n^1||_{H^1(\mathbb{R}^N)}^2 + \int_{\mathbb{R}^N}F'_1(\psi_n^1)\psi_n^1=\int_{\mathbb{R}^N}F'_2(\psi_n^1)\psi_n^1+o_n(1).$$
	By (\ref{2}), 
		$$
		C\left(||\psi_n^1||_{H^1(\mathbb{R}^N)}^2 + \int_{\mathbb{R}^N}F_1(\psi_n^1)\right)\leq \int_{\mathbb{R}^N}F'_2(\psi_n^1)\psi_n^1+o_n(1),
		$$
		for some constant $C>0$. Combining this inequality with $(P_2)$, one finds 
		$$\begin{aligned}
		I_\infty(\psi_n^1)&=\frac{1}{2}||\psi_n^1||_{H^1(\mathbb{R}^N)}^2 + \int_{\mathbb{R}^N}F_1(\psi_n^1)-\int_{\mathbb{R}^N}F_2(\psi_n^1)\leq\\
		&\leq C\int_{\mathbb{R}^N}|\psi_n^1|^p+o_n(1)=C\sum_{i \in \mathbb{N}}||\psi_n^1||_{L^p(B_i)}^p+o_n(1).
		\end{aligned}
		$$
		Since each $B_i$ is a unit hypercube of $\mathbb{R}^N$, there is a constant $\tilde{C}>0$ independent of $i$ such that
		\begin{equation}\label{070}
		||\psi_n^1||_{L^p(B_i)}\leq \tilde{C}||\psi_n^1||_{H^1(B_i)},\,\,\,\forall i \in \mathbb{N}.
		\end{equation}
		Hence, modifying $\tilde{C}>0$ if necessary, it holds
	$$
	I_\infty(\psi_n^1)+o_n(1)\leq C d_n^{p-2}\sum_{i\in \mathbb{N}}||\psi_n^1||_{H^1(B_i)}^2\leq Md_n^{p-2},
	$$
	for some $M>0$. Now, we apply Claim \ref{029} to get the desired result. 
		
		Hereafter, for our goals, let us consider $y_n^1$ the center of $B_i$ in such way that 
		$$d_n=||\psi_n^1||_{L^p(B_i)}.$$
		In this way, one can see that, by taking a subsequence, $|y_n^1|\rightarrow \infty$. Otherwise, for some $R>0$ large enough we must have 
		$$\int_{B_R(0)}|\psi_n^1|^p\geq\int_{B_i}|\psi_n^1|^p=d_n^p\geq \lambda_1^p>0,$$
		which is a contradiction, because the weak limit $\psi_n^1 \rightharpoonup 0$ in $Y$ implies that
		$$
		\int_{B_R(0)}|\psi_n^1|^p\longrightarrow 0.
		$$
	Thereby, we may assume that $|y_n^1|\rightarrow \infty$. 
		
		Notice that, by the invariance of translations of $\mathbb{R}^N$, we conclude that $(\psi_n^1(\cdot+y_n^1))$ is bounded in $Y$. Then, for some $u_1 \in Y$,
		\begin{equation}\label{028}
		\psi_n^1(\cdot+y_n^1)\rightharpoonup u_1 \,\,\,\text{in}\,\,\,Y.
		\end{equation}
		
		Our next step is to prove that $u_1$ is a nontrivial solution of $(P_\infty)$.
		\begin{claim} The function $u_1$ is a nontrivial solution of $(P_\infty)$.
		\end{claim}
		Initially, let us prove that $u_1\neq 0$. To see why, let us denote by $B_0$ the unit hypercube of $\mathbb{R}^N$ centered at the origin. Then, by the Claim \ref{027},
		$$\int_{B_0}|\psi_n^1(\cdot+y_n^1)|^p=\int_{B_i}|\psi_n^1|^p= d_n^p\geq \lambda_1^p>0.$$
		Observe that, by (\ref{028}), $\psi_n^1(\cdot+y_n^1)\rightarrow u_1$ in $L^p(B_0)$. Hence,
		$$\displaystyle{\int_{B_0}}|u_1|^p\geq \lambda_1^p>0,$$  
		showing that $u_1\neq 0$.	
		
		Set 
		$$\Omega_n:=\{x \in\mathbb{R}^N;\,x+y_n^1\in \Omega\}.$$
		Note that, for each $v\in C_0^\infty(\mathbb{R}^N)$, we have that $\text{suppt}\, v\subset \Omega_n$ for $n$ large enough. Setting $v^{(n)}(x):= v(x-y_n^1)$, it follows that
		$$
		\text{suppt} \,v^{(n)}\subset \Omega \quad \mbox{and} \quad v^{(n)}\in H_0^1(\Omega).
		$$ 
		Taking $v\in C_0^\infty(\mathbb{R}^N)$ with $||v||_Y\leq1$, we see that $||v^{(n)}||_X=1$ and
		$$I'_\infty(\psi_n^1(\cdot+y_n^1))v=I'_\infty(\psi_n^1)v^{(n)}.$$
		Thus, by item $ii)$, $I'_\infty(\psi_n^1(\cdot+y_n^1))v=o_n(1)$. On the other hand, standard arguments involving the weak convergence of $(\psi_n^1(\cdot+y_n^1))$ yield
		$$I'_\infty(\psi_n^1(\cdot+y_n^1))v=I'_\infty(u_1)v.$$
		By gathering these information, we derive that $I'_\infty(u_1)v=0$, then $u_1$ is a nontrivial critical point of $I_\infty$, and so, $u_1$ is a solution of $(P_\infty)$.
		
		Define $\psi_n^2:=(\psi_n^1(\cdot+y_n^1)-u_1)$. If $\psi_2^n \rightarrow 0$, then the proof is finished. Otherwise, we use the fact that $\psi_n^2 \rightharpoonup 0$ and the ideas explored above to find a unbounded sequence $(y_n^2)$ of $\mathbb{R}^N$ and to produce $u_2\in Y$ a nontrivial solution of $(P_\infty)$. Continuing with this procedure, for each $j\geq 2$ it is possible to define
		$$ \psi_n^j:=\psi_n^{j-1}(\cdot+y_n^{j-1})-u_{j-1},$$
		with
		$$\left\{\begin{aligned}
		&y_n^{j-1}\rightarrow \infty\\
		&\psi_n^{j-1}\rightharpoonup u_{j-1},
		\end{aligned}
		\right.
		$$
		and $u_{j-1}$ a nontrivial solution of $(P_\infty)$. By exploring the same type of argument used in the prove of item $i)$, one can prove that
		\\
		$iii)$: $||\psi_n^j||_{H^1(\mathbb{R}^N)}^2=||u_n||_{H_0^1(\Omega)}^2-||u_0||_{H_0^1(\Omega)}^2-\displaystyle{\sum_{i=1}^{j-1}}||u_i||_{H^1(\mathbb{R}^N)}^2+o_n(1);$\\
		$iv)$: $I_\infty(\psi_n^j)=I(u_n)-I(u_0)-\displaystyle{\sum_{i=1}^{j-1}}I_\infty(u_i)+o_n(1).$ \\
		$v)$: $\displaystyle \liminf_{n \to \infty}I_\infty(\psi_n^j) >0$ for each $j \in \mathbb{N}$.
		\\
		We finish the proof by proving that the following claim holds.
		\begin{claim} \label{Claimphik} 
			There is a number $k\in\mathbb{N}$ such that $\psi_n^k\rightarrow 0$ in $Y$.
		\end{claim}
		In fact, otherwise it would be possible to get by the preceding procedure a nontrivial solution $u_j$ of $(P_\infty)$ for each $j\in \mathbb{N}$, and so,   
		$$I_\infty(u_j)\geq d_\infty=\inf_{u\in \mathcal{N}_\infty} I_\infty(u)>0,\,\,\,\forall j\in \mathbb{N}.$$
		Thus, from $iv)$, 
		$$I_\infty(\psi_n^j)\leq I(u_n)-I(u_0)-(j-1)d_\infty+o_n(1).$$
		As $(I(u_n))$ is a bounded sequence, for $j$ large enough the last inequality implies that  $\displaystyle \liminf_{n \to \infty}I_\infty(\psi_n^j)<0,$ which contradicts $v)$.  From this, the Claim \ref{Claimphik} is proved and the proof is over. 
		\end{proof}	
	
	\section{Technical Results}
	
	In this section we prove some technical results that are crucial in the proof of Theorem \ref{053}. The main goal is to prove that $I|_\mathcal{N}$  satisfies the $(PS)_c$ condition for all $c\in (d_\infty+\varepsilon,2d_\infty-\varepsilon)$, for some $\varepsilon>0$ small enough. 
	
	In the sequel, 
	$$
	\chi(t):=\left\{\begin{aligned}
	&1,\,\,\,&0\leq t \leq R;\\
	&\frac{R}{t},\,\,\,& R \leq t,
	\end{aligned}
	\right.
	$$
where $R>0$ is such that $\Omega^c \subset B_R(0)$. Next, let $\tau:Y\longrightarrow \mathbb{R}^N$ be given by
	$$\tau(u):=\int_{\mathbb{R}^N}|u|^2\chi(|x|)x$$
	and set 
	$$
P:=\{u\in X;\,u\geq 0\} \quad  \mbox{and} \quad T_0:=\{u\in \mathcal{N}\cap P; \,\tau(u)=0\}.
	$$
Employing the above notations, let us define the level
	$$c_0:= \inf_{u\in T_0}\, I(u),$$
which satisfies 
	\begin{equation}\label{030}
	d_\infty=d_0\leq c_0.
	\end{equation}
	Our first result is the following
	\begin{lemma}\label{047}
		The number $c_0$ satisfies $d_\infty < c_0$.	
	\end{lemma}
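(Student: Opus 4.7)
The plan is to argue by contradiction, assuming $c_0=d_\infty$. I would start by taking a minimizing sequence $(u_n)\subset T_0$, so $u_n\geq 0$, $u_n\in\mathcal{N}$, $\tau(u_n)=0$, and $I(u_n)\to d_\infty=d_0=\inf_{\mathcal{N}}I$ (using Lemma \ref{040}). In particular $(u_n)$ is minimizing for $I|_{\mathcal{N}}$, so Ekeland's variational principle on the $C^1$-manifold $\mathcal{N}$ (Proposition \ref{07}) will produce a sequence $(\tilde u_n)\subset\mathcal{N}$ with $\|\tilde u_n-u_n\|_X\to 0$, $I(\tilde u_n)\to d_\infty$, and $\|I'(\tilde u_n)\|_*\to 0$; that is, a $(PS)_{d_\infty}$ sequence for $I|_{\mathcal{N}}$. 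The bound $|\tau(u)|\leq R\|u\|_2^2$ together with the continuous embedding $X\hookrightarrow L^2(\Omega)$ makes $\tau$ Lipschitz on bounded sets of $X$, so $\tau(\tilde u_n)\to 0$ as well.

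Next I would apply the compactness Lemma \ref{031} to $(\tilde u_n)$. In alternative (i), $\tilde u_n\to u_0$ strongly in $X$, so $u_0\in\mathcal{N}$ with $\|u_0\|_X\geq\rho_1$ by Proposition \ref{054}, $I(u_0)=d_0$, and $I'(u_0)=0$. Since $u_n\geq 0$ and $u_n\to u_0$ in $L^2(\Omega)$ with a.e.\ convergence along a subsequence, one has $u_0\geq 0$; the maximum principle of \cite{Vazquez} used in the proof of Theorem \ref{034} then gives $u_0>0$ in $\Omega$, producing a positive ground state of $(P_0)$ and contradicting Theorem \ref{034}. In alternative (ii), the energy identity $d_\infty=I(u_0)+\sum_{j=1}^{k}I_\infty(u_j)$ with $k\geq 1$, each $I_\infty(u_j)\geq d_\infty$, and the observation that $u_0\neq 0$ would force $u_0\in\mathcal{N}$ and hence $I(u_0)\geq d_\infty$, leaves only the possibility $u_0=0$, $k=1$, and $I_\infty(u_1)=d_\infty$. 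The norm identity then reads $\|\tilde u_n\|_{H_0^1(\Omega)}^2\to\|u_1\|_{H^1(\mathbb{R}^N)}^2$, and by translation invariance $\|\tilde u_n(\cdot+y_n^1)\|_{H^1(\mathbb{R}^N)}^2\to\|u_1\|_{H^1(\mathbb{R}^N)}^2$; combined with the weak convergence $\tilde u_n(\cdot+y_n^1)\rightharpoonup u_1$ in the Hilbert space $H^1(\mathbb{R}^N)$, this upgrades to strong convergence in $H^1(\mathbb{R}^N)$, and hence in $L^2(\mathbb{R}^N)$.

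To finish, I would pass to a further subsequence with $y_n^1/|y_n^1|\to e\in S^{N-1}$ and, via the change of variables $y=x-y_n^1$, write
\begin{equation*}
\tau(\tilde u_n)=\int_{\mathbb{R}^N}|\tilde u_n(y+y_n^1)|^2\,\chi(|y+y_n^1|)(y+y_n^1)\,dy.
\end{equation*}
Since $|\chi(|x|)x|\leq R$ and $\chi(|y+y_n^1|)(y+y_n^1)\to Re$ as $n\to\infty$ (uniformly on bounded $y$, because $|y_n^1|\to\infty$), the strong $L^2$ convergence of the translates together with Lebesgue's dominated convergence theorem will yield $\tau(\tilde u_n)\to Re\,\|u_1\|_2^2\neq 0$, contradicting $\tau(\tilde u_n)\to 0$. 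The hardest steps are ensuring that $\tau(\tilde u_n)\to 0$ survives the passage from a minimizing sequence to a $(PS)$ sequence (handled by the continuity of $\tau$), and upgrading the weak convergence of the translates to strong $L^2$ convergence, which rests on the norm identity supplied by Lemma \ref{031}.
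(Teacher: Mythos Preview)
Your proof is correct and follows the same overall strategy as the paper: argue by contradiction, upgrade a minimizing sequence in $T_0$ to a $(PS)_{d_0}$ sequence via Ekeland, apply the compactness Lemma \ref{031}, rule out alternative (i) via Theorem \ref{034}, reduce alternative (ii) to $u_0=0$, $k=1$ with $u_1$ a ground state of $(P_\infty)$, and derive a contradiction from the barycenter constraint.

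The one place where you diverge from the paper is the final barycenter contradiction. The paper follows the classical Benci--Cerami half-space argument: it writes $w_n=u_n-u_1(\cdot-y_n)$, checks $w_n\to 0$ in $H^1(\mathbb{R}^N)$, uses the radial symmetry of $u_1$ together with the splitting $\mathbb{R}^N=(\mathbb{R}^N)_n^+\cup(\mathbb{R}^N)_n^-$ to show $\langle\tau(u_1(\cdot-y_n)),y_n/|y_n|\rangle\geq C>0$, and then deduces $|\tau(u_1(\cdot-y_n))|\to 0$ from $|\tau(u_n)|,|\tau(w_n)|\to 0$. Your route---passing to a subsequence with $y_n^1/|y_n^1|\to e$, upgrading to strong $L^2$ convergence of the translates via the norm identity, and computing the limit $\tau(\tilde u_n)\to Re\,\|u_1\|_2^2$ directly by dominated convergence---is a clean and more elementary alternative that avoids the half-space bookkeeping and the explicit appeal to radial symmetry of $u_1$. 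Both reach the same contradiction; yours is slightly more self-contained.

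Two small remarks. First, the bound $|\tau(u)|\leq R\|u\|_2^2$ gives boundedness, not Lipschitz continuity; what you actually use (and what is true) is $|\tau(u)-\tau(v)|\leq R\|u-v\|_2(\|u\|_2+\|v\|_2)$, which does give Lipschitz continuity on bounded sets and hence $\tau(\tilde u_n)\to 0$. Second, in alternative (i) you infer $u_0\geq 0$ from $u_n\geq 0$ and $u_n\to u_0$; since $\|\tilde u_n-u_n\|_X\to 0$ this is legitimate, but the paper simply invokes Theorem \ref{034} directly (any ground state can be replaced by its modulus), so the positivity check is not strictly necessary.
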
	
	\begin{proof}
		Arguing by contradiction, in view of (\ref{030}), if the lemma does not hold, then it occurs 
		$$d_\infty=d_0=c_0.$$
		Thus, it is possible to take a sequence $(v_n)$ in $\mathcal{N}\cap P$ such that
		$$\tau(v_n)=0\,\,\,\text{and}\,\,\,I(v_n)\longrightarrow d_0=\inf_{u\in\mathcal{N}}\,I(u).$$
		By applying the Ekeland's Variational Principle, there is a sequence $(u_n)$ in $\mathcal{N}$ satisfying $I(u_n)\leq I(v_n)$, $||u_n-v_n||_X=o_n(1)$ and $(u_n)$ is also a $(PS)_{d_0}$ sequence for $I|_\mathcal{N}$ (see e.g. \cite[Theorem 8.5]{Willem}).
		Thanks to Lemma \ref{031}, there are $k \in \mathbb{N}$ and nontrivial solutions $u_1,...,u_k$  of $(P_\infty)$ with
		\begin{equation}\label{033}  ||u_n||_{H_0^1(\Omega)}^2\longrightarrow||u_0||_{H_0^1(\Omega)}^2+\displaystyle{\sum_{j=1}^{k}}||u_j||_{H^1(\mathbb{R}^N)}^2
		\end{equation}
	and
		\begin{equation}\label{032}
		I(u_n)\longrightarrow I(u_0)+\displaystyle{\sum_{j=1}^{k}}I_\infty(u_j),
		\end{equation}
		where $u_0$ has been chosen in a such way that $u_n \rightharpoonup u_0$ and $u_0$ is a solution of $(P_0)$. Using the fact that $d_\infty=d_0$, it holds
		$$I(u_0)+\sum_{j=1}^{k}I_\infty(u_j)\geq I(u_0)+kd_0.$$
		Since $I(u_n)\rightarrow d_0$ and $I(u_0)\geq 0$, from (\ref{032}) one has $k=0$ or $k=1$. If $k=0$, accounting (\ref{033}), we find
		$$u_n \longrightarrow u_0\,\,\,\text{in}\quad H_0^1(\Omega).$$ 
		Now, as $(u_n)$ is a $(PS)_{d_0}$ sequence for $I|_{\mathcal{N}}$ (and also for $I$) and $u_0$ is a solution of $(P_0)$, one gets
		$$
		\begin{aligned}
		||u_n||_{H_0^1(\Omega)}^2 + \int_{\Omega}F'_1(u_n)u_n&=\int_{\Omega}F'_2(u_n)u_n=\\
		&=\int_{\Omega}F'_2(u_0)u_0+o_n(1)=\\
		&=||u_0||_{H_0^1(\Omega)}^2+\int_{\Omega}F'_1(u_0)u_0+o_n(1),
		\end{aligned}
		$$
	that is, 
		$$||u_n||_{H_0^1(\Omega)}^2+\int_{\Omega}F'_1(u_n)u_n \longrightarrow ||u_0||_{H_0^1(\Omega)}^2+\int_{\Omega}F'_1(u_0)u_0. $$
		In particular, one has
		$$||u_n||_{H_0^1(\Omega)}^2 \longrightarrow ||u_0||_{H_0^1(\Omega)}^2\,\,\,\text{and}\,\,\, \int_{\Omega}F'_1(u_n)u_n \longrightarrow \int_{\Omega}F'_1(u_0)u_0,$$
		which yields that $u_n \rightarrow u_0$ in $H_0^1(\Omega)$ and $u_n \rightarrow u_0$ in $L^{F_1}(\Omega)$, since $F_1 \in (\Delta_2)$. From this, $u_n \rightarrow u_0$ in $X$, and so,  
		$$I(u_n)\longrightarrow I(u_0)=d_0,$$
	showing that $u_0$ is a ground state solution for $(P_0)$, which contradicts Theorem \ref{034}. So, $k=1$ and $u_0=0$. Otherwise, if $u_0 \neq 0$, the function $u_0$ would be a nonzero solution of $(P_0)$, and so,  
	$$ d_0=\lim I(u_n) \geq 2d_0,$$
	giving a new contradiction. By following the notation in the proof of Lemma \ref{031}, one finds
		$$\left\{\begin{aligned}
		&u_n(x+y_n^1)=\psi_n^1(x+y_n^1)\rightharpoonup u_1;\\
		&y_n^1 \rightarrow \infty.
		\end{aligned}
		\right.
		$$
		Note also that $||u_n||_{H_0^1(\Omega)}^2\rightarrow||u_1||_{H_0^1(\Omega)}^2$ and $I_\infty(u_1)=d_\infty.$ Thus, $u_1$ is a ground state solution of $(P_\infty)$.
		
		Now, on accounting of Theorem \ref{1} one can gets a contradiction by following the same ideas in \cite[Lemma 4.3]{Benci-Cerami}. For the sake of completeness, we recall some steps made in \cite[Lemma 4.3]{Benci-Cerami}. Denote, by simplicity, $y_n:=y_n^1$, 
		$$
		(\mathbb{R}^N)_n^+:=\{x\in \mathbb{R}^N;\,\langle x, y_n\rangle_{\mathbb{R}^N} > 0\},
		$$
		$$
		(\mathbb{R}^N)_n^-:=\mathbb{R}^N-(\mathbb{R}^N)_n^+,
		$$
		and
		$$w_n(x):=u_n(x)-u_1(x-y_n).$$
		The above information gives $w_n \rightarrow 0$ in $H^1(\mathbb{R}^N)$.

		By Theorem \ref{1}, without loss of generality we may assume that $u_1$ is a radially symmetric solution of $(P_\infty)$. In the same way as \cite[Lemma 4.3]{Benci-Cerami} (see also \cite[Lemma 4.3]{Alves-Ambrosio-Torres}), we derive that
		$$
		\left\{\begin{aligned}
		&u_1(x-y_n)\geq \frac{1}{2}u_1(0)>0,\,\,\,x \in B_r(y_n);\\
		&u_1(x-y_n)\rightarrow 0,\,\,\,\text{a.e}\,\,\,x\in (\mathbb{R}^N)_n^-\,\,\,\,\text{and}\,\,\,\,\int_{(\mathbb{R}^N)_n^-}|u_1(x-y_n)|^2\chi(|x|)|x| = o_n(1),
		\end{aligned}	
		\right.
		$$ 
		for some $r>0$. Then, 
		\begin{equation}\label{035}
		\langle \tau(u_1(x-y_n)),y_n/|y_n|\rangle_{\mathbb{R}^N}\geq C>0,n\geq n_0,
		\end{equation}
		for some $C>0$. On the other hand, taking into accounting that $\tau(u_1(\cdot-y_n))=\tau(u_n-w_n)$, and that $|\tau(u_n)|$, $|\tau(w_n)|=o_n(1)$, we derive that
		\begin{equation}\label{036}
		|\tau(u_1(x-y_n))|=o_n(1).
		\end{equation}
		From  (\ref{035})-(\ref{036}), we find a contradiction, finishing the proof. 
		\end{proof}
	
	Hereafter we will fix $\rho>0$ as the smallest positive number such that $\Omega^c \subset B_\rho(0).$ Let $\phi(x):=\varphi(\frac{|x|}{\rho})$, where $\varphi \in C_0^\infty([0,\infty))$ is an increasing function such that $\varphi(t)=0$, $0\leq t\leq 1$, and $\varphi(t)=1$, $t\geq 2$. Now, for each $y\in \mathbb{R}^N$, we set
	$$\psi_{y, \rho}(x):=\phi(x)u_\infty(x-y),$$
	where $u_\infty\in \mathcal{N}_\infty$ is a ground state solution of $(P_\infty)$, which is assumed to be a decreasing and radially symmetric at the origin. Finally, fix $t_{y,\rho}>0$ satisfying
	$$\phi_\rho(y):=t_{y,\rho}\psi_{y,\rho}\in \mathcal{N}_\infty.$$
Next, we prove an important property related to the mappings $\phi_\rho(y)$.
	\begin{lemma}\label{041}
		The family of mappings $(\phi_\rho(y))$ satisfies the following limits:
		\begin{enumerate}
			\item [$i)$:] $\displaystyle{\lim_{\rho\rightarrow 0}} I_\infty(\phi_\rho(y))= d_\infty$, uniformly in $y\in\mathbb{R}^N$;
			\item [$ii)$:] For each fixed $\rho>0$, it holds $\displaystyle{\lim_{|y|\rightarrow \infty}} I_\infty(\phi_\rho(|y|)) = d_\infty$.
		\end{enumerate}	
	\end{lemma}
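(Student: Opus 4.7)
The plan is to prove both limits by showing that $\psi_{y,\rho}$ is close to the pure translate $u_\infty(\cdot-y)$ in the norm of $Y$, and then controlling the Nehari projection scalar $t_{y,\rho}$. Define the error
$$e_{y,\rho}(x) := \psi_{y,\rho}(x) - u_\infty(x-y) = (\phi(x)-1)\,u_\infty(x-y),$$
which is supported in $B_{2\rho}(0)$. The first task is to show $\|e_{y,\rho}\|_Y \to 0$ in the two different regimes: (i) as $\rho\to 0$, uniformly in $y \in \mathbb{R}^N$; (ii) for fixed $\rho$, as $|y|\to\infty$. For (i), the Gaussian formula $u_\infty(x) = C_{\kappa,N} e^{-|x|^2/2}$ from Theorem \ref{1} bounds $u_\infty$ and $\nabla u_\infty$ in $L^\infty$, so $\|e_{y,\rho}\|_{H^1(\mathbb{R}^N)}^2 \leq C|B_{2\rho}(0)| \to 0$ uniformly in $y$. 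For (ii), on $\text{supp}\,e_{y,\rho} \subset B_{2\rho}(0)$ we have $|x-y| \geq |y|-2\rho$, so the Gaussian decay gives $\|e_{y,\rho}\|_{H^1(\mathbb{R}^N)}^2 \leq Ce^{-(|y|-2\rho)^2} \to 0$. The same type of estimate, combined with the $\Delta_2$ condition on $F_1$ and the explicit growth bound near the origin built into the definition of $F_1$, yields $\int_{\mathbb{R}^N} F_1(e_{y,\rho}) \to 0$ in both regimes, hence $\|e_{y,\rho}\|_{F_1} \to 0$.

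Next, I would analyze $t_{y,\rho}$ via the Nehari identity for $I_\infty$. Writing out $I_\infty'(t\psi_{y,\rho})(t\psi_{y,\rho})=0$ and using $s^2\log s^2 = 2(F_2(s)-F_1(s))$ together with $(t\psi)^2\log(t\psi)^2 = t^2\psi^2\log t^2 + t^2\psi^2\log\psi^2$, one obtains, after division by $t_{y,\rho}^2$,
$$\log t_{y,\rho}^2 \;=\; \frac{\|\psi_{y,\rho}\|_{H^1(\mathbb{R}^N)}^2 - \int_{\mathbb{R}^N}\psi_{y,\rho}^2\log\psi_{y,\rho}^2}{\int_{\mathbb{R}^N}\psi_{y,\rho}^2}.$$
From $\psi_{y,\rho} \to u_\infty(\cdot-y)$ in $Y$ (with control uniform in $y$ for (i), and as $|y|\to\infty$ for (ii)), continuity of the $F_1$ and $F_2$ integrals on $Y$ (using Proposition \ref{BL} applied to the error), and the translation invariance of $\mathbb{R}^N$, the numerator converges to $\|u_\infty\|_{H^1(\mathbb{R}^N)}^2 - \int u_\infty^2\log u_\infty^2$, which vanishes since $u_\infty \in \mathcal{N}_\infty$. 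The denominator stays bounded away from zero by the same translation invariance and Proposition \ref{054}. Hence $\log t_{y,\rho}^2 \to 0$, so $t_{y,\rho}\to 1$, in each of the two regimes respectively.

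Once $t_{y,\rho}\to 1$ and $\|\psi_{y,\rho} - u_\infty(\cdot-y)\|_Y \to 0$ in the appropriate sense, continuity of $I_\infty$ on $Y$ and translation invariance of $I_\infty$ give
$$I_\infty(\phi_\rho(y)) = I_\infty(t_{y,\rho}\psi_{y,\rho}) \longrightarrow I_\infty(u_\infty(\cdot-y)) = I_\infty(u_\infty) = d_\infty,$$
with the convergence uniform in $y$ for (i) and as $|y|\to\infty$ for (ii).

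The main obstacle I expect is handling the logarithmic term uniformly in $y$ for part (i), because $s^2\log s^2$ is singular at $s=0$ and $\psi_{y,\rho}$ has regions of small values wherever the cutoff $\phi$ is between $0$ and $1$. This is precisely why one passes through the decomposition $F_2-F_1$: the function $F_1$ is a well-behaved $N$-function satisfying $\Delta_2$ by Proposition \ref{4}, so convergence of $\|e_{y,\rho}\|_{F_1} \to 0$ implies $\int F_1(\psi_{y,\rho}) \to \int F_1(u_\infty(\cdot-y))$ via a Brezis--Lieb argument (Proposition \ref{BL}), while $F_2$ has subcritical polynomial growth and is controlled by $L^p$ convergence for $p\in(2,2^*)$. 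Combined, these yield the convergence of $\int \psi_{y,\rho}^2\log\psi_{y,\rho}^2$ needed above, uniformly in $y$ thanks to the $L^\infty$ bound on $u_\infty$ and its Gaussian decay.
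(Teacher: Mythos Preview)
Your approach is essentially the paper's: show $e_{y,\rho}:=\psi_{y,\rho}-u_\infty(\cdot-y)\to 0$ in $Y$ (uniformly in $y$ for (i), as $|y|\to\infty$ for (ii)), deduce $t_{y,\rho}\to 1$, and conclude by continuity of $I_\infty$ together with translation invariance. One small imprecision to fix: your bound $\|e_{y,\rho}\|_{H^1}^2\leq C|B_{2\rho}(0)|$ ignores that $|\nabla\phi|\sim \rho^{-1}$, so the gradient contribution is actually $O(\rho^{N-2})$ (still $\to 0$ since $N\geq 3$, and the paper writes this out); also, the paper estimates $\int|F_1(\psi_{y,\rho})-F_1(u_\infty(\cdot-y))|$ directly by the mean value theorem on $B_{2\rho}(0)$ rather than via Brezis--Lieb, which avoids having to argue uniformity of Proposition~\ref{BL} over the parameter $y$.
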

	\begin{proof}
		\textbf{Verification of $i)$:} From the definition of $\psi_{y, \rho}$ and the properties of $u_\infty$ (see Theorem \ref{1} above), for each fixed $p\in[2,2^*]$, one has
		$$\begin{aligned}
		||\psi_{y,\rho}- u_\infty(\cdot-y)||_p^p &\leq C\int_{B_{2\rho}(0)}|u_\infty(\cdot-y)|^p\\
		&\leq C\int_{B_{2\rho}(0)}|u_\infty(0)|^p\\
		&\leq \tilde{C}\rho^N=o_\rho(1),\,\,\,\, \forall y \in \mathbb{R}^N.
		\end{aligned}
		$$	
		Similarly, since $N\geq 3$,
		$$\begin{aligned}
		||\nabla (\psi_{y,\rho}- u_\infty(\cdot-y))||_2^2&\leq C\int_{B_{2\rho}(0)}|\nabla \phi|^2|u_\infty(\cdot-y)|^2+C\int_{B_{2\rho}(0)}|\phi(x)-1|^2|\nabla u_\infty(\cdot-y)|^2\\
		&\leq C_1\rho^N + C_2 \rho^{N-2},\,\,\,\,\forall y\in \mathbb{R}^N.
		\end{aligned}
		$$
		Hence,
		$$||\psi_{y,\rho}||_p\longrightarrow|| u_\infty(\cdot-y)||_p \,\,\,\text{as}\,\,\,\rho\rightarrow 0,$$
		as well as
		$$||\psi_{y,\rho}||_{H^1(\mathbb{R}^N)}\longrightarrow|| u_\infty(\cdot-y)||_{H^1(\mathbb{R}^N)},\,\,\,\text{as}\,\,\,\rho\rightarrow 0,$$
		uniformly in $y \in \mathbb{R}^N$. From this, 
		$$\int_{\mathbb{R}^N} F_2(\psi_{y,\rho}) \longrightarrow \int_{\mathbb{R}^N}F_2(u_\infty),\,\,\,\text{as}\,\,\, \rho \rightarrow 0,$$
		uniformly in $y \in \mathbb{R}^N$.
		Now, using the definition of $\psi_{y,\rho}$, one gets
		\begin{equation}\label{038}
		\int_{\mathbb{R}^N}|F_1(\psi_{y, \rho}) - F_1(u_\infty(\cdot-y))| =\int_{B_\rho(0)}|F_1(\psi_{y, \rho}) - F_1(u_\infty(\cdot-y))|.
		\end{equation}	
		By the mean value theorem, 
		\begin{equation}\label{037}
		\int_{B_\rho(0)}|F_1(\psi_{y, \rho}) - F_1(u_\infty(\cdot-y))|=\int_{B_\rho(0)}|F'_1(\theta_{y,\rho})||\phi(x)-1|| u_\infty(\cdot-y))|,
		\end{equation}
		where $|\theta_{y,\rho}|\leq |\psi_{y,\rho}|+|u_\infty(\cdot-y)|$. Then, since $(\theta_{y,\rho}) \subset \mathbb{R}$ is a bounded and $F_1\in C^1(\mathbb{R})$, we derive that
		$$\int_{B_\rho(0)}|F'_1(\theta_{y,\rho})||\phi(x)-1||u_\infty(\cdot-y))|\leq C\int_{B_\rho(0)}|\phi(x)-1||u_\infty(0)|=o_\rho(1).$$
		From (\ref{038})-(\ref{037}),
		$$\int_{\mathbb{R}^N}F_1(\psi_{y,\rho}) \longrightarrow \int_{\mathbb{R}^N}F_1(u_\infty(\cdot-y)),\,\,\,\forall y\in \mathbb{R}^N.$$
		Adapting the ideas used in the proof of Lemma \ref{040}, we can show that $t_{y,\rho} \rightarrow 1$ as $\rho \rightarrow 0$, and so, 
		$$ ||\phi_\rho(y)||_{H^1(\mathbb{R}^N)}= ||t_{y,\rho}\psi_{y,\rho}||_{H^1(\mathbb{R}^N)}\longrightarrow|| u_\infty||_{H^1(\mathbb{R}^N)} \,\,\,\text{as}\,\,\,\rho \rightarrow 0,$$
		and
		$$\int_{\mathbb{R}^N}F_i(\phi_\rho(y))\longrightarrow \int_{\mathbb{R}^N}F_i(u_\infty),\,\,\, i\in \{1,2\}.$$
		The last convergences yield that
		$$\lim_{\rho \rightarrow 0} I_\infty (\phi_\rho(y)) \longrightarrow I_\infty(u_\infty)=d_\infty,$$
		uniformly in $y \in \mathbb{R}^N$, proving the part $i)$ of lemma.\\ 
		\textbf{Verification of $ii)$:} The proof follows as in the proof Lemma \ref{040} and it will be omitted.
	\end{proof}	
	
	A byproduct of the last lemma is the following corollary.
	\begin{corollary}\label{048}
		Given $\varepsilon \approx 0^+$, there exists $\rho_0>0$ such that
		$$
		\sup_{y \in \mathbb{R}^N} I_\infty(\phi_\rho(y))<2d_\infty-\varepsilon, \quad \forall \rho \in (0,\rho_0).
		$$	
	\end{corollary}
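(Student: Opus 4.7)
The plan is to derive this bound almost immediately from part $(i)$ of Lemma \ref{041}, which provides uniform convergence of $I_\infty(\phi_\rho(y))$ to $d_\infty$ as $\rho \to 0^+$, with no dependence on $y \in \mathbb{R}^N$. First I would record that $d_\infty > 0$ (it is the energy of the ground state $u_\infty$ of $(P_\infty)$), so the hypothesis $\varepsilon \approx 0^+$ may be taken to mean $0 < \varepsilon < d_\infty$. Consequently the number
$$\eta := d_\infty - \varepsilon > 0$$
is strictly positive and available as a margin.

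Next, I would apply Lemma \ref{041}$(i)$ with this $\eta$: there exists $\rho_0 > 0$ such that for every $\rho \in (0, \rho_0)$ and every $y \in \mathbb{R}^N$,
$$|I_\infty(\phi_\rho(y)) - d_\infty| < \eta.$$
In particular $I_\infty(\phi_\rho(y)) < d_\infty + \eta = 2d_\infty - \varepsilon$ for all $y \in \mathbb{R}^N$, and taking the supremum over $y$ gives the conclusion
$$\sup_{y \in \mathbb{R}^N} I_\infty(\phi_\rho(y)) \leq d_\infty + \eta < 2d_\infty - \varepsilon, \quad \forall \rho \in (0, \rho_0).$$

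There is no real obstacle in this argument; the content of the corollary is entirely contained in the uniformity-in-$y$ statement of Lemma \ref{041}$(i)$, and the choice of $\eta$ is arranged precisely so that the upper bound $d_\infty + \eta$ sits below $2d_\infty - \varepsilon$. Note that part $(ii)$ of Lemma \ref{041} is not needed for this corollary; it records separate information (pointwise behavior as $|y| \to \infty$ for fixed $\rho$) that will presumably be exploited elsewhere when constructing admissible paths or when analyzing the barycenter map $\tau$ in Section 5.
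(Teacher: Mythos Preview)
Your approach is exactly what the paper intends: the corollary is stated there as an immediate byproduct of Lemma~\ref{041}\,$(i)$, and your argument fills in precisely those details.

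There is, however, a small arithmetic slip in your final displayed inequality. With $\eta := d_\infty - \varepsilon$ you have $d_\infty + \eta = 2d_\infty - \varepsilon$, not $d_\infty + \eta < 2d_\infty - \varepsilon$; so passing to the supremum only yields $\sup_y I_\infty(\phi_\rho(y)) \le 2d_\infty - \varepsilon$, not the strict inequality claimed in the statement. The fix is trivial: apply Lemma~\ref{041}\,$(i)$ with any tolerance strictly smaller than $d_\infty - \varepsilon$, e.g.\ $\eta := \tfrac{1}{2}(d_\infty - \varepsilon)$, so that
\[
\sup_{y\in\mathbb{R}^N} I_\infty(\phi_\rho(y)) \le d_\infty + \eta < 2d_\infty - \varepsilon.
\]
With this adjustment the argument is complete.
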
 
	
	Next, we establish more two important properties of the mappings $\phi_\rho(y)$.
	\begin{lemma}\label{046}
		Fixed $\rho>0$, there exists $R_0>\rho$ such that
		\begin{enumerate}
			\item [$i)$:] $d_\infty< I(\phi_\rho(y))<\frac{c_0+d_\infty}{2},\,\,\,|y|\geq R_0$;
			\item [$ii)$:] $\langle \tau(\phi_\rho(y)), y \rangle,\,\,\, |y|=R_0$. 
		\end{enumerate}	
	\end{lemma}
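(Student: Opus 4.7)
The argument rests on two facts: the cutoff $\phi$ vanishes on $B_\rho(0)\supset\Omega^c$, so $\mathrm{supp}\,\phi_\rho(y)\subset\Omega$ for every $y$; and $u_\infty$ has Gaussian decay by Theorem \ref{1}, so $u_\infty(\cdot-y)$ concentrates sharply at $y$.

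\textit{Part (i).} Extending $\phi_\rho(y)$ by zero yields an element of $X$, and since the integrand is supported in $\Omega$, one has $I(\phi_\rho(y))=I_\infty(\phi_\rho(y))$. By construction $\phi_\rho(y)\in\mathcal{N}_\infty$, hence $I_\infty(\phi_\rho(y))\ge d_\infty$. The inequality is strict: if equality held, then $\phi_\rho(y)$ would be a minimizer of $I_\infty$ on $\mathcal{N}_\infty$, and so by Proposition \ref{07} a positive critical point of $I_\infty$, i.e.\ a positive solution of $(P_\infty)$; Theorem \ref{1} forces it to be of the form $C_{\kappa,N}e^{-|\cdot-z|^2/2}$, which is strictly positive on $\mathbb{R}^N$, contradicting $\phi_\rho(y)\equiv 0$ on $B_\rho(0)$. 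For the upper bound, Lemma \ref{041}(ii) gives $I_\infty(\phi_\rho(y))\to d_\infty$ as $|y|\to\infty$, so Lemma \ref{047} (which provides $c_0>d_\infty$) lets me choose $R_1>\rho$ with $I(\phi_\rho(y))<(c_0+d_\infty)/2$ for all $|y|\ge R_1$.

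\textit{Part (ii).} Assuming the intended statement is $\langle\tau(\phi_\rho(y)),y\rangle>0$ on the sphere $|y|=R_0$, I would analyze $\tau(\phi_\rho(y))$ asymptotically. Changing variables $z=x-y$,
\[
\tau(\phi_\rho(y)) = t_{y,\rho}^{\,2}\int_{\mathbb{R}^N}\phi(z+y)^2\,u_\infty(z)^2\,\chi(|z+y|)(z+y)\,dz.
\]
Fix $M>0$ and split the integral into $\{|z|\le M\}$ and $\{|z|>M\}$. On the tail, the Gaussian decay of $u_\infty$ bounds the contribution by a constant times $\int_{|z|>M}e^{-|z|^2}|z+y|\,dz$, which is negligible uniformly in $y$ as $M\to\infty$ (after noting $|\chi(|x|)x|\le R$). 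On the bulk, once $|y|$ is much larger than $M$ one has $|z+y|\ge R$, so $\chi(|z+y|)(z+y)=R(z+y)/|z+y|\to R\,y/|y|$ uniformly for $|z|\le M$ as $|y|\to\infty$, and similarly $\phi(z+y)\to 1$. Since $t_{y,\rho}$ stays bounded away from $0$ (same reasoning as in the proof of Lemma \ref{041}), one concludes
\[
\tau(\phi_\rho(y)) \;=\; t_{y,\rho}^{\,2}\,R\,\|u_\infty\|_2^2\,\frac{y}{|y|} + o_{|y|}(1).
\]
Taking the inner product with $y$ gives $\langle\tau(\phi_\rho(y)),y\rangle = t_{y,\rho}^{\,2}R\|u_\infty\|_2^2\,|y|+o(|y|)\to+\infty$, hence $>0$ for $|y|\ge R_2$ large. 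Setting $R_0:=\max\{R_1,R_2\}$ settles both items.

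\textit{Main obstacle.} The technical heart is the asymptotic evaluation of $\tau(\phi_\rho(y))$. The weight $\chi(|z+y|)(z+y)$ is bounded, but isolating the leading term $Ry/|y|$ requires the near/far splitting above carried out uniformly in $y$; this is routine given the Gaussian decay of $u_\infty$ and mimics the truncation arguments used in Lemmas \ref{040} and \ref{041}.
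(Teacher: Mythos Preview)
Your proof is correct and follows the paper's strategy closely. Two small differences are worth noting.

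For the strict inequality $d_\infty<I(\phi_\rho(y))$ in part~(i), the paper argues via Theorem~\ref{034}: since $\phi_\rho(y)\in\mathcal{N}$ and $d_0=d_\infty$ by Lemma~\ref{040}, equality would make $\phi_\rho(y)$ a ground state of $(P_0)$, which is ruled out. You instead work on the $\mathcal{N}_\infty$ side and invoke Theorem~\ref{1} to say a minimizer must be a strictly positive Gaussian. Both routes are valid and in fact rest on the same mechanism (a nonnegative critical point must be strictly positive by the strong maximum principle, cf.\ \cite{Vazquez}); your version just bypasses the packaging of Theorem~\ref{034}. One small point: to pass from ``nonnegative critical point of $I_\infty$'' to ``positive $C^2$ solution'' you implicitly need the regularity and maximum-principle step that the paper spells out in the proof of Theorem~\ref{034}, so a one-line reference there would tighten your argument.

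For part~(ii), the paper simply defers to \cite[Lemma~4.3(b)]{Benci-Cerami}, whereas you carry out the asymptotic computation explicitly. Your near/far splitting, together with $|\chi(|x|)x|\le R$, the Gaussian decay of $u_\infty$, and $t_{y,\rho}\to 1$ as $|y|\to\infty$ (Lemma~\ref{041}), gives exactly the leading term $t_{y,\rho}^2 R\|u_\infty\|_2^2\,y/|y|$ and the conclusion $\langle\tau(\phi_\rho(y)),y\rangle>0$ for $|y|$ large. This is precisely the Benci--Cerami computation, written out in the present setting.
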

	\begin{proof}
		\textbf{Verification of $i)$:} By the definition of $\phi_\rho(y)$, 
		$$d_\infty \leq I_\infty (\phi_\rho(y))=I(\phi_\rho(y)).$$
		On the other hand, as $d_\infty=d_0$ (see Lemma \ref{040}) and $(P_0)$ has no ground state solution, it follows that
		$$d_\infty < I(\phi_\rho(y)), \quad \mbox{for any} \quad \rho> 0 \quad \mbox{and} \quad y\in \mathbb{R}^N.$$
	
		Finally, note that, by part $ii)$ of Lemma \ref{041},
		$$I(\phi_\rho(y))<\frac{c_0+d_\infty}{2},\,\,\, |y|\geq R_0,$$
		for some $R_0>0$ large enough, because $c_0>d_\infty$. This completes the proof of item $i)$.\\
		\textbf{Verification of $ii)$:} The proof follows as in \cite[Lemma 4.3 (b)]{Benci-Cerami}.
	\end{proof}

	We finish this section by showing that $I|_\mathcal{N}$ satisfies the $(PS)_c$ for some levels $c \in \mathbb{R}$. 
	\begin{proposition}\label{050}
		For each fixed $\varepsilon \approx 0^+$, the functional $I|_\mathcal{N}$ satisfies the $(PS)_c$ condition for $c \in (d_\infty+\varepsilon,2d_\infty-\varepsilon)$. 
	\end{proposition}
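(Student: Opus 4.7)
The plan is to exploit the compactness dichotomy of Lemma \ref{031}, combined with the energy lower bounds $I|_{\mathcal{N}} \geq d_0 = d_\infty$ (from Lemma \ref{040}) and $I_\infty|_{\mathcal{N}_\infty} \geq d_\infty$, to rule out the second alternative on the prescribed energy interval. Given a $(PS)_c$ sequence $(u_n) \subset \mathcal{N}$ with $c \in (d_\infty + \varepsilon, 2 d_\infty - \varepsilon)$, Lemma \ref{limitasequencia} makes $(u_n)$ bounded in $X$, and after passing to a subsequence $u_n \rightharpoonup u_0$ in $X$ with $u_0$ a solution of $(P_0)$ (as noted at the beginning of the proof of Lemma \ref{031}, a Lagrange-multiplier argument upgrades $(u_n)$ to a $(PS)_c$ sequence for $I$ itself, so $I'(u_0)=0$).

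By Lemma \ref{031}, either $u_n \to u_0$ in $X$, which is the conclusion sought, or there are $k \geq 1$ nontrivial solutions $u_1,\dots,u_k$ of $(P_\infty)$ such that
$$ c \;=\; I(u_0) + \sum_{j=1}^{k} I_\infty(u_j). $$
Since $u_0$ is either trivial or belongs to $\mathcal{N}$, one has $I(u_0) \in \{0\}\cup [d_\infty,\infty)$, and each $u_j \in \mathcal{N}_\infty$ gives $I_\infty(u_j)\geq d_\infty$. I would then split into cases: if $k \geq 2$, then $c \geq 2 d_\infty$; if $k = 1$ and $u_0 \neq 0$, then $c \geq I(u_0) + d_\infty \geq 2 d_\infty$; both contradict $c < 2d_\infty-\varepsilon$.

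The delicate case is $k=1$ with $u_0=0$, in which $c = I_\infty(u_1)$ for some nontrivial critical point $u_1$ of $I_\infty$. If $u_1$ does not change sign, the strong maximum principle (via the argument used after Theorem \ref{034}) combined with the uniqueness statement of Theorem \ref{1} forces $|u_1|$ to be a translate of $u_\infty$, whence $c = d_\infty$, contradicting $c > d_\infty + \varepsilon$. If $u_1$ changes sign, the evenness of $F_1, F_2$ gives $I'_\infty(u_1) u_1^{\pm} = I'_\infty(u_1^{\pm}) u_1^{\pm}$, and since $u_1$ is critical this yields $u_1^{\pm} \in \mathcal{N}_\infty$; splitting the functional along $\{u_1 > 0\}$ and $\{u_1 < 0\}$ then gives
$$ I_\infty(u_1) \;=\; I_\infty(u_1^+) + I_\infty(u_1^-) \;\geq\; 2 d_\infty, $$
contradicting $c < 2d_\infty-\varepsilon$.

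The main obstacle is the sign-changing sub-case: one must verify that $u_1^{\pm}$ still belong to the Orlicz factor $L^{F_1}(\mathbb{R}^N)$ of $Y$, that the decomposition $I_\infty(u_1) = I_\infty(u_1^+) + I_\infty(u_1^-)$ is rigorous (it uses $\int F_i(u_1) = \int F_i(u_1^+) + \int F_i(u_1^-)$ because $F_i$ is even, together with $\int \nabla u_1^+ \cdot \nabla u_1^- = 0$), and that the odd-ness of $F'_1,F'_2$ really does deliver $u_1^{\pm} \in \mathcal{N}_\infty$. Once these points are dispatched, the three cases above exhaust the second alternative of Lemma \ref{031}, leaving only the strong convergence $u_n \to u_0$ in $X$, i.e.\ the $(PS)_c$ condition at the level $c$.
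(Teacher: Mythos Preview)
Your proof is correct and follows essentially the same route as the paper's: use the compactness dichotomy of Lemma \ref{031} and rule out the splitting alternative by energy counting against the bounds $d_\infty$ and $2d_\infty$. Your treatment of the case $k=1$, $u_0=0$ is in fact more detailed than the paper's, which simply asserts $u_1>0$ and $I_\infty(u_1)=d_\infty$ without explicitly disposing of the sign-changing possibility; your splitting $I_\infty(u_1)=I_\infty(u_1^+)+I_\infty(u_1^-)\geq 2d_\infty$ is the natural way to close that gap.
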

	\begin{proof}
	Let $(u_n)$ be a $(PS)_c$ sequence for $I|_\mathcal{N}$. By Lemma \ref{limitasequencia}, we know that $(u_n)$ is a bounded sequence in $X$. Since $X$ is a reflexive space, we may assume that
		$$u_n \rightharpoonup u_0 \,\,\,\text{in}\,\,\, X.$$
		
		If $u_n\nrightarrow u_0$, by Lemma \ref{031} there are $u_1,...,u_k$ solutions of $(P_\infty)$ such that
		$$||u_n||_{H_0^1(\Omega)}^2 \longrightarrow ||u_0||_{H_0^1(\Omega)}^2+\sum_{j=1}^{k}||u_j||_{H^1(\mathbb{R}^N)}^2$$
		and
		$$ I(u_n) \longrightarrow I(u_0)+\sum_{j=1}^{k}I_\infty(u_j).$$
		Supposing that $u_0\neq 0$, we arrive at 
		$$I(u_n)\geq (k+1)d_\infty+o_n(1).$$
	Since $k \geq 1$, it follows that 
		$$c\geq (k+1)d_\infty\geq 2d_\infty,$$
	which is absurd, because $c<2d_\infty$. This contradiction allow us to infer that $u_0=0$. Moreover, we must have $k=1$,  because if $k>1$, then
		$$I(u_n) \geq kd_\infty \geq 2d_\infty,$$
	obtaining again a contradiction. From this, the unique possibility is $u_0=0$ and $u_1>0$, and so,  
		$$c+o_n(1) = I(u_n)=I_\infty(u_1)+o_n(1)= d_\infty + o_n(1).$$
		The last equality implies that $c=d_\infty$, which is absurd. This reasoning shows that $u_n \rightarrow u_0$ and the proof is finished.
	\end{proof}	
	
	\section{Existence of positive solution for $(P_0)$}	
	Along this section we show how the technical results of the preceding section imply in the existence of positive solution for $(P_0)$. The key point is to show that the functional $I$ possesses  a $(PS)_c$ sequence in a suitable level $c\in (d_\infty+\varepsilon,2d_\infty-\varepsilon)$, $\varepsilon \approx 0^+$. Bearing this in mind, set
	$$G:=\{\phi_\rho(y);\,|y|\leq R_0\}$$
	and
	$$ H:=\left\{\eta \in C(\mathcal{N}\cap P, \mathcal{N}\cap P);\,\,\eta(u)=u,\,\,\,\text{if}\,\,\,I(u)<\frac{c_0+d_\infty}{2}\right\}.$$
	Hereafter, we are using the same notations introduced in Section 4. Now, fix
	$$\Gamma:=\{\eta(G);\,\eta \in H\}$$
	and 
	$$c:= \inf_{A \in \Gamma}\sup_{u\in A} I(u).$$
	In view of Lemma \ref{046}-$ii)$, as made in \cite{Alves- de Freitas, Benci-Cerami}, we can prove the lemma below.	
	\begin{lemma}
		It holds 
		$$A\cap T_0 \neq \emptyset,\,\,\,\,\forall A \in \Gamma.$$	
	\end{lemma}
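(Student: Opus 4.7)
Fix $A \in \Gamma$, so $A = \eta(G)$ for some $\eta \in H$. The plan is to produce $y \in \overline{B}_{R_0}(0)$ with $\tau(\eta(\phi_\rho(y))) = 0$, which immediately gives $\eta(\phi_\rho(y)) \in \eta(G) \cap T_0$. To achieve this, I will define the finite dimensional map
\[
F\colon \overline{B}_{R_0}(0) \subset \mathbb{R}^N \longrightarrow \mathbb{R}^N, \qquad F(y) := \tau\bigl(\eta(\phi_\rho(y))\bigr),
\]
and apply the Brouwer degree. Continuity of $F$ follows since $y \mapsto \psi_{y,\rho}$ and $y \mapsto t_{y,\rho}$ are continuous (the latter by the implicit characterization of the Nehari projection, as in the proof of Lemma \ref{040}), together with continuity of $\eta$ and of $\tau\colon Y \to \mathbb{R}^N$. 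Note that $G \subset \mathcal{N} \cap P$: indeed, $u_\infty > 0$ and $\phi \geq 0$, and since $\operatorname{supp}\psi_{y,\rho} \subset \Omega$, the identity $I'(\phi_\rho(y))\phi_\rho(y) = I_\infty'(\phi_\rho(y))\phi_\rho(y) = 0$ places $\phi_\rho(y)$ in $\mathcal{N}$.

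The key input is the behavior of $F$ on the boundary $\partial B_{R_0}(0)$. By Lemma \ref{046}(i), for $|y| = R_0$ one has $I(\phi_\rho(y)) < (c_0 + d_\infty)/2$, so by the definition of $H$, $\eta$ acts as the identity on $\phi_\rho(y)$. Hence
\[
F(y) = \tau(\phi_\rho(y)) \quad \text{whenever } |y| = R_0,
\]
and Lemma \ref{046}(ii) yields $\langle F(y), y \rangle > 0$ on $\partial B_{R_0}(0)$.

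I will then close the argument by a standard homotopy: consider $H(t,y) := tF(y) + (1-t)y$ for $t \in [0,1]$ and $y \in \overline{B}_{R_0}(0)$. For $y \in \partial B_{R_0}(0)$,
\[
\langle H(t,y), y \rangle = t\langle F(y), y\rangle + (1-t)R_0^2 > 0,
\]
so $H(t,\cdot)$ does not vanish on $\partial B_{R_0}(0)$ for any $t \in [0,1]$. Homotopy invariance of the Brouwer degree gives $\deg(F, B_{R_0}(0), 0) = \deg(\operatorname{Id}, B_{R_0}(0), 0) = 1$, whence there exists $y_0 \in B_{R_0}(0)$ with $F(y_0) = 0$. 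Setting $u_0 := \eta(\phi_\rho(y_0)) \in \eta(G) = A$, we have $u_0 \in \mathcal{N} \cap P$ and $\tau(u_0) = 0$, i.e. $u_0 \in A \cap T_0$.

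The main obstacle I anticipate is purely book-keeping: verifying that $y \mapsto \phi_\rho(y)$ is continuous into the space in which $\tau$ is defined and $\eta$ acts (so that $F$ is well-defined and continuous), and confirming that the strict inequality in Lemma \ref{046}(i) indeed holds on the boundary $|y| = R_0$ (not merely for $|y|$ strictly larger than $R_0$) so that $\eta$ leaves $\phi_\rho(y)$ fixed there; both points should follow from the continuity of the map $y \mapsto I(\phi_\rho(y))$ and from choosing $R_0$ as in Lemma \ref{046}.
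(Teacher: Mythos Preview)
Your argument is correct and is exactly the standard Brouwer-degree approach from Benci--Cerami that the paper invokes but does not write out; the paper simply says the lemma follows ``as made in \cite{Alves- de Freitas, Benci-Cerami}'' from Lemma~\ref{046}(ii), and your homotopy $H(t,y)=tF(y)+(1-t)y$ together with $\langle F(y),y\rangle>0$ on $\partial B_{R_0}(0)$ is precisely how that step is carried out in those references.
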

	
	Our second result in this section ensures that, for some convenient $\varepsilon>0$,  we must have \linebreak $c\in (d_\infty+\varepsilon,2d_\infty-\varepsilon)$, which is a key step to show the $(PS)_c$ condition of $I$ restricted to $\mathcal{N}$.
	\begin{lemma}
		There exists $\varepsilon >0$ such that $c\in (d_\infty+\varepsilon,2d_\infty-\varepsilon).$
	\end{lemma}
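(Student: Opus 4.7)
The plan is to sandwich $c$ strictly between $d_\infty$ and $2d_\infty$, using the preceding intersection lemma for the lower bound and the set $G$ itself as a competitor for the upper bound, then extract $\varepsilon > 0$ as a minimum of two positive gaps.

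For the lower bound, fix any $A \in \Gamma$. The previous lemma produces $u_A \in A \cap T_0$, so $\sup_{v \in A} I(v) \geq I(u_A) \geq c_0$. Taking the infimum over $A \in \Gamma$ gives $c \geq c_0$, and Lemma \ref{047} supplies $c_0 > d_\infty$. Hence $\varepsilon_1 := (c_0 - d_\infty)/2 > 0$ and $c \geq c_0 = d_\infty + 2\varepsilon_1 > d_\infty + \varepsilon_1$.

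For the upper bound, observe that $\mathrm{supp}\,\phi_\rho(y) \subset \{|x| \geq \rho\} \subset \Omega$, so that each $\phi_\rho(y)$, extended by zero to $\Omega^c$, belongs to $P \subset X$ and the integrals over $\Omega$ agree with the integrals over $\mathbb{R}^N$. In particular $I(\phi_\rho(y)) = I_\infty(\phi_\rho(y))$ and $I'(\phi_\rho(y))\phi_\rho(y) = I'_\infty(\phi_\rho(y))\phi_\rho(y) = 0$, so $G \subset \mathcal{N} \cap P$. Since the identity map on $\mathcal{N} \cap P$ trivially lies in $H$, we have $G = \mathrm{id}(G) \in \Gamma$, and therefore
$$
c \leq \sup_{u \in G} I(u) = \sup_{|y| \leq R_0} I_\infty(\phi_\rho(y)).
$$
Corollary \ref{048} provides $\varepsilon_2 > 0$ and $\rho_0 > 0$ such that this last supremum is strictly less than $2d_\infty - \varepsilon_2$ whenever $\rho \in (0, \rho_0)$. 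Setting $\varepsilon := \min\{\varepsilon_1, \varepsilon_2\} > 0$ yields $c \in (d_\infty + \varepsilon, 2d_\infty - \varepsilon)$.

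The step requiring care is verifying that $G$ is truly admissible, that is, that the test functions $\phi_\rho(y)$ — originally constructed as points of the limit Nehari set $\mathcal{N}_\infty$ — do lie in $\mathcal{N} \cap P$ and not merely in $\mathcal{N}_\infty$. This hinges on the support condition built into the cut-off $\phi$, which simultaneously gives the equality $I|_G = I_\infty|_G$ needed to invoke Corollary \ref{048}. Once this identification is in place, the lower bound from the intersection property and the upper bound from the concentration estimate on $\phi_\rho(y)$ assemble mechanically into the claim.
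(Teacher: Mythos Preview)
Your proof is correct and follows essentially the same route as the paper: the lower bound $c\geq c_0>d_\infty$ comes from the intersection lemma $A\cap T_0\neq\emptyset$ combined with Lemma~\ref{047}, and the upper bound comes from testing with $G$ via the identity map in $H$ and invoking Corollary~\ref{048}. Your treatment is in fact more explicit than the paper's at the one point that deserves attention---namely, you spell out why $G\subset\mathcal{N}\cap P$ (using the support of the cut-off $\phi$ to identify $I$ with $I_\infty$ on $G$), a verification the paper leaves implicit.
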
	
	\begin{proof}
		Using the preceding lemma, for each $A \in \Gamma$ there exists $u_0 \in A\cap T_0$. Therefore,
		$$c_0 = \inf_{u \in T_0} I(u)\leq I(u_0) \leq \sup_{u\in A} I(u),$$
		and so,
		$$
		c_0\leq c.
		$$
		Take $\varepsilon \in (0, \frac{d_\infty}{2})$, $\varepsilon \approx 0^+$, such that
		\begin{equation}\label{049}
		d_\infty+\varepsilon < c_0 \leq c,
		\end{equation}
		which is possible in view of Lemma \ref{047}. On the other hand, since 
		$$c\leq \displaystyle{\sup_{u\in A}} I(u),\,\,\,\forall A \in \Gamma,$$
		we know that, 
		$$c\leq \sup_{\phi_\rho(y) \in G} I(\eta(\phi_\rho(y))),\,\,\,\forall \eta \in H.$$
		Choosing $\eta:=Id_{(\mathcal{N}\cap P)}$ and applying the Corollary \ref{048}, one finds 
				$$c < 2d_\infty-\varepsilon,$$
	for $\varepsilon$ and $\rho$ small enough. This combines with (\ref{049}) to give 
		$$c\in (d_\infty+\varepsilon,2d_\infty-\varepsilon).$$
	\end{proof}	
	Now we are able to prove that the problem $(P_0)$ has a positive solution.
	\\
	\\
	\textbf{Proof of Theorem \ref{053}:} Combining the preceding lemma with the Proposition \ref{050}, it suffices to show that $I|_{\mathcal{N}}$ has a $(PS)_c$ sequence in $P$. More precisely, we will prove that the following condition holds:\\
	\\
	\textbf{(D)}: For each $\lambda \in (0, c-\frac{c_0+d_\infty}{2})$, there exists $u_{\lambda} \in I^{-1}([c-\lambda, c+\lambda])$ with $u_\lambda \in \mathcal{N}\cap P$ and
	$$||I'(u_\lambda)||_*<\lambda.$$
	
	Arguing by contradiction, we find $\lambda_0\in (0, c-\frac{c_0+d_\infty}{2})$ such that
	$$||I'(u_\lambda)||_*\geq\frac{\lambda_0}{2},\,\,\,\forall u \in I([c-\lambda_0, c+\lambda_0])\cap (\mathcal{N}\cap P).$$
	By applying the version of quantitative deformation lemma in \cite{Willem}, we get \linebreak $\eta \in C([0,1]\times \mathcal{N}\cap P,\, \mathcal{N}\cap P)$ satisfying\\
	$i):$ $\eta(t,u) = u,\,\,\, \forall u \in I^{-1}([c-\lambda_0,c+\lambda_0]);$\\
	$ii):$ $\eta(1,I^{c+\frac{\lambda_0}{2}}) \subset I^{c-\frac{\lambda_0}{2}},$
	with $I^{d}:=\{u \in \mathcal{N}\cap P;\, I(u)\leq d\}.$
	
	By the definition of $c$, it holds
	$$\sup_{u\in A_0} I(u)\leq c+\frac{\lambda_0}{2},$$
	for some $A_0 \in \Gamma$, that is,
	$$ A_0 \in I^{c+\frac{\lambda_0}{2}}.$$
	Then, by item $ii)$,
	\begin{equation}\label{051}
	\eta(1, A_0) \in I^{c-\frac{\lambda_0}{2}}.
	\end{equation}
	Note that $A_0=\eta_0(G)$ for some $\eta_0 \in H$. Setting $\gamma_1:=\eta(1,\cdot)\circ \eta_0$ we derive that $\gamma_1 \in C(\mathcal{N}\cap P, \mathcal{N}\cap P)$ and, if $I(u) <\frac{c_0+d_\infty}{2}$, 
	$$\gamma_1(u)=\eta(1,\eta_0(u))=u$$
	(Note that $c-\lambda_0>\frac{c_0+d_\infty}{2}$). Thus, $\gamma_1 \in H$ and  
	$$  \eta_(1,A_0)=\eta(1,\eta_0(G))=\gamma_1(G)\in \Gamma.$$
	Consequently, by (\ref{051}),
	$$c\leq \sup_{u\in \eta(1, A_0)} I(u) \leq c-\lambda_0.$$
	This contradiction completes the proof. \hspace{8 cm} $\square$   
	\section{The Neumman case}
In this section, we study the existence of solution for the following class of problems
	$$\left\{\begin{aligned}
	-&\Delta u + u= Q(x)u\log u^2,\,\,\,\text{in}\,\,\,\Omega\\
	&\frac{\partial u}{\partial \eta}=0,\,\,\,\text{in}\,\,\,\partial \Omega,
	\end{aligned}
	\right.
	\leqno{(S_0)}
	$$
where $\Omega$ is an exterior domain  as in the problem $(P_0)$, and $Q:\mathbb{R}^N\longrightarrow \mathbb{R}$ is a continuous function satisfying the following conditions:\\ 
	\begin{itemize}
	\item [$(Q_1)$] $\displaystyle{\lim_{|x|\rightarrow \infty}} Q(x) = Q_0$ and  $q_0:=\displaystyle{\inf_{x\in \mathbb{R}^N}}\,Q(x)>0$ for all $x\in \mathbb{R}^N$;\\
	\item [$(Q_2)$] $Q_0\geq Q(x)\geq Q_0-Ce^{-M|x|^2}$, for $x\geq R_0$, $M\geq M_0$,\\
	\end{itemize}
	with $Q_0, C, M_0$, $R_0>0$.
	
The reader will see in this section that different of the Dirichlet case, we will prove that if $M_0>0$ is large enough, then the Problem $(S_0)$ has a ground state solution.

Let $(E,||\cdot||_E)$ be a Banach space and $d\in\mathbb{R}$. We recall that a  \textit{Cerami sequence} for a functional $J\in C^1(E,\mathbb{R})$ at level $d$ (shortly $(C)_d$-\textit{sequence}), is a sequence $(u_n) \subset E$ satisfying
	$$J(u_n)\longrightarrow d\,\,\, \text{and}\,\,\, (1+||u_n||_E)||J'(u_n)||_{E'}\longrightarrow 0.$$
	We say that $J$ verifies the Cerami condition at level $d$, or $(C)_d$-condition for short, if each $(C)_d$-sequence for $J$ admits a convergent subsequence. Note that a $(C)_d$-sequence for $J$ is also a $(PS)_d$-sequence. Therefore, if $u_n\rightarrow u_0$ and $(u_n)$ is a $(C)_d$-sequence, then $u_0$ is a critical point of $J$. See \cite{Cerami} for further details.

Hereafter, we will need of the auxiliary problem below
	$$\left\{\begin{aligned}
	-&\Delta u + u= Q_0u\log u^2,\,\,\,\text{in}\,\,\,\mathbb{R}^N\\
	&u\in H^1(\mathbb{R}^N).
	\end{aligned}
	\right.
	\leqno{(S_\infty)}
	$$
	Note that, in view of the condition $(Q_1)$, the problem $(S_\infty)$ is the limit problem of $(S_0)$.

	 Applying the Theorem \ref{1}, by a change of variable, we get the uniqueness of positive solution for $(S_\infty)$. In fact, if $u_1$ is a solution for \eqref{problemalimite}, by defining $v_1(x):= u_1(\sqrt{k^{-1}}x)$, by a direct computation, we find 
	 	$$-\Delta v_1=-v_1+\frac{1}{k}v_1\log v_1^2\,\,\,\text{in}\,\,\,\mathbb{R}^N.$$
	 	So, we get the existence and uniqueness of positive solution for $(S_\infty)$ by choosing $k=Q_0^{-1}$. 
    
 		From  now on, we may assume that, up to translations, the problem $(S_\infty)$ has a unique positive solution of the form
 	    \begin{equation}\label{055}
     		v_\infty(x)=C_1e^{-C_2|x|^2}, \quad \forall x \in \mathbb{R}^N,
        \end{equation}
	 for convenient $C_1$, $C_2>0$.
	 
	 Related with the problems $(S_0)$ and $(S_\infty)$ we have the energy functionals
	 $$J(u):=\frac{1}{2}\int_\Omega(|\nabla u|^2+(1+Q(x))|u|^2)+ \int_\Omega Q(x)F_1(u)-\int_\Omega Q(x)F_2(u),\,\,\,\, \forall u \in Z,$$
	 and
	 $$J_\infty(u):=\frac{1}{2}\int_{\mathbb{R}^N}(|\nabla u|^2+(1+Q_0)|u|^2)+ \int_{\mathbb{R}^N} Q_0F_1(u)-\int_{\mathbb{R}^N}Q_0 F_2(u),\,\,\,\, \forall u \in Y,$$
	 with $Z:=(H^1(\Omega)\cap L^{F_1}(\Omega),||\cdot||_Z)$, $||\cdot||_Z:=||\cdot||_{H^1(\Omega)}+||\cdot||_{L^{F_1}(\Omega)}$, and $Y$ is chosen as in the previous sections. Thus, $J\in C^1(Z,\mathbb{R})$, $J_\infty \in C^1(Y,\mathbb{R})$ and critical points of $J$ and $J_\infty$ correspond respectively to solutions of $(S)$ and $(S_\infty)$. 
	 
	 The Nehari sets associated with the functionals $J$ and $J_\infty$ respectively are given by
	 $$\mathcal{M}:=\{u \in Z-\{0\};\,J'(u)u=0\}$$
	 and
	 $$\mathcal{M}_\infty:=\{u \in Y-\{0\};\,J_\infty'(u)u=0\}.$$
	 Arguing as in the proof of Proposition \ref{07}, we also derive that the sets $\mathcal{M}$ and $\mathcal{M}_\infty$ are $C^1$-manifolds. Indeed, it suffices to replace $\Psi_0$ and $\Psi_\infty$ in the proof of Proposition \ref{07} by 
	 $$\tilde{\Psi}_0(u)=J(u)-\frac{1}{2}\int_{\Omega}Q(x)|u|^2\,\,\,\text{and}\,\,\, \tilde{\Psi}_\infty(u)=J_\infty(u)-\frac{1}{2}\int_{\mathbb{R}^N}Q_0|u|^2,$$
	 respectively. From now on, we will denote by $l_0$ and $l_\infty$ the levels
	 $$l_0:=\inf_{u\in \mathcal{M}} J(u)\,\,\,\text{and}\,\,\,l_\infty:=\inf_{u \in \mathcal{M}_\infty} J_\infty(u).$$
	 It is not difficulty to prove that the function $v_\infty$ given in (\ref{055}) satisfies 
	 \begin{equation}\label{056}
	 J_\infty(v_\infty)=l_\infty.
	 \end{equation}
	 The next result is a version of Lemma \ref{031} for the $(C)_d$-sequences of the functional $J$.
	 
	\begin{lemma}\label{057}\label{067}
	 	Let $(v_n)$ be a $(C)_d$-sequence for $J$. Assume that $v_n \rightharpoonup v_0$. Then, going to a subsequence if necessary, either
	 	\begin{description}
	 		\item [$i)$] $v_n \rightarrow v_0$ in $Z$, or
	 		\item [$ii)$] There exists $k\in\mathbb{N}$ and $k$ nontrivial solutions $v_j$  of $(S_\infty)$, $j\in \{1,...,k\}$, satisfying
	 		\begin{equation*}
	 		\left\|v_n - v_0-\sum_{j=1}^{k}v_n^j\right\|_{H^1(\Omega)}^2=o_n(1)\,\,\,\,\text{and}\,\,\,\,J(u_n)\rightarrow J(v_0)+\sum_{j=1}^{k} J_\infty(u_j),
	 		\end{equation*}
	 	\end{description}
 	with $v_n^j:=v_j(\cdot\,\,-y_n^j)$, and $(y_n^j) \subset \mathbb{R}^N$ with $|y_n^j|\rightarrow \infty$ for each $j\in \{1,...,k\}$.	
	 \end{lemma}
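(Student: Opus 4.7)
The plan is to mirror the concentration-compactness decomposition from Lemma \ref{031}, but adapted to (i) a Cerami sequence for the unrestricted functional $J$, (ii) the Neumann boundary condition, which forces us to work in $H^1(\Omega)$ rather than $H_0^1(\Omega)$, and (iii) the weight $Q(x)$, which has to be exchanged with the constant $Q_0$ whenever a concentration profile escapes to infinity.

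First I would establish that any Cerami sequence $(v_n)\subset Z$ at level $d$ is bounded. The identity
$$
J(v_n)-\tfrac12 J'(v_n)v_n=\tfrac12\int_\Omega Q(x)|v_n|^2=d+o_n(1),
$$
together with the Cerami property $(1+\|v_n\|_Z)\|J'(v_n)\|\to0$ and condition $(Q_1)$, bounds $\|v_n\|_2$. The logarithmic Sobolev inequality (\ref{042}) applied with a small parameter then bounds $\|\nabla v_n\|_2$, and the explicit form of $J(v_n)$ together with $(P_2)$ bounds $\int_\Omega Q(x) F_1(v_n)$, and hence $\|v_n\|_{F_1}$ via the $\Delta_2$-condition. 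Along a subsequence $v_n\rightharpoonup v_0$ in $Z$, and the convexity of $F_1$ together with the subcritical growth of $F_2'$ imply $J'(v_0)=0$, so $v_0$ is a solution of $(S_0)$.

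If $v_n\to v_0$ strongly we are in case $i)$. Otherwise, using a fixed bounded linear extension $E:H^1(\Omega)\to H^1(\mathbb{R}^N)$ (available since $\partial\Omega$ is smooth), set $\psi_n^1:=E(v_n-v_0)$, which satisfies $\psi_n^1\rightharpoonup 0$ in $H^1(\mathbb{R}^N)$ and is bounded in $L^{F_1}(\mathbb{R}^N)$. Applying Lemma \ref{Brezis-Lieb} to the logarithmic term, Proposition \ref{BL} to $F_1$, and the uniform decay $|Q(x)-Q_0|\le Ce^{-M_0|x|^2}$ from $(Q_2)$, I would obtain the Brezis--Lieb type splittings
$$
J(v_n)=J(v_0)+J_\infty(\psi_n^1)+o_n(1),\qquad \|v_n\|_{H^1(\Omega)}^2=\|v_0\|_{H^1(\Omega)}^2+\|\psi_n^1\|_{H^1(\mathbb{R}^N)}^2+o_n(1).
$$
The analogue of Claim \ref{024} — now applied to the weighted nonlinearities $Q(x)F_i'$ in place of $F_i'$ — then delivers the almost-critical property
$$
\sup_{y\in\mathbb{R}^N}\bigl|J'_\infty(\psi_n^1)\,\phi(\cdot-y)\bigr|=o_n(1),\qquad\forall\,\phi\in C_0^\infty(\mathbb{R}^N),\;\|\phi\|_Y\le1.
$$

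From here the argument runs exactly as in Lemma \ref{031}: assuming $\psi_n^1\not\to 0$, a covering of $\mathbb{R}^N$ by unit hypercubes combined with the $L^p$-lower bound forced by the almost-criticality produces a sequence $(y_n^1)$ with $|y_n^1|\to\infty$ (divergence following from $\psi_n^1\rightharpoonup0$) and a weak limit $v_1\neq 0$ of $\psi_n^1(\cdot+y_n^1)$ in $Y$; the almost-criticality together with translation invariance of $J_\infty$ forces $J_\infty'(v_1)=0$, so $v_1$ is a nontrivial solution of $(S_\infty)$. Setting $\psi_n^2:=\psi_n^1(\cdot+y_n^1)-v_1$ and iterating yields the claimed decomposition, and the process terminates after finitely many steps because each $v_j$ contributes at least $l_\infty>0$ to the energy sum while $J(v_n)$ is bounded. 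The main technical obstacle is the transition from $J$ to $J_\infty$: the discrepancy is governed by $Q(x)-Q_0$, and the Gaussian-type decay supplied by $(Q_2)$ is exactly what makes the error integrals $\int(Q(x)-Q_0)F_i'(\psi_n^1)\phi(\cdot-y)$ and $\int(Q(x)-Q_0)|\psi_n^1|^2\log|\psi_n^1|^2$ uniformly small in $y$; this is where the hypothesis on $M_0$ enters, and why Theorem \ref{068} requires $M_0$ to be sufficiently large.
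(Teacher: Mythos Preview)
Your overall strategy is correct, but there are two issues worth flagging.

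First, and most importantly, your final paragraph misidentifies where the hypotheses are used. The compactness decomposition of this lemma requires only $(Q_1)$, not $(Q_2)$, and in particular it does not depend on the size of $M_0$. The point is that once you have located a concentration centre $y_n^1$ with $|y_n^1|\to\infty$, the test functions you pair against are translates $\phi(\cdot-y_n^1)$ with fixed compact support, so on their support $Q(\cdot)\to Q_0$ uniformly by $(Q_1)$ alone; no decay rate is needed. In the paper, the Gaussian bound $(Q_2)$ and the largeness of $M_0$ enter only later, in Lemma~\ref{065}, where one must compare explicit exponential rates to prove the strict inequality $l_0<l_\infty$. So your sentence ``this is where the hypothesis on $M_0$ enters'' is a misattribution.

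Second, a technical gap: you introduce a Sobolev extension $E:H^1(\Omega)\to H^1(\mathbb{R}^N)$ and set $\psi_n^1:=E(v_n-v_0)$, but then assert the splitting $\|v_n\|_{H^1(\Omega)}^2=\|v_0\|_{H^1(\Omega)}^2+\|\psi_n^1\|_{H^1(\mathbb{R}^N)}^2+o_n(1)$. This need not hold: $\|\psi_n^1\|_{H^1(\mathbb{R}^N)}^2$ contains $\int_{\Omega^c}|\nabla\psi_n^1|^2$, and for a generic bounded extension operator there is no reason this term is $o_n(1)$ (weak convergence in $H^1(\Omega)$ does not force the gradient of the extension to vanish on the compact set $\Omega^c$). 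The same issue contaminates the energy splitting $J(v_n)=J(v_0)+J_\infty(\psi_n^1)+o_n(1)$. The paper avoids this by never extending: it keeps $\xi_n^1:=v_n-v_0$ on $\Omega$, works with the translated domains $\Omega_n^1=\Omega-y_n^1$ and the translated functionals $J_n$ on $H^1(\Omega_n^1)\cap L^{F_1}(\Omega_n^1)$, and only passes to $\mathbb{R}^N$ in the limit via $\Omega_n^1\to\mathbb{R}^N$. Along the way the relevant Sobolev constants on $B_i\cap\Omega$ are uniform thanks to the uniform cone property. If you want to keep the extension route you would need to argue separately that the contribution on $\Omega^c$ vanishes, or choose an extension adapted to that.
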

 	\begin{proof}
 	The proof is a slight variant of the argument made in Lemma \ref{031} (see also the ideas in \cite[Lemma 3.3]{Alves-Carriao-Medeiros} and \cite[Lemma 3.1]{Benci-Cerami}). In fact, since $(v_n)$ is $(C)_d$-sequence for $J$, it holds $J'(v_n)v_n=o_n(1)$. So, it is possible to prove that $(v_n)$ is bounded in the same way of the proof of Lemma \ref{031}. From this, it follows that $(v_n)$ is a bounded $(PS)_d$ sequence for $J$. Accounting that $v_n \rightharpoonup v_0$, we derive that $J'(v_0)=0$, and so, $v_0$ is a solution of $(S_0)$. Following the ideas in the proof of Lemma \ref{031}, setting
 	$$
 	\xi_n^1(x):=
 	v_n(x)-v_0(x),\,\,\,\text{in}\,\,\,\Omega;
 	$$
 	we find that 
 	$$\xi_n^1 \rightharpoonup 0\,\,\,\text{in}\,\,\, Z.$$ 
 	Then, if $\xi_n^1 \rightarrow 0$ in $Z$, the proof would be finished. Otherwise, if $\xi_n^1 \not\rightarrow 0$ in $Z$, arguing as in the proof of Lemma \ref{031}, see items $i)-ii)$, we find
 	\begin{equation}\label{069}
 	J(\xi_n^1)=J(v_n)-J(v_0)+o_n(1)
 	\end{equation}
 	and
 	\begin{equation}\label{072}
 	J'(\xi_n^1)\xi_n^1=J'(v_n)v_n-J'(v_0)v_0+o_n(1).
 	\end{equation}
 	
 	In the same line of Lemma \ref{031}, let us consider $(y_n^1)_{n \in \mathbb{N}}$ in $\mathbb{R}^N$, with $y_n^1$ the centers of unit $N$-dimensional hypercubes $B_i$, $\mathbb{R}^N=\displaystyle{\bigcup_{i\in \mathbb{N}}}B_i$, and verify 
 	$$||\xi_n^1||_{L^p(\tilde{B_i})}^p=\max_{j\in \mathbb{N}} ||\xi_n^1||_{L^p(\tilde{B_j})}^p:=\delta_n,$$ 
 	where $\tilde{B}_i=(B_i\cap\Omega)$. Next, we are going to guarantee that
 	$$\delta_n\geq\tau_0 >0,\,\,\, n\geq n_0,$$
 	for some $n_0 \in \mathbb{N}$, and 
 	$$|y_n^1|\rightarrow \infty.$$
 	In the sequel, we set
 	$$\tilde{\xi}_n(x)=\xi_n^1(x+y_n^1), \quad \Omega_n^1=\{y-y_n^1;\, y\in \Omega\},\,\,\,X_n:=H^1(\Omega_n^1)\cap L^{F_1}(\Omega_n^1)$$
 	and the functional $J_n:X_n\longrightarrow \mathbb{R}$ given by
 	$$J_n(u):=\frac{1}{2}\int_{\Omega_n^1}(|\nabla u|^2+(1+Q(x+y_n^1))|u|^2)+ \int_{\Omega_n^1} Q(x+y_n^1)F_1(u)-\int_{\Omega_n^1} Q(x+y_n^1)F_2(u), \quad u\in X_n.$$
 	The following claim holds.
 	\begin{claim}
 		The sequence $\tilde{\xi}_n$ is such that
 		\begin{equation}\label{071}
 		J_n(\tilde{\xi_n})\geq \tau_1>0,
 		\end{equation}
 		for some $\tau_1\in \mathbb{R}$.
 	\end{claim} 
 	It suffices to show that
 	$$\inf_{n \in \mathbb{N}}\left(\frac{1}{2}\int_{\Omega_n^1}(|\nabla \tilde{\xi}_n|^2+(1+Q(x+y_n^1))|\tilde{\xi}_n|^2)+ \int_{\Omega_n^1} Q(x+y_n^1)F_1(\tilde{\xi}_n)-\int_{\Omega_n^1} Q(x+y_n^1)F_2(\tilde{\xi}_n)\right)>0.$$
 	Arguing as in the Claim \ref{029}, by considering (\ref{072}) and the condition $(Q_1)$, we find
 	$$ J_n(\tilde{\xi}_n)=\int_{\Omega_n^1}Q(x+y_n^1)|\tilde{\xi}_n|^2+o_n(1)\geq q_0\int_{\Omega_n^1}|\tilde{\xi}_n|^2+o_n(1).$$
 	Now, if $J_n(\tilde{\xi}_n)=o_n(1)$, then it would have $||(\chi_{\Omega_n^1}\tilde{\xi}_n)||_{L^2(\mathbb{R}^N)}^2=o_n(1)$, and so $\displaystyle{\int_{\mathbb{R}^N}}|\chi_{\Omega_n^1}\tilde{\xi}_n|^p=o_n(1)$, for a fixed $p\in (2,2^*]$, by an interpolation argument. From this, by the properties on $F_2$ (\textit{vide} $(P_2)$ above), it follows that
 	$$\int_{\Omega_n^1}F'_2(\tilde{\xi}_n)\tilde{\xi}_n=\int_{\mathbb{R}^N}F'_2(\chi_{\Omega_n^1}\tilde{\xi}_n)\chi_{\Omega_n^1}\tilde{\xi}_n=o_n(1).$$
 	Therefore,
 	$$\int_{\Omega_n^1}(|\nabla \tilde{\xi}_n|^2+(1+Q(x+y_n^1))|\tilde{\xi}_n|^2)+ \int_{\Omega_n^1} Q(x+y_n^1)F'_1(\tilde{\xi}_n)\tilde{\xi}_n=o_n(1).$$
 	Equivalently, by a change of variable,
 	$$\int_{\Omega}(|\nabla \xi_n|^2+(1+Q(x))|\xi_n|^2)+ \int_{\Omega} Q(x)F'_1(\xi_n)\xi_n=o_n(1),$$
 	contradicting the fact that $\xi_n \nrightarrow 0$. The proof of the claim is completed.
 	
 	In the same line of Lemma \ref{031}, we are able to show that the next claim holds.
 	
 	\begin{claim} There exist $\tau_0>0$ and $n_0 \in \mathbb{N}$ such that
 		$$\delta_n \geq \tau_0,\,\,\,n\geq n_0.$$
 	\end{claim}
 	
 	Take into accounting the inequality in (\ref{071}), the proof of the claim follows by reasoning as made in Claim \ref{029}. However, we would like point out an important fact related with the proof of the Claim \ref{029}. The inequality in (\ref{070}) plays a crucial role in the proof of Claim \ref{029}. Such inequality is based in the fact that the constant associated with the embedding
 	$$H^1(B_i) \hookrightarrow L^p(B_i)$$
 	are independent of $i$. In the current proof a similar property also holds, more precisely
 	$$H^1(\tilde{B}_i) \hookrightarrow L^p(\tilde{B}_i),$$
 	since the sets $\tilde{B}_i=(B_i\cap \Omega)$ verify the uniform cone property (see \cite{Adams1}). 
	
	The preceding claim assures that
	$$|y_n^1|\longrightarrow \infty.$$ 
	In fact, otherwise, it would be possible to find $R>0$, such that
	$$\int_{(B_R(0)\cap \Omega)}|\xi_n^1|^p\geq \int_{\tilde{B}_i}|\xi_n^1|^p= \delta_n^p\geq \tau_0^p>0.$$
	This contradicts the convergence
	$$\int_{(B_R(0)\cap\Omega)} |\xi_n^1|^p \longrightarrow 0,$$
	which is valid in view of the weak convergence $\xi_n^1 \rightharpoonup 0$ in $Z$. Thus, hereafter we will assume that $|y_n^1|\rightarrow \infty.$
	
 	Now, since $y_n^1\rightarrow \infty$, we know that $\Omega_n^1 \rightarrow \mathbb{R}^N$, as $n\rightarrow \infty$, (in the sense of the characteristic functions $\chi_{\Omega_n^1}\rightarrow 1$ a.e. in $\mathbb{R}^N$) for each $R>0$, there exists $m_0\in \mathbb{N}$ such that $B_R(0) \subset \Omega_n^1$, $n\geq m_0$. Considering that $(\xi_n^1)$ is a bounded sequence, it is possible to find $v_1\in Y\setminus \{0\}$ satisfying
 	$$\tilde{\xi_n} \rightharpoonup v_1 \,\,\,\text{in}\,\,\, H^1(B_R(0))\cap L^{F_1}(B_R(0)),$$
 	for each $R>0$ fixed. Fixed $\phi \in C_0^\infty(\Omega)$, inasmuch as $|y_n^1|\rightarrow \infty$, we know that, for some $m_1\in \mathbb{N}$, it holds 
 	$$\text{supp}\,\phi(\cdot-y_n^1)\subset \Omega,\,\,\,n\geq m_1.$$ 
 	Hence,  $\phi^{(y_n^1)}:=\phi(\cdot-y_n^1)\in C_0^{\infty}(\Omega)$ for $n\geq m_1$.  
 	
 	By exploring the ideas in the proof of Lemma \ref{031}-$ii)$, we derive
  	$$\sup_{n\in \mathbb{N}}\,\left(|J'(\xi_n)|\cdot||\phi(\cdot\,-y_n^1)||_Z\right)=o_n(1).$$

 	By combining these information with the properties $(Q_1)$ and (\ref{070}) above, we derive that $v_1$ is a nontrivial solution of $(S_\infty)$. Set
 	$$\xi_n^2:=\xi_n^1-v_1(\cdot-y_n^1),\,\,\,\text{in}\,\,\,\Omega.$$
 	Hence, we can repeat the preceding steps made with $\xi_n^1$. Following this procedure, the reasoning made in final of Lemma \ref{031} allow us to get a $k\in \mathbb{N}$ and unbounded sequences $(y_n^1),...,(y_n^k)$  in $\mathbb{R}^N$ in such way that
 	$$\xi_n^j:=\xi_n^{j-1}(\cdot+y_n^{j-1})-v_{j-1}\rightharpoonup 0,\,\,\,\text{in}\,\,\, Y,$$
 	with $v_{j-1}$ a nontrivial solution of $(S_\infty)$, $\xi_n^{k+1} \rightarrow 0$, as $n\rightarrow \infty$, $j\in \{2,...,k\}$. Setting $v_j:=v_j(\cdot-y_n^j)$, these facts assure that
 	$$\left\|v_n - v_0-\sum_{j=1}^{k}v_n^j\right\|_{H^1(\Omega)}^2=o_n(1)$$
 	as well as
 	$$J(u_n)\longrightarrow J(v_0)+\sum_{j=1}^{k} J_\infty(u_j).$$
 	\end{proof}
 	An immediate consequence of the preceding lemma is following corollary.
 	\begin{corollary}\label{064}
 		The functional $J$ verifies the $(C)_d$-condition for $d\in(0,l_\infty)$.
 	\end{corollary}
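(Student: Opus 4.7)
The plan is to deduce the corollary directly from the concentration-compactness decomposition established in Lemma \ref{067}. Given a Cerami sequence $(v_n) \subset Z$ for $J$ at level $d \in (0, l_\infty)$, I would first note that the proof of Lemma \ref{067} already shows $(v_n)$ is bounded in $Z$, so up to a subsequence $v_n \rightharpoonup v_0$ in $Z$. Applying Lemma \ref{067} then yields that either $v_n \to v_0$ in $Z$, in which case we are done, or alternative $ii)$ holds with some integer $k \geq 1$ and nontrivial solutions $v_1, \ldots, v_k$ of $(S_\infty)$ satisfying
$$J(v_n) \longrightarrow J(v_0) + \sum_{j=1}^{k} J_\infty(v_j).$$

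The strategy is to rule out alternative $ii)$ by producing a lower bound on the right hand side that contradicts $d < l_\infty$. First, each $v_j$ is a nontrivial solution of $(S_\infty)$, hence $v_j \in \mathcal{M}_\infty$, and so $J_\infty(v_j) \geq l_\infty$ for every $j \in \{1, \ldots, k\}$. Second, since $(v_n)$ is a bounded Cerami sequence we have $J'(v_n) \to 0$ in $Z'$, so passing to the weak limit gives $J'(v_0) = 0$. Using the identity
$$J(v_0) - \tfrac{1}{2} J'(v_0) v_0 = \tfrac{1}{2} \int_{\Omega} Q(x) |v_0|^2,$$
which follows from a direct computation with the decomposition $F_2(t) - F_1(t) = \tfrac{1}{2} t^2 \log t^2$ and the hypothesis $Q \geq q_0 > 0$, I obtain $J(v_0) \geq 0$. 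Combining these two facts yields
$$d = J(v_0) + \sum_{j=1}^{k} J_\infty(v_j) \geq 0 + k \, l_\infty \geq l_\infty,$$
which contradicts the assumption $d < l_\infty$. Therefore alternative $ii)$ cannot occur and $v_n \to v_0$ in $Z$, establishing the $(C)_d$-condition.

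I do not expect any substantial obstacle here: the heavy lifting was done in Lemma \ref{067}, and the corollary is essentially a bookkeeping argument in the spirit of Benci--Cerami. The only point requiring a brief verification is the nonnegativity $J(v_0) \geq 0$ for critical points $v_0$ of $J$, which relies on the algebraic identity above together with $Q > 0$; this is standard and follows immediately from the formulas for $F_1, F_2$ and their derivatives.
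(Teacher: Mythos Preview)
Your proposal is correct and follows essentially the same approach as the paper: both invoke Lemma~\ref{067} after extracting a weak limit, then rule out alternative~$ii)$ using $d<l_\infty$. The paper's proof is more terse---it simply asserts that ``the hypothesis $d\in(0,l_\infty)$ combined with the items $i)$--$ii)$ of the preceding lemma gives the required result''---whereas you spell out the key estimate $J(v_0)\geq 0$ via the identity $J(v_0)-\tfrac{1}{2}J'(v_0)v_0=\tfrac{1}{2}\int_\Omega Q(x)|v_0|^2$ and the bound $J_\infty(v_j)\geq l_\infty$, which is exactly what the paper leaves implicit.
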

 	\begin{proof}
 		Let $(v_n)$ be a $(C)_d$-sequence, with $d\in (0,l_\infty)$. In particular,
 		$$J'(v_n)v_n = o_n(1),$$
 	and	so, using the same ideas explored in the begin of the proof of Lemma \ref{050}, we derive that $(v_n)$ is a bounded sequence in $Z$ and, going to a subsequence if necessary, it holds $v_n \rightharpoonup v_0$, for some $v_0\in Z$. Since $(v_n)$ is a $(C)_d$-sequence, we have $J'(v_0)=0$. Now, it is sufficient to observe that the hypothesis $d\in (0,l_\infty)$ combined with the items $i)-ii)$ of the preceding lemma gives the required result.
 	\end{proof}
 	
 	We are going to show that $J$ has a ground state solution, i.e., a positive solution $v_0$ satisfying $J(v_0)=l_0$. We start by showing that the functional $J$ satisfies the mountain geometric (see e.g \cite[Section 2.3]{Willem}).
 	
 	\begin{lemma}
 		The functional $J$ verifies the Mountain Pass geometry, i.e.,
 		\begin{itemize}
 		\item [$i)$] $J(0)=0$ and there exist $r$, $\rho_0>0$ such that $J_{\partial B_r(0)}\geq \rho_0$;
 		\item [$ii)$] There exits $v$, $||v||_Z> r$, and $J(v)\leq J(0)=0$.  	
 		\end{itemize}
 	\end{lemma}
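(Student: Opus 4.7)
The plan for $i)$ is a standard Mountain Pass lower bound, adapted to the product norm on $Z=H^1(\Omega)\cap L^{F_1}(\Omega)$. Clearly $J(0)=0$. By $(Q_1)$, continuity together with the limit at infinity gives that $Q$ is bounded on $\mathbb{R}^N$ with $q_0\leq Q(x)\leq Q_{\max}$. Using $F_1\geq 0$ from $(P_1)$, the subcritical estimate $|F_2(s)|\leq C|s|^p$ for some $p\in(2,2^*)$ (obtained by integrating $(P_2)$), and the Sobolev embedding $H^1(\Omega)\hookrightarrow L^p(\Omega)$ (valid for exterior domains since they satisfy the cone property), I would derive
\begin{equation*}
J(u)\geq \tfrac{1}{2}||u||_{H^1(\Omega)}^2+q_0\int_\Omega F_1(u)-C_1||u||_{H^1(\Omega)}^p,\,\,\,\forall u \in Z.
\end{equation*}

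The main technical point is that, since the sphere $\partial B_r(0)\subset Z$ is measured in the product norm, $||u||_{H^1(\Omega)}$ can be arbitrarily small there, so the $H^1$-estimate alone is insufficient. I plan to overcome this via the modular inequality (\ref{In}): since $1<l\leq 2$ in Proposition \ref{4}, for $||u||_{L^{F_1}(\Omega)}\leq 1$ one has $\int_\Omega F_1(u)\geq||u||_{L^{F_1}(\Omega)}^2$. Restricting to $||u||_Z=r$ with $r\in(0,1)$ sufficiently small forces both $||u||_{H^1(\Omega)}\leq r$ and $||u||_{L^{F_1}(\Omega)}\leq r$, so the subcritical term is absorbed as $-C_1||u||_{H^1(\Omega)}^p\geq -\tfrac{1}{4}||u||_{H^1(\Omega)}^2$. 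Combined with the elementary inequality $a^2+b^2\geq\tfrac{1}{2}(a+b)^2$, this yields
\begin{equation*}
J(u)\geq c\bigl(||u||_{H^1(\Omega)}^2+||u||_{L^{F_1}(\Omega)}^2\bigr)\geq\tfrac{c}{2}||u||_Z^2=\tfrac{c}{2}r^2=:\rho_0>0,
\end{equation*}
where $c:=\min\{1/4,q_0\}$, establishing $i)$.

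Part $ii)$ will follow from the $t^2\log t^2$ growth of the logarithmic term. I would fix any $v_0\in C_0^\infty(\Omega)\setminus\{0\}$ with $v_0\geq 0$, so that $v_0\in Z$ automatically. Using the identity $F_2(s)-F_1(s)=\tfrac{1}{2}s^2\log s^2$ to rewrite the functional in logarithmic form, I obtain
\begin{equation*}
J(tv_0)=\tfrac{t^2}{2}\int_\Omega\bigl(|\nabla v_0|^2+(1+Q)v_0^2\bigr)-\tfrac{t^2\log(t^2)}{2}\int_\Omega Qv_0^2-\tfrac{t^2}{2}\int_\Omega Qv_0^2\log v_0^2,\,\,\,t>0.
\end{equation*}
Since $\int_\Omega Qv_0^2>0$ by $(Q_1)$ and $v_0\not\equiv 0$, the middle term is of order $-t^2\log t^2$ and dominates the two $O(t^2)$ contributions as $t\to\infty$, forcing $J(tv_0)\to -\infty$. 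Choosing $t_0>0$ large enough that both $||t_0v_0||_Z>r$ and $J(t_0v_0)\leq 0$ hold, the function $v:=t_0v_0$ satisfies $ii)$, finishing the plan.
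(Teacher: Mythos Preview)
Your proof is correct and follows essentially the same approach as the paper: for $i)$ both you and the authors bound $J$ from below by $C\|u\|_{H^1(\Omega)}^2+C\int_\Omega F_1(u)-C_1\|u\|^p$, invoke the modular inequality $\int_\Omega F_1(u)\geq\|u\|_{L^{F_1}}^2$ for small norm (via Proposition~\ref{4} and \eqref{In}) to recover the full $\|u\|_Z^2$, and conclude; for $ii)$ both expand $J(tu)$ using $F_2-F_1=\tfrac12 s^2\log s^2$ and observe the $-t^2\log t^2\int_\Omega Qu^2$ term drives the functional to $-\infty$. Your treatment is slightly more explicit (noting the cone property for the Sobolev embedding, and choosing $v_0\in C_0^\infty(\Omega)$ to sidestep any integrability concern), but there is no substantive difference in strategy.
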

 	\begin{proof}
 	$i)$: From the conditions $(Q_1)-(Q_2)$ it follows that, for some constant $C>0$, it holds
 	$$J(u)\geq C||u||^2_{H^1(\Omega)}+C\int_{\Omega}F_1(u)-Q_0\int_{\Omega}F_2(u).
 	$$
 	Using (\ref{2}) and $(P_2)$, modifying the constant $C$ if necessary, we can find $r\approx0^+$ such that, for $||u||_Z=r$, is valid that
 	$$
 	J(u)\geq C||u||^2_{H^1(\Omega)}+C||u||_{L^{F_1}(\Omega)}^2-C_1||u||_Z^p \geq C_2||u||_Z^2-C_1||u||_Z^p
 	$$
 	with $C_1$, $C_2>0$ and $p>2$. The property required in the item $i)$ follows as a direct consequence of the last inequality.\\
 	$ii)$: Fix $u\in Z-\{0\}$. So,
 	$$J(tu)=\frac{t^2}{2}\left[\int_{\Omega}(|\nabla u|^2+|u|^2)-\frac{1}{2}\int_{\Omega}Q(x)u^2\log u^2-\log t\int_{\Omega}Q(x)u^2\right]\longrightarrow -\infty,$$
 	as $t\rightarrow \infty$. So, the item $ii)$ holds by taking $v=tu$, for some $t\approx \infty$.
	\end{proof} 

	We are going to show that the problem $(S_0)$ has a ground state solution. To begin with, we will show the existence of a $(C)_{d}$-sequence at mountain pass level. Namely, we have the following corollary. 
	
	\begin{corollary}\label{063}
		The functional $J$ has a sequence $(C)_{\tilde{l}_0}$-sequence, where $\tilde{l}_0$ is the level
		$$\tilde{l}_0:=\inf_{\gamma \in \Gamma} \sup_{t\in [0,1]} J(\gamma(t)),$$
		and
		$$\Gamma:=\{\gamma \in C([0,1],Z);\,\gamma(0)=0,\,\,\gamma(1)<0\}.$$ 
	\end{corollary}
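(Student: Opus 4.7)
The plan is to obtain the Cerami sequence directly from the Mountain Pass geometry established in the previous lemma. First, I note that the preceding lemma guarantees exactly the two geometric ingredients required by a Mountain Pass style minimax argument for $J$, namely $J(0)=0$, the existence of $r,\rho_0>0$ with $J|_{\partial B_r(0)}\geq \rho_0$, and a function $v\in Z$ with $\|v\|_Z>r$ and $J(v)\leq 0$. In particular the class $\Gamma$ is nonempty (the straight line path $t\mapsto tv$ belongs to $\Gamma$) and one has the standard crossing estimate $\tilde{l}_0\geq \rho_0>0$, so that $\tilde{l}_0$ is a well-defined finite positive level.

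Next I would invoke a Cerami version of the Mountain Pass Theorem, i.e.\ the variant of the minimax principle that strengthens the Palais--Smale conclusion to a Cerami conclusion; a convenient reference is the general deformation lemma in Willem's book combined with the Ekeland variational principle applied to the metric space $\Gamma$ endowed with the usual $C([0,1],Z)$-distance. Concretely, the argument is the standard one: take a minimizing sequence of paths $(\gamma_n)\subset \Gamma$ with $\max_{t\in[0,1]} J(\gamma_n(t))\to \tilde{l}_0$; apply Ekeland's principle to the functional $\gamma\mapsto \max_{t\in[0,1]} J(\gamma(t))$ on $\Gamma$ to produce a $(C)_{\tilde{l}_0}$-sequence $(v_n)\subset Z$ satisfying
\[
J(v_n)\to \tilde{l}_0, \qquad (1+\|v_n\|_Z)\,\|J'(v_n)\|_{Z'}\to 0.
\]
Since $J\in C^1(Z,\mathbb R)$, all hypotheses of this abstract statement are satisfied and the conclusion follows immediately.

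The only subtle point is that one must check that the Cerami version of the minimax lemma applies in the Banach space $(Z,\|\cdot\|_Z)$, which is a separable and reflexive space by the results of Section~2 together with Proposition~\ref{4}. Since Ekeland's principle holds in any complete metric space and the quantitative deformation lemma only requires $Z$ to be a Banach space with $J\in C^1(Z,\mathbb R)$, no further compactness or smoothness hypothesis on $Z$ is needed. Therefore the main obstacle is merely a bookkeeping one: ensuring that the enhanced minimax principle is stated in a form compatible with our $C^1$ functional on the Banach space $Z$, which indeed holds. With these points in place the corollary follows without further computation.
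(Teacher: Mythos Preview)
Your proposal is correct and follows essentially the same approach as the paper: both simply invoke a Cerami-sequence variant of the Mountain Pass Theorem, using the geometry established in the preceding lemma. The paper cites Willem together with \cite{Cheng-Wu-Liu} for the abstract statement, while you sketch the Ekeland-on-paths mechanism behind it; the content is the same.
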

	\begin{proof}
	The result follows by a variant of the classical Mountain Pass Theorem of Ambrosetti-Rabinowitz (see, e.g., \cite[Section 2]{Willem}). Note that the reasoning made in \cite{Willem} can be adapted when the $(PS)_d$-sequences are replaced by $(C)_d$-sequences (see the Proposition 1.1 in \cite{Cheng-Wu-Liu} for a statement of a variant Mountain Pass Theorem involving the Cerami sequences).
	\end{proof}
 
 Exploring the ideas in \cite[Lemma 3.3]{Alves-de Morais}, in view of $(Q_1)$, we can show that the level $\tilde{l}_0$ in the above corollary coincides with the level $l_0$, namely, it holds
 	\begin{equation}\label{058}
 	\tilde{l}_0=l_0:=\inf_{u\in \mathcal{M}} J(u).
 	\end{equation}
 	Thereby, the last corollary assures the existence of a $(C)_{l_0}$-sequence for $J$. The next lemma is our main technical result in the present section, and it relates the levels $l_0$ and $l_\infty$.
 	
 	\begin{lemma}\label{065}
 		Assume the conditions $(Q_1)-(Q_2)$. Then the following inequality holds.
 		$$l_0<l_\infty.$$
 	\end{lemma}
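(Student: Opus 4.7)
The plan is to produce an explicit test function $u \in \mathcal{M}$ with $J(u) < l_\infty$, via Nehari projection of a translated ground state of $(S_\infty)$. On $\mathcal{M}$, the Nehari identity $J'(u)u=0$ immediately simplifies the functional to $J(u) = \tfrac{1}{2}\int_\Omega Q(x) u^2$ (and analogously $J_\infty(v) = \tfrac{Q_0}{2}\int_{\mathbb{R}^N} v^2$ on $\mathcal{M}_\infty$), so it suffices to find $u \in \mathcal{M}$ with $\int_\Omega Q u^2 < Q_0 \int_{\mathbb{R}^N} v_\infty^2$, where $v_\infty(x) = C_1 e^{-C_2|x|^2}$ is the explicit Gaussian ground state furnished by Theorem \ref{1}.

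For $|y|$ large, set $w_y := v_\infty(\cdot - y)|_\Omega$ and let $t_y > 0$ be the unique scalar with $t_y w_y \in \mathcal{M}$. From the Nehari equation and the Euler--Lagrange identity satisfied by $v_\infty$ in $\mathbb{R}^N$, I will express
\[
\Bigl(\int_\Omega Q w_y^2\Bigr)\log t_y^2 = \int_{\mathbb{R}^N}(Q_0 - Q(x+y))\, v_\infty(x)^2 \log v_\infty(x)^2\, dx + R_1(y),
\]
and
\[
Q_0\int_{\mathbb{R}^N} v_\infty^2 - \int_\Omega Q w_y^2 = \int_{\mathbb{R}^N}(Q_0 - Q(x+y))\, v_\infty(x)^2\, dx + Q_0\int_{\Omega^c} v_\infty(\cdot - y)^2\, dx + R_2(y),
\]
where $R_1(y), R_2(y)$ are remainders supported essentially on the bounded set $\Omega^c$ and controlled by $e^{-2C_2|y|^2}$ up to polynomial factors in $|y|$. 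Expanding $t_y^2 = 1 + \log t_y^2 + O((\log t_y^2)^2)$ in $J(t_y w_y) = \tfrac{1}{2}\, t_y^2 \int_\Omega Q w_y^2$ reduces the inequality $J(t_y w_y) < l_\infty$ to showing that the combination of the displayed quantities has a definite sign.

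To estimate the potential-defect integrals $\int(Q_0 - Q(x+y))\, v_\infty^2 (\log v_\infty^2)^k\, dx$, I will use hypothesis $(Q_2)$ together with the Gaussian completion identity
\[
\int_{\mathbb{R}^N} e^{-M|x+y|^2 - 2C_2|x|^2}\, dx = \Bigl(\frac{\pi}{M+2C_2}\Bigr)^{N/2} e^{-\frac{2M C_2}{M+2C_2}|y|^2},
\]
yielding a decay rate $e^{-\alpha(M)|y|^2}$ with $\alpha(M) = 2MC_2/(M+2C_2)\to 2C_2$ as $M\to\infty$. Taking $M_0$ large makes $\alpha(M)$ sufficiently close to $2C_2$ that the potential-defect terms are of comparable size to the boundary remainders, while the strictly positive quantity $Q_0\int_{\Omega^c} v_\infty(\cdot - y)^2\, dx$ provides the decisive contribution that drives $t_y^2 \int_\Omega Q w_y^2 - Q_0 \int_{\mathbb{R}^N} v_\infty^2$ below zero. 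The main obstacle is the sign bookkeeping: since $v_\infty^2 \log v_\infty^2$ changes sign in $\mathbb{R}^N$ and since the defect $Q_0 - Q(\cdot + y)$ is concentrated near $x = -y$, where $v_\infty^2 \log v_\infty^2$ takes a large negative value of order $-|y|^2 e^{-2C_2|y|^2}$, I must verify that this favorable sign survives after being combined with the other remainders; the choice $M \geq M_0$ with $M_0$ large is precisely what guarantees that this balance tips correctly, giving $l_0 \leq J(t_y w_y) < l_\infty$ for $|y|$ sufficiently large.
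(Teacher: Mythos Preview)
Your route---using the Nehari reduction $J(u)=\tfrac12\int_\Omega Qu^2$ on $\mathcal M$ and then expanding $t_y^2$---is different from the paper's, and the sketch as written misidentifies the dominant term.

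You declare $Q_0\int_{\Omega^c}v_\infty(\cdot-y)^2\,dx$ to be the decisive contribution, but this quantity is only of order $e^{-2C_2|y|^2}$. The Nehari equation for $t_y$ contains the boundary pieces
\[
-\int_{\Omega^c}\bigl(|\nabla v_\infty(\cdot-y)|^2+v_\infty(\cdot-y)^2\bigr)
\quad\text{and}\quad
\int_{\Omega^c}Q(x)\,v_\infty(x-y)^2\log v_\infty(x-y)^2\,dx,
\]
and because $|\nabla v_\infty(z)|^2=4C_2^2|z|^2v_\infty(z)^2$ and $\log v_\infty(z)^2\sim-2C_2|z|^2$, both are of order $|y|^2e^{-2C_2|y|^2}$ on the bounded set $\Omega^c$---one order larger than your ``decisive'' term. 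Worse, the Gaussian rate you compute for the potential-defect integrals satisfies $\alpha(M)=\tfrac{2MC_2}{M+2C_2}<2C_2$ for every finite $M$, so $e^{-\alpha(M)|y|^2}$ dominates $|y|^2e^{-2C_2|y|^2}$ as $|y|\to\infty$ regardless of how large $M_0$ is; ``comparable size'' is never attained. Since $(Q_2)$ gives only an \emph{upper} bound $0\le Q_0-Q(x)\le Ce^{-M|x|^2}$, you cannot pin down the actual magnitude or sign of the defect integral, and the balance you describe is not established.

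The paper avoids all of this by \emph{not} passing through the Nehari reduction. It writes
\[
J(t_nv_n)\;\le\;J_\infty(t_nv_n)-\tfrac{t_n^2}{2}A_n+B_n,
\qquad A_n=\int_{\Omega^c}\bigl(|\nabla v_n|^2+|v_n|^2\bigr),
\]
after discarding the nonnegative pieces $\int_{\Omega^c}Q_0\bigl[F_1(t_nv_n)+\tfrac{t_n^2}{2}v_n^2\bigr]$. The single strictly negative term $-\tfrac{t_n^2}{2}A_n$ carries the gradient contribution and needs only the crude lower bound $A_n\ge Ce^{-2C_2n^2}$. Every remaining correction $B_n$ involves only $F_2$, which satisfies $|F_2(t)|\le C|t|^p$ with $p>2$; this is the key point, because it upgrades the decay of the $\Omega^c$-piece of $B_n$ to $e^{-pC_2n^2}$ and lets the potential-defect piece be handled by the splitting $|x|\lessgtr n/k_0$ together with $M_0>2C_2k_0^{2}$. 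One then gets $B_n/A_n\to0$ and hence $l_0<l_\infty$. Your Nehari reduction destroys access to this $|t|^p$ decay: after the identity the only nonlinearity feeding into $\log t_y^2$ is $u^2\log u^2$, which decays no faster than $|t|^2\log|t|$. If you want to salvage your approach you must redo the hierarchy with the $|y|^2e^{-2C_2|y|^2}$ boundary terms as the leading favorable contribution (both happen to be negative) and show that the potential-defect term, whatever its size within the allowed range, cannot overturn them---this is considerably more delicate than your proposal indicates.
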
	
	\begin{proof}
		Set
		$$v_n(x):=v_\infty(x-x_n),$$
		with $x_n:=(n,0,...,0) \in \mathbb{R}^N$ and $v_\infty$ the solution of $(S_\infty)$ satisfying (\ref{056}). By (\ref{058}), 
		$$l_0\leq \max_{t\geq 0}\,J(tv_n)=:J(t_nv_n),$$
		and $t_n\in (0,\infty)$. In this way, we derive that $t_nv_n \in \mathcal{M}$, which yields
		$$t_n^2\int_{\Omega}(|\nabla v_n|^2+|v_n|^2)=\int_{\Omega}t_n^2|v_n|^2\log |t_nv_n|^2.$$
		Therefore, since $|x_n|\rightarrow \infty$, the same ideas employed in the proof of Lemma \ref{040} enable us to show that, going to a subsequence if necessary, it holds $t_n\rightarrow 1$. 
		
		Now, it follows that
		$$\begin{aligned}
		l_0\leq J(t_nv_n)&=\frac{1}{2}\int_{\Omega}(|t_n\nabla v_n|^2+(1+Q(x))|t_nv_n|^2)+\int_{\Omega}Q(x)F_1(t_nv_n)-\int_{\Omega}Q(x)F_2(t_nv_n)=\\
		&=J_\infty(t_nv_n)-\frac{t_n^2}{2}A_n+\int_{\Omega^c}Q_0F_2(t_nv_n)-\int_{\Omega^c}Q_0\left[F_1(t_nv_n)+\frac{t_n^2}{2}v_n^2\right]+\\
		&\,\,\,+\int_{\Omega}(Q_0-Q(x))\left[F_2(t_nv_n)-F_1(t_nv_n)-\frac{t_n^2}{2}v_n^2\right],
		\end{aligned}
		$$
		with $A_n:=\displaystyle{\int_{\Omega^c}}(|\nabla v_n|^2+|v_n|^2)$. From $(Q_1)$, 
		$$l_0\leq J_\infty(t_nv_n)-\frac{t_n^2}{2}A_n+\int_{\Omega^c}Q_0F_2(t_nv_n)+\int_{\Omega}(Q_0-Q(x))F_2(t_nv_n).$$
		Taking into account that $t_n\rightarrow 1$ as $|x_n|\rightarrow \infty$, the condition $(Q_1)$ and the invariance by translations of $\mathbb{R}^N$, one finds
		$$J_\infty(t_nv_n)=J_\infty(v_\infty)+o_n(1)=c_\infty+o_n(1).$$ 
		This information together with the last inequality give
		\begin{equation}\label{059}
		l_0\leq l_\infty+o_n(1)-\frac{t_n^2}{2}A_n+B_n,
		\end{equation}
		with $B_n:=\displaystyle{\int_{\Omega^c}}Q_0F_2(t_nv_n)+\displaystyle{\int_{\Omega}}(Q_0-Q(x))F_2(t_nv_n)$.
		
		Our next step is proving that $\dfrac{B_n}{A_n}\rightarrow 0$. Having this in mind, since $|\Omega^c|< \infty$, the equality in (\ref{055}) implies
		\begin{equation}\label{060}
			A_n\geq \int_{\Omega^c} |v_n|^2\geq Ce^{-2C_2n^2}, \quad \forall n \in \mathbb{N},
		\end{equation}
		for a convenient $C>0$. 
		From the condition $(P_2)$, for some $p\in (2,2^*]$, it holds 
		$$|F_2(t)|\leq C_p|t|^p,\,\,\,\forall t\in \mathbb{R}.$$
		Therefore, using again $|\Omega^c|<\infty$, one has 
		\begin{equation}\label{066}
		Q_0\int_{\Omega^c}F_2(t_nv_n)\leq Ce^{-pC_2n^2},
		\end{equation}
		for some $C$. Now, take $R_n\in(0,n)$. So,
		$$\int_{\Omega}(Q_0-Q(x))F_2(t_nv_n)=\int_{\Omega\cap [|x|>R_n]}(Q_0-Q)F_2(t_nv_n)+\int_{\Omega\cap [|x|\leq R_n]}F_2(t_nv_n).$$
		By invoking the assumption $(Q_2)$, it follows that
		\begin{equation}\label{061}
			\int_{\Omega\cap [|x|>R_n]}(Q_0-Q(x))F_2(t_nv_n)\leq Ce^{-MR_n^2},
		\end{equation}
		for some $C>0$, as well as,
		\begin{equation}\label{062}
		\int_{\Omega\cap [|x|\leq R_n]}(Q_0-Q(x))F_2(t_nv_n)\leq C_Nn^Ne^{-pC_2(n-R_n)^2}.
		\end{equation}
		for some constant $C_N>0$. The estimates in (\ref{060})-(\ref{062}) combined produce, for some constant $C>0$,
		$$
		\frac{B_n}{A_n}\leq C\left(\frac{e^{2C_2n^2}}{e^{pC_2n^2}}+ \frac{e^{2C_2n^2}}{e^{MR_n^2}} + \frac{C_Nn^Ne^{2C_2n^2}}{e^{pC_2(n-R_n)^2}} \right).
		$$
		Setting $R_n:=\dfrac{n}{k}$, $k\in \mathbb{N}$, we find
		$$\frac{C_Nn^Ne^{2C_2n^2}}{e^{pC_2(n-R_n)^2}}=\frac{C_Nn^Ne^{2C_2n^2}}{e^{(\frac{k-1}{k})^2pC_2n^2}}.$$
		Since $\left(\dfrac{k-1}{k}\right)^2$ converges to $1$, as $k\rightarrow \infty$, and $p>2$, we may fix $k_0\approx \infty$ such that $p\left(\frac{k_0}{k_0-1}\right)^2>2$. Hence
		$$\frac{C_Nn^Ne^{2C_2n^2}}{e^{(\frac{k_0-1}{k_0})^2pC_2n^2}} \,\,\,\longrightarrow \,\,\,0.$$
		Then, choosing $M_0$ large enough in the condition $(Q_2)$, we derive that
		$$\frac{e^{2C_2n^2}}{e^{MR_n^2}}=\frac{e^{2C_2n^2}}{e^{(M/k_0^2)n^2}}\longrightarrow 0.$$
		These convergences assure that
		$$\frac{B_n}{A_n} \longrightarrow 0.$$
	Recalling that $t_n \rightarrow 1$ for some $n_0 \in \mathbb{N}$,
		$$ -\frac{t_n^2}{2}A_n+B_n=\left(\frac{-t_n^2}{2}+\frac{B_n}{A_n}\right)A_n<0,\,\,\,n\geq n_0.
		$$
		Using this information in (\ref{059}), we derive that
		$$l_0<l_\infty,$$
		proving the desired result.
	\end{proof}
	
	Now we can prove our main result.

	\begin{proof}[Proof of Theorem \ref{068}]
		The proof is essentially established. In fact, by combining the Corollary \ref{063} with  (\ref{058}), there exists a $(C)_{l_0}$-sequence for $J$, which will be denotes by $(v_n)$. Since $(v_n)$ is bounded, it follows that 
		$$J(v_n)\longrightarrow l_0\,\,\,\text{and}\,\,\,J'(v_n)\longrightarrow 0.$$
		Invoking together the Corollaries \ref{064} and \ref{065}, we may assume that
		$$v_n\longrightarrow v_0\,\,\,\text{in}\,\,\,Z,$$
		for some $v_0$. In this way, we derive that 
		$$J(v_0)=l_0\,\,\,\text{and}\,\,\,J'(v_0) = 0,$$
		and so, $v_0$ is a ground state solution for $(S_0)$. Now, we would like to point out that $v_0$ can be chosen as a positive solution. Indeed, writing $v_0=v_0^+ - v_0^-$, with $v_0^+:=\max\{v_0,0\}$ and $v_0^-:=\max\{-v_0,0\}$, we find $J'(v_0^+)v_0^+=J'(v_0^-)v_0^-=0$ and $l_0=J(v_0)=J(v_0^+)+J(v_0^-)$. These facts combined assure that either $v_0^+=0$ or $v_0^-=0$. Hence, since $f(t)=t\log t$ is an odd function, we may assume that $v_0\geq0$, so that $v_0>0$ by a variant of maximum principle presented in \cite{Vazquez} (see \cite{Alves-de Morais,Alves-Ji, Alves-S da Silva} for a similar reasoning)
		\end{proof}

	
\end{document}